\documentclass [12pt] {amsart}
\usepackage{times}
\usepackage{latexsym,amssymb, amsmath, xcolor}
\usepackage{bm}
\usepackage{setspace}
\usepackage{geometry}
\usepackage{amsmath, amsfonts, amssymb, amsthm}
\newtheorem{thm}{Theorem}
\newtheorem{cor}[thm]{Corollary}
\newtheorem{lemma}[thm]{Lemma}
\newtheorem{remark}[thm]{Remark}

\newtheorem{example}{Example}

\numberwithin{thm}{section}

\DeclareMathOperator{\Ric}{Ric}

\DeclareMathOperator{\diver}{div}

\begin{document}
	
	\title[Local radial rigidity]{On a Local Radial Rigidity Phenomenon of  Elliptic Systems on Riemannian Manifolds}
	
	\author{Yifei Pan}
	\address{Department of Mathematical Sciences\\
		Purdue University Fort Wayne\\
		Fort Wayne, Indiana 46805}
	\email{pan1@pfw.edu}

	\author{ Yu Yan}
	\address {Department of Mathematics and Computer Science\\ Biola University\\La Mirada, California 90639}
	\email {yu.yan@biola.edu}

	\author{Yuan Zhang}
\address{Department of Mathematical Sciences\\
	Purdue University Fort Wayne\\
	Fort Wayne, Indiana 46805}
\email{zhan1313@pfw.edu}

\begin{abstract}
 
In this paper, we study local radial rigidity phenomena for a broad class of general-order elliptic systems on Riemannian manifolds via a reduction to   singular ordinary differential equations of Euler type. These rigidity properties have natural applications to several geometric problems, including prescribed curvature problems of Yamabe and Paneitz type in conformal geometry, higher-order analogues arising from harmonic maps, and Calabi-type problems on K\"ahler manifolds. The method applies also to higher-order nonlinear Poisson systems and, more generally, to weighted divergence-form elliptic systems, yielding local uniqueness and existence results for solutions with prescribed initial jets at the singularity. 

\end{abstract}

\renewcommand{\thefootnote}{\fnsymbol{footnote}}
\footnotetext{
\noindent
2020 Mathematics Subject Classification. Primary 35J30; Secondary 34A12, 35A02, 35R01. \\
Key words and phrases. Poisson system, Euler-type singularity, rigidity, radial symmetry, Yamabe problem, harmonic maps, Calabi-Yau equation.
}

\maketitle

\section{Introduction}
 Radial symmetry has played a central role in the qualitative theory of elliptic equations since the celebrated work of Gidas, Ni, and Nirenberg \cite{GNN}. They proved that for any Lipschitz continuous function $f$, every positive solution $u \in C^2 (\overline{B_R})$ to the Dirichlet problem
\begin{equation*}
	\left \{
	\begin{aligned}
		&	\Delta u = f(u) \hspace{.2in} &&\text{on } \,\, B_R; \\
			&	u  > 0 \hspace{.42in} &&\text{on } \,\,  B_R; \\
		&	u  = 0 \hspace{.42in} &&\text{on } \,\, \partial B_R
		\end{aligned} \right.
	\end{equation*}
	is necessarily radially symmetric. This symmetry is a global consequence of the boundary condition and positivity, obtained via the maximum principle and the method of moving planes.

On a smooth Riemannian manifold $(M, g)$, the Euclidean notion of radial symmetry is naturally formulated in terms of geodesic distance.  More specifically, a function $u$ defined on a normal neighborhood $ U_p$ of a point ${p}\in M$ is said to be  geodesically radially symmetric in $  U_p$ about $p$ if it depends only on the geodesic distance $r=d_g(\cdot, {p})$ from ${p}$.  In this paper, we study another  aspect of local radial symmetry for functions satisfying a class of  higher-order elliptic systems on $(M, g)$. Rather than deriving radiality from global hypotheses, we investigate whether radial symmetry  can enforce rigidity near the center.

Our main result establishes such a local radial rigidity phenomenon for functions satisfying certain higher-order Poisson-type differential inequalities.

\begin{thm}
	\label{thm:manifold_tri}
	Let $(M, g)$ be a Riemannian manifold, $p \in M$, and $m \in \mathbb{Z}^+$.   Suppose $u = (u_1, \dots, u_{\nu})  $ is a $C^{2m}$ vector-valued function defined on a normal neighborhood $U_p$ of $p$  satisfying  the differential inequality
	\begin{equation}
		\label{eqn:Delta_g^m_inequality}
		|\Delta_g^m u| \leq C \sum_{j=0}^{2m-1} |\nabla_g^{j} u| \quad \text{on } U_p
	\end{equation}
	for some constant $C > 0$. If $u$ is    geodesically radially symmetric in $U_p$ about ${p}$, then exactly one of the following holds:
	\begin{itemize}
		\item There exists some $j_0 \in \{0, \dots, m-1\}$ such that $\partial_r^{2j_0} u(p) \neq 0$; 
		\item $u \equiv 0$ on $U_p$.
	\end{itemize}
	\end{thm}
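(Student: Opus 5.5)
The plan is to reduce the inequality to a singular system of ordinary differential inequalities of Euler type in the radial variable $r\in(0,R)$, and then show that vanishing of the even-order jets forces $u\equiv 0$ by a Grönwall-type argument. The two alternatives are clearly exclusive, since $u\equiv 0$ kills every jet. So it suffices to assume $\partial_r^{2j}u(p)=0$ for all $0\le j\le m-1$ and deduce $u\equiv0$.

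\emph{Step 1: radial reduction.} Write $u=u(r)$ in geodesic polar coordinates. Then
\[
\Delta_g u = u'' + \frac{\partial_r\sqrt{\det g}}{\sqrt{\det g}}\,u' = u'' + \Big(\frac{n-1}{r}+ h(r)\Big)u',
\]
where $h$ is smooth near $0$ and $h(r)=O(r)$. Iterating, $\Delta_g^m u = L_m u$ is an $m$-fold composition of second-order Euler-type operators with smooth lower-order perturbations. For radial $u$ we also have $|\nabla_g^j u|\le C\sum_{k\le j} r^{-(j-k)}|u^{(k)}|$, because the Hessian terms carry coefficients like $u'/r$. The inequality \eqref{eqn:Delta_g^m_inequality} therefore becomes a singular ODE inequality
\[
|L_m u|\le C\sum_{k=0}^{2m-1} r^{k-2m}|u^{(k)}|\quad\text{on } (0,R).
\]

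\emph{Step 2: use the jets.} Since $u\in C^{2m}$ is radial, its odd derivatives vanish at $r=0$. I would check this by smoothness along geodesics through $p$, using $u(\gamma(t))=u(|t|)$. Combined with the hypothesis on even derivatives, this gives $u^{(k)}(0)=0$ for $k\le 2m-1$. Taylor's theorem then yields $|u^{(k)}(r)|=o(r^{2m-k})$. This is exactly what is needed so that the weighted quantities $r^{k-2m}|u^{(k)}|$ are bounded and tend to zero.

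\emph{Step 3: integrate the Euler operators.} Each factor $\partial_r^2+\frac{n-1}{r}\partial_r$ can be inverted on functions with the above decay as $v\mapsto\int_0^r s^{1-n}\int_0^s t^{n-1}v\,dt\,ds$. The perturbation $h$ is absorbed for small $r$. Set $\Phi(r)=\sup_{0<s\le r}\sum_{k\le 2m-1} s^{k-2m}|u^{(k)}(s)|$. Integrating $m$ times should give $\Phi(r)\le C'\!\int_0^r \Phi(s)\,\frac{ds}{s}$ or a similar estimate.

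This step is the main obstacle. The Euler scaling makes the kernel $ds/s$ non-integrable, so a naive Grönwall argument fails. The constant $C$ is not small, so one cannot simply absorb the right-hand side. What I would try first is an Agmon/Carleman-type weight. Consider $r^{-N}u$ for large $N$, equivalently sup norms with the weight $r^{-N}$, and use the fact that on $r^{N}$ the Euler operator acts with indicial factors of size about $N^2$. Inverting then gains a factor $1/N^{2}$ per factor, so the lower-order terms become small relative to $N^{2m}$. This forces $u=O(r^N)$ for every $N$, and the estimate $\|r^{-N}u\|\le \tfrac12\|r^{-N}u\|$ on a small ball yields $u\equiv0$ near $p$. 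To justify starting the weighted norm finite, one must bootstrap from $o(r^{2m})$ to higher orders step by step. Finally, $u\equiv0$ on a small ball propagates to all of $U_p$ by standard uniqueness for the ODE system on $(r_0,R)$, where it is regular and non-singular.
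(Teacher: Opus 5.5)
There is a genuine gap, and it is introduced in Step 1 rather than Step 3. The hypothesis involves only $\nabla_g^j u$ with $j\le 2m-1$. So your own bound $|\nabla_g^j u|\le C\sum_{k\le j} r^{k-j}|u^{(k)}|$ actually gives
\[
\sum_{j=0}^{2m-1}|\nabla_g^j u|\le C_1\sum_{k=0}^{2m-1}\frac{|u^{(k)}(r)|}{r^{2m-1-k}} = C_1\, r\sum_{k=0}^{2m-1}\frac{|u^{(k)}(r)|}{r^{2m-k}},
\]
which is one full power of $r$ better than Euler scaling. Likewise, the curvature corrections in $\Delta_g^m$ on radial functions have the form $E_{m,k}(r,\theta)\,u^{(k)}r^{k+2-2m}$, i.e.\ $O(r^2)$ times Euler-scaled terms.

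By writing the reduced inequality as $|L_m u|\le C\sum_k r^{k-2m}|u^{(k)}|$ with a non-small $C$, you discard exactly the information the theorem rests on. The weakened inequality does not imply $u\equiv 0$. For $q\ge m$, take $u=r^{2q}$ (e.g.\ $|x|^{2q}$ in $\mathbb R^n$). It has $u^{(k)}(0)=0$ for $k\le 2m-1$, and $|L_m u| = c_q r^{2q-2m}+O(r^{2q-2m+2})$ with $c_q=\prod_{i=0}^{m-1}(2q-2i)(2q-2i+n-2)$. So it satisfies your reduced inequality near $0$ for any $C>c_q$; the $k=0$ term alone suffices. It violates the original hypothesis only because $\sum_{j\le 2m-1}|\nabla_g^j u|=O(r^{2q-2m+1})$, i.e.\ precisely through the factor of $r$ you dropped.

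Consequently no argument in Step 3 can succeed from your reduced inequality. Your bootstrap from $O(r^{2m})$ to $O(r^N)$ must stall at orders $N$ for which $r^N$ itself satisfies the inequality. The weight $r^{-N}$ pits a principal indicial factor of size about $N^{2m}$ against lower-order weights of size about $N^{2m-1}$, so it beats $C$ only once $N$ is large compared with $C$, which you cannot reach. This is the phenomenon the paper notes around Theorem~\ref{thm:Pan_Wang_flatness}: Euler-type inequalities with large constants need infinite-order vanishing.

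Once the factor of $r$ is kept, the obstacle you identified disappears and no Carleman weight is needed. You get $|(\mathcal D_{n-1}\mathcal D_0)^m\phi|\le h(r)\sum_{k=0}^{2m-1} r^{k-2m}|\phi^{(k)}|$ with $h(r)=CC_1r+C_2r^2$, so the effective constant tends to $0$ at the singularity. Your Step 3 inversion then closes by plain absorption:

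1. Set $g=(\mathcal D_{n-1}\mathcal D_0)^m\phi$, which extends continuously to $0$ by the vanishing jets, and $M(r)=\max_{[0,r]}|g|$.
2. The iterated integral representation gives $|\phi^{(k)}(r)|\le C_k r^{2m-k}M(r)$. The boundary terms at $0$ vanish because the parameters $0$ and $n-1$ have real part $>-1$.
3. Hence $M(r_1)\le h(r_1)\bigl(\sum_k C_k\bigr)M(r_1)$, so $M\equiv 0$ for $r_1$ small.
4. Extension past $r_1$ is the non-singular ODE uniqueness you already mention.

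This is exactly the paper's route: Lemmas~\ref{lemma:covariant_bound} and \ref{lemma:Delta_g^m}, followed by Theorem~\ref{gron}.

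A minor correction in Step 2: Taylor's theorem gives only $|u^{(k)}(r)|=O(r^{2m-k})$, not $o(r^{2m-k})$, since $u^{(2m)}(0)$ is not assumed to vanish. Boundedness of the weighted quantities is all that is needed.
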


\noindent
Here $\mathbb{Z}^+$ denotes the set of positive integers, while $\nabla_g$ and $\Delta_g$ denote, respectively, the covariant derivative and the Laplace--Beltrami operator associated with $g$. Moreover, $\Delta_g^m$ denotes the $m$-fold iteration of $\Delta_g$, and $\nabla_g^j u$ denotes the $j$-th covariant derivative of $u$, viewed componentwise as a $(0,j)$-tensor field. The norm $|\nabla_g^j u|$ is defined as $\left(\sum_{i=1}^{\nu} |\nabla_g^j u_i|^2\right)^{\frac{1}{2}}$, where each norm $|\nabla_g^j u_i|$ is computed by contracting the tensor indices using the metric $g$. If $u$ is geodesically radially symmetric about $p$, so that $u=\phi(r)$ for a radial profile $\phi$, then $\partial_r^j u(p)$ is understood as $\phi^{(j)}(0)$.

 In contrast to classical results such as \cite{GNN},   our approach to the radial rigidity is entirely local. The proof of the dichotomy in Theorem \ref{thm:manifold_tri} proceeds by  reducing the problem to a uniqueness theorem (Theorem \ref{gron}) for a singular Euler-type ordinary differential inequality.  This method circumvents the use of global boundary conditions and sign assumptions, and extends naturally to higher-order operators.   The associated ordinary differential equations are analyzed in detail in Sections \ref{section:factor} -- \ref{section:iodea_proof}, where both existence and uniqueness are established.

Despite the linear format of the differential inequality \eqref{eqn:Delta_g^m_inequality}, Theorem \ref{thm:manifold_tri} is meant to be applied to differences of  solutions to nonlinear systems of the form $$ \Delta_g^m u   = f(\cdot, u, \nabla_g u, \dots, \nabla_g^{2m-1}u),$$ 
where the nonlinearity $f$ is sufficiently smooth in its arguments. The local existence results for such systems were established in   \cite{PZ, PY} for  arbitrary   admissible sets of jets at ${p}$ up to order $2m-1$. 
The abundance of such local solutions  naturally gives rise to  the question of whether additional structural assumptions could imply uniqueness.

Motivated by this perspective, we study pairs of local solutions whose difference is radially symmetric and analyze the resulting uniqueness consequences. As a direct consequence of Theorem \ref{thm:manifold_tri},   the following result shows that, when the difference of two solutions to  \eqref{eqn:Delta^m_IVP_Riem} is   radial, the corresponding radial germ is uniquely determined by its even-order  jets up to order $2m-2$ at the base point. In particular, radial symmetry, combined with  sufficiently high-order  contact at a point, imposes strong local constraints on solutions. To the best of our knowledge, this provides a new form of local uniqueness for higher-order elliptic equations/systems under minimal structural assumptions.

	\begin{thm}
		\label{thm:application_manifold_2m-1}
		Let $(M, g)$ be a Riemannian manifold, $p \in M$, and $m \in \mathbb{Z}^+$.   Suppose $u$ and $v$   are two $C^{2m}$ vector-valued functions defined on a normal neighborhood $U_p$ of $p$, each satisfying the nonlinear Poisson system
		\begin{equation}\label{eqn:Delta^m_IVP_Riem}
			\Delta_g^m u = f(\cdot, u, \nabla_g u, \dots, \nabla_g^{2m-1}u) \quad \text{on } U_p, 
		\end{equation}  
		where the vector-valued function $f$ is continuous in all its arguments and locally Lipschitz continuous in the variables $(u, \nabla_g u, \ldots, \nabla_g^{2m-1}u)$. If the difference $u - v$ is geodesically radially symmetric on $U_p$ about $p$, and the even-order covariant derivatives of $u$ and $v$ agree at $p$ up to order $2m-2$:
		\[
		\nabla_g^{2j} u(p) = \nabla_g^{2j} v(p) \qquad \text{for all } j=0,\ldots,m-1,
		\]
		then $u \equiv v$ on $U_p$.
	\end{thm}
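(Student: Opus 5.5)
We reduce Theorem \ref{thm:application_manifold_2m-1} to Theorem \ref{thm:manifold_tri} applied to the difference $w := u - v$. This is a $C^{2m}$ vector-valued function on $U_p$, and by hypothesis it is geodesically radially symmetric about $p$, say $w = \phi(r)$. The proof has two parts: show that $w$ satisfies the differential inequality \eqref{eqn:Delta_g^m_inequality}, and show that the even radial derivatives $\partial_r^{2j} w(p)$ vanish for $j = 0, \dots, m-1$. The dichotomy in Theorem \ref{thm:manifold_tri} then rules out the first alternative, so $w \equiv 0$, that is, $u \equiv v$.

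\textbf{The differential inequality.} Subtracting the two equations gives
\[
\Delta_g^m w = f(\cdot, u, \nabla_g u, \dots, \nabla_g^{2m-1} u) - f(\cdot, v, \nabla_g v, \dots, \nabla_g^{2m-1} v).
\]
Local Lipschitz continuity of $f$ in the jet variables gives a pointwise bound $|\Delta_g^m w| \le C \sum_{j=0}^{2m-1} |\nabla_g^j w|$. For this we need the jets of $u$ and $v$ up to order $2m-1$ to range over a compact set, so that a single Lipschitz constant applies.

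\begin{itemize}
\item To arrange this, we first shrink to a relatively compact normal ball $B \Subset U_p$. There $u$ and $v$ are $C^{2m}$ up to the closure, so their jets lie in a compact subset of the jet bundle, and $f$ has a uniform Lipschitz constant on a compact neighbourhood of that set.
\item Theorem \ref{thm:manifold_tri} then gives $w \equiv 0$ on every such ball.
\item Since $U_p$ is a normal neighbourhood, the radial profile $\phi$ is defined on an interval $[0, R)$. Vanishing on balls of every radius $< R$ gives $\phi \equiv 0$ on $[0, R)$, hence $w \equiv 0$ on all of $U_p$.
\end{itemize}

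The componentwise tensor norms are handled by the usual finite-dimensional norm equivalence, which only changes the constant.

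\textbf{Translating the jet condition.} This is the step I expect to need the most care: converting $\nabla_g^{2j} w(p) = 0$ into $\phi^{(2j)}(0) = 0$. In normal coordinates $x$ at $p$ we have $w(x) = \phi(|x|)$, and $w$ is $C^{2m}$.

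\begin{itemize}
\item Along a unit-speed geodesic $\gamma(t) = \exp_p(t\xi)$ we have $w(\gamma(t)) = \phi(|t|)$ for $t \in (-R, R)$. Hence $\phi^{(k)}(0) = \frac{d^k}{dt^k}\big|_{t=0} w(\gamma(t))$, and this one-sided derivative exists since $w$ is $C^{2m}$.
\item Because $\gamma$ is a geodesic, $\frac{d^k}{dt^k} w(\gamma(t)) = (\nabla_g^k w)(\dot\gamma, \dots, \dot\gamma)$; all correction terms involve $\nabla_{\dot\gamma}\dot\gamma = 0$.
\item Evaluating at $t = 0$ gives $\phi^{(2j)}(0) = \nabla_g^{2j} w(p)(\xi, \dots, \xi)$.
\end{itemize}

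Since $\nabla_g^{2j} w(p) = \nabla_g^{2j} u(p) - \nabla_g^{2j} v(p) = 0$ for $j \le m-1$, we get $\partial_r^{2j} w(p) = \phi^{(2j)}(0) = 0$ for all $j = 0, \dots, m-1$. This excludes the first alternative of Theorem \ref{thm:manifold_tri} and completes the argument.
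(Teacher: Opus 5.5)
Your proposal is correct and follows the paper's proof essentially step for step: you set $w=u-v$, use local Lipschitz continuity of $f$ on relatively compact subdomains to obtain \eqref{eqn:Delta_g^m_inequality} for $w$, invoke Theorem~\ref{thm:manifold_tri}, and exhaust $U_p$. Your geodesic identity $\phi^{(2j)}(0)=\nabla_g^{2j}w(p)(\theta,\dots,\theta)$ correctly justifies the translation from covariant jets to radial jets, which the paper only asserts.

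One step needs a small fix. In the paper a normal neighborhood is only the $\exp_p$-image of a bounded star-shaped set, so geodesic balls $B\Subset U_p$ need not have radii approaching $\sup_\theta r_\theta$. Exhausting by balls alone therefore covers $U_p$ only when $U_p$ is a geodesic ball. Instead, exhaust by relatively compact star-shaped normal subneighborhoods, for example $\exp_p$ of a small convex neighborhood of a segment $[0,t\theta]$ with $t<r_\theta$. This is what the paper's arbitrary $\tilde U_p\Subset U_p$ amounts to.
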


In the Euclidean setting,  Theorem \ref{thm:application_manifold_2m-1} immediately yields the following corollary for the standard Laplacian and gradient operators.

\begin{cor} \label{thm:application_f_x_u_nabla_u_2m-1}
	Let   $m, n \in \mathbb{Z}^+$ and $n\ge 2$. Suppose $u$ and $v$ are two $C^{2m}$ vector-valued functions on an open ball $B$  in $\mathbb R^n$  centered at the origin, each satisfying the nonlinear Poisson system
	\[
	\Delta^m u = f(\cdot, u, \nabla u, \dots, \nabla^{2m-1}u) \quad \text{in } B,
	\]
	where the vector-valued function $f$ is continuous in all its arguments and locally Lipschitz continuous in the variables $(u, \dots, \nabla^{2m-1}u)$. If the difference $u - v$ is radially symmetric in $B$ about the origin, and the even-order derivatives of $u$ and $v$ agree at $0$ up to order $2m-2$:
	\[
	\nabla^{2j} u(0) = \nabla^{2j} v(0) \qquad \text{for all } j = 0, \ldots, m-1,
	\]
	then $u \equiv v$ in $B$.
\end{cor}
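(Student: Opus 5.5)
The corollary is the special case of Theorem \ref{thm:application_manifold_2m-1} with $(M,g)=(\mathbb R^n,\delta)$, $p=0$ and $U_p=B$, so I will just check that every hypothesis and every notion carries over.

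First, the open ball $B$ centered at the origin is a normal neighborhood of $0$. Indeed, $\exp_0$ is the identity map of $\mathbb R^n = T_0\mathbb R^n$, and it maps the ball of the same radius in $T_0\mathbb R^n$ diffeomorphically onto $B$. The geodesic distance to $0$ is $d_g(x,0)=|x|$, so "geodesically radially symmetric about $0$" is the same as ordinary radial symmetry in $B$. Second, for the flat metric the Christoffel symbols vanish. Hence $\nabla_g^j u$ is the array of $j$-th partial derivatives $\nabla^j u$, its norm is the Euclidean (Frobenius) norm, and $\Delta_g=\Delta$, so $\Delta_g^m=\Delta^m$. The system in the corollary is therefore exactly \eqref{eqn:Delta^m_IVP_Riem}, with $f$ continuous and locally Lipschitz in the same variables, and the jet hypotheses $\nabla^{2j}u(0)=\nabla^{2j}v(0)$ for $j=0,\dots,m-1$ are exactly those of Theorem \ref{thm:application_manifold_2m-1}. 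That theorem gives $u\equiv v$ on $B$.

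The only point needing care is the role of $n\ge 2$. In dimension one, "radial" would mean even about the origin, and the Euler-type reduction behind Theorem \ref{thm:manifold_tri} uses the radial Laplacian $\phi''+\frac{n-1}{r}\phi'$ with $n\ge2$. Since Theorem \ref{thm:application_manifold_2m-1} is stated for a Riemannian manifold, I read it as implicitly assuming $\dim M\ge 2$, and the hypothesis $n\ge2$ here just matches that. I do not expect any real obstacle: the argument is pure specialization.
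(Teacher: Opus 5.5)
Your proposal is correct and follows the paper's own route: the paper presents the corollary as an immediate specialization of Theorem \ref{thm:application_manifold_2m-1} to $(\mathbb R^n,\delta)$ with $U_p=B$. The checks you list ($B$ is a normal neighborhood, $d_g=|x|$, $\nabla_g^j=\nabla^j$, $\Delta_g=\Delta$) are exactly the ones needed. Your side remark that $n\ge 2$ is needed for the Euler reduction is inaccurate but does not affect the argument: for $n=1$ the radial operator is $(\mathcal D_0\mathcal D_0)^m$, and this still satisfies \eqref{bjN}.
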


	As a byproduct of the singular ODE theory, we  generalize the Laplace operator to a class of weighted divergence-form elliptic operators defined near a singular point $0 \in \mathbb R^n $. These operators take the form
$$\Delta_{(c_1, c_2)} u := r^{-c_2} \diver \left( r^{c_2-c_1} \nabla (r^{c_1} u) \right),$$
where $(c_1, c_2) \in \mathbb{C}^2$, $u$ is a vector-valued  $ C^2 $ function near $0$, and $r = |x|$. 
For $m \in \mathbb{Z}^+$ and a parameter vector $(c_1, \ldots, c_{2m}) \in \mathbb{C}^{2m}$, we define the iterated operator as the composition $\Delta_{(c_{2m-1}, c_{2m})} \cdots \Delta_{(c_1, c_2)}.$ A detailed discussion of the properties of these operators, along with several illustrative examples, is provided in Section \ref{last}. Despite their seemingly  involved structure,   every singular  operator of the form $   \Delta^{m}      + \sum_{{  0\le j+2k\le 2m-1}} \frac{d_{j, k}}{r^{2m-2k}} (x\cdot \nabla)^j\Delta^k ,$  where $  d_{j, k} \in \mathbb C $, acts on  a $C^{2m}$ radial function  $u$ according to the identity 
        $$
\Delta^{m} u
+
\sum\nolimits_{{  0 \le j+2k \le 2m-1}}
\frac{d_{j,k}}{r^{2m-2k}} (x \cdot \nabla)^j \Delta^k u
=
\Delta_{(c_{2m-1}, c_{2m})} \cdots \Delta_{(c_1, c_2)} u 
$$
for suitable complex constants $c_1, \ldots, c_{2m}$. 
     
     By establishing the uniqueness of radial solutions to a  linear differential inequality involving the operator $\Delta_{(c_{2m-1}, c_{2m})} \cdots \Delta_{(c_1, c_2)}$ in Theorem \ref{lpdeg}, we obtain a rigidity phenomenon for nonlinear systems of the form
        	\begin{equation*}
				\Delta_{(c_{2m-1}, c_{2m})} \cdots \Delta_{(c_1, c_2)} u  = f(\cdot, u , \nabla u , \dots, \nabla^{2m-1} u )   
				\end{equation*}
 under suitable conditions on the parameters $(c_1,\dots, c_N)$. 
See    Corollary \ref{gthm:application_f_x_u_nabla_u_2m-1}. 

On the other hand, existence of radial solutions to the above nonlinear system  
is established in Theorem \ref{pe}, provided that the nonlinearity $f$ depends only on the  radial variables, namely  $ (r, u, \partial_r  u, \dots, \partial_r^{2m-1} u) $. In particular, the theorem yields $C^{2m}$ radial solutions to the above system with any properly prescribed initial values at the singularity.   See also Corollaries \ref{pel} and \ref{re2}. These results extend  readily to  general Riemannian manifolds,  and we leave the details to the  interested reader.

    \vspace{.2in}

	\section{Geometric applications}\label{geo}

       Owing in particular  to the general form of the differential system under consideration,  Theorem \ref{thm:application_manifold_2m-1} and Theorem \ref{gron}  have substantial applications to  a broad class of geometric elliptic problems and  yield rigidity phenomena for such problems.   In this section, we present several  examples in which our local rigidity results apply.   These examples   arise from geometric PDEs, including curvature prescription problems, variational problems such as harmonic maps, and fully nonlinear equations.

    \subsection{Yamabe  and Paneitz problems}
     
     Let $(M, g)$ be an $n$-dimensional Riemannian manifold with $n \geq 3$. A classic problem in geometric analysis is to find a conformal metric \[
    \tilde g=u^{\frac{4}{n-2}}g,\qquad u>0,
\]
whose scalar curvature is a prescribed function $K$ on $M$. The conformal
factor $u$ satisfies the Yamabe equation (\cite{LP})
	\begin{equation*}
		\Delta_g u = c_n R_g\, u - c_n K\, u^{\frac{n+2}{n-2}},
	\end{equation*}
	where $c_n = \frac{n-2}{4(n-1)}$ and $R_g$ is the scalar curvature of the background metric $g$.
	
    Because the exponent $\frac{n+2}{n-2}$ is strictly greater than $1$, the nonlinearity in the above equation is locally Lipschitz continuous with respect to $u$. This enables us to apply Theorem \ref{thm:application_manifold_2m-1} in the case $m=1$, which yields the following rigidity result 
    for osculating conformal deformations.
    
\begin{thm}\label{yamabe}
	Let $(M, g)$ be a Riemannian manifold of dimension $n \geq 3$. Suppose $\tilde{g}_1 = u^{\frac{4}{n-2}} g$ and $\tilde{g}_2 = v^{\frac{4}{n-2}} g$ are two conformal metrics defined on a normal neighborhood $U_p$ of a point $p\in M$ that both prescribe the same local scalar curvature $K$ on $U_p$. If the difference  $u - v$ of the two conformal factors is geodesically radially symmetric in $U_p$ about $p$  and $u(p) = v(p)$,
    then the conformal metrics are identical, i.e., $\tilde{g}_1 \equiv \tilde{g}_2$ in $U_p$. 
    
\end{thm}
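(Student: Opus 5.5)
Theorem \ref{yamabe} follows from Theorem \ref{thm:application_manifold_2m-1} with $m=1$, applied to the scalar ($\nu=1$) conformal factors $u$ and $v$. On the normal neighborhood $U_p$ set
\[
f(x,s) := c_n R_g(x)\, s - c_n K(x)\, |s|^{\frac{n+2}{n-2}},
\]
where $R_g$ and $K$ are continuous on $U_p$. Since $\tilde g_1$ and $\tilde g_2$ both have scalar curvature $K$, the positive functions $u$ and $v$ satisfy $\Delta_g u = f(\cdot,u)$ and $\Delta_g v = f(\cdot,v)$ on $U_p$. The absolute value in $f$ does nothing here, since $u,v>0$; it only makes $f$ defined for all real $s$. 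The nonlinearity $f$ does not depend on $\nabla_g u$, which is allowed in \eqref{eqn:Delta^m_IVP_Riem} with $2m-1=1$.

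We verify the hypotheses in turn.
\begin{itemize}
\item \emph{Regularity of the solutions.} I take the conformal factors to be $C^2$, as is implicit in prescribing scalar curvature. Then $u,v\in C^{2m}$ with $m=1$.
\item \emph{Continuity of $f$.} $f$ is continuous in $(x,s)$ because $R_g$ and $K$ are continuous.
\item \emph{Local Lipschitz continuity in $s$.} The exponent satisfies $q=\frac{n+2}{n-2}>1$. Hence $s\mapsto |s|^q$ is $C^1$ on $\mathbb R$ with derivative $q|s|^{q-2}s$, which is bounded on compact sets. On any compact $s$-range and compact subset of $U_p$, $f$ is therefore Lipschitz in $s$ with constant controlled by $c_n\sup|R_g| + c_n q\sup|K|\,\sup|s|^{q-1}$.
\item \emph{Radial difference.} $u-v$ is geodesically radially symmetric about $p$ by assumption.
\item \emph{Jet condition.} For $m=1$ only $j=0$ occurs, so the condition reduces to $u(p)=v(p)$, which is assumed.
\end{itemize}
Theorem \ref{thm:application_manifold_2m-1} then gives $u\equiv v$ on $U_p$, and so $\tilde g_1 = u^{\frac{4}{n-2}}g \equiv v^{\frac{4}{n-2}}g = \tilde g_2$.

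The main point needing care is that the Lipschitz estimate is only local. This is harmless because the proof of Theorem \ref{thm:application_manifold_2m-1} uses the Lipschitz bound only along the compact range of values taken by $u$ and $v$ on compact subsets of $U_p$. If that theorem is read as requiring a constant uniform on all of $U_p$, we can apply it on an exhaustion of $U_p$ by smaller geodesic balls about $p$, each of which is again a normal neighborhood. On each such ball $u$ and $v$ are bounded, so the local Lipschitz constant suffices there, and $u\equiv v$ on every ball, hence on $U_p$.

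Concretely, the mechanism is that $w=u-v$ satisfies $|\Delta_g w|\le C|w|$ on each such ball. This is the inequality \eqref{eqn:Delta_g^m_inequality} with $m=1$, and $w$ is radial. Since $w(p)=0$, Theorem \ref{thm:manifold_tri} forces $w\equiv0$.
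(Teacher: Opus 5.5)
Your proposal is correct and matches the paper's argument: the paper obtains this theorem by applying Theorem \ref{thm:application_manifold_2m-1} with $m=1$, the only check being that $\frac{n+2}{n-2}>1$ makes the nonlinearity locally Lipschitz in $u$. Your fallback exhaustion is not needed, because that theorem assumes only local Lipschitz continuity and its proof already passes to subdomains $\tilde U_p \Subset U_p$. Strictly speaking, geodesic balls about $p$ need not exhaust a non-round normal neighborhood, but radiality of $u-v$ lets you run the ODE argument along a single ray instead.
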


	 This phenomenon becomes more pronounced in higher-order conformal geometry, where maximum principles are typically unavailable. The conformal Paneitz operator  $P_g^4$  is a fourth-order conformally covariant differential operator that plays a central role in higher-order conformal geometry (see \cite{Paneitz, Bran, CY, Chang, GM, HY} and the references therein). Its leading term is the bi-Laplacian,  together with   lower-order curvature corrections:
	\begin{equation*}
		P_g^4 u = \Delta_g^2 u + \operatorname{div}_g \Big( a_n R_g\,\nabla_g u + b_n \operatorname{Ric}_g(\nabla_g u) \Big) + \frac{n-4}{2} Q_g\,u,
	\end{equation*}
	where $a_n$ and $b_n$ are dimensional constants, $Q_g$ is the $Q$-curvature of the metric, and $\operatorname{Ric}_g(\nabla_g u)$ denotes the vector field obtained by raising the index of the 1-form $\operatorname{Ric}_g(\nabla_g u, \cdot)$.
	For $n\neq 4$, under the conformal change
\[
    \tilde g=u^{\frac{4}{n-4}}g,\qquad u>0,
\]
the Paneitz operator satisfies the conformal covariance property
\[
    P_{\tilde g}^4(\psi)
    =
    u^{-\frac{n+4}{n-4}}\,P_g^4(u\psi)
\]
for every smooth function $\psi$.

    More generally, the GJMS   (Graham--Jenne--Mason--Sparling) operator $P_g^{2m}$ (\cite{GJMS}) is a higher-order generalization of the Paneitz operator for $m\in \mathbb Z^+$.  For a manifold of dimension $n > 2m$, it takes the form
	$$P_g^{2m} = \Delta_g^m + \text{lower-order geometric terms involving derivatives up to order } 2m-1.$$
Under the conformal change
\[
    \tilde g=u^{\frac{4}{n-2m}}g,
\]
the prescribed $2m$-th order $Q$-curvature equation, after possibly absorbing the dimensional constant into the $Q$-curvature $Q$, takes the form
\[
    P_g^{2m}u
    =
    Q \, u^{\frac{n+2m}{n-2m}}.
\]

Moving the lower-order geometric terms of $P_g^{2m}$ to the right-hand side,
the equation assumes the form of \eqref{eqn:Delta^m_IVP_Riem}. Since  the exponent satisfies
$    \frac{n+2m}{n-2m}>1,$
the nonlinear term is locally Lipschitz with respect to $u$. Therefore
Theorem~\ref{thm:application_manifold_2m-1} applies and leads to

\begin{thm}
	\label{paneitz}
	Let $(M, g)$ be a Riemannian manifold of dimension $n > 2m$, where $m\in \mathbb Z^+$. Suppose $\tilde{g}_1 = u^{\frac{4}{n-2m}} g$ and $\tilde{g}_2 = v^{\frac{4}{n-2m}} g$ are two conformal metrics  defined on a normal neighborhood $U_p$ of a point $p\in M$ that prescribe the same $Q$-curvature  $Q $  on $U_p$. If the difference $u-v$ is geodesically radially symmetric in $U_p$ about $p$ and  the even jets of $u$ and $v$ at ${p}$ agree up  to order $2m-2$,  then the two  metrics $\tilde{g}_1  $ and $\tilde{g}_2$ coincide in $U_p$.
\end{thm}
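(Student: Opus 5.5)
We reduce Theorem \ref{paneitz} to Theorem \ref{thm:application_manifold_2m-1}, applied to the scalar case $\nu=1$. The work is to put the prescribed $Q$-curvature equation into the form \eqref{eqn:Delta^m_IVP_Riem} and to check the regularity hypotheses on the resulting right-hand side.

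First, we write the GJMS operator as $P_g^{2m}u=\Delta_g^m u+L_g u$. Here $L_g$ is a linear differential operator of order at most $2m-1$ whose coefficients are built from the curvature of $g$ and its covariant derivatives, so they are smooth (in particular continuous) on $U_p$. We express $L_g u$ as a contraction of these coefficient tensors with $u,\nabla_g u,\dots,\nabla_g^{2m-1}u$. For $w\in\{u,v\}$ the equation $P_g^{2m}w=Q\,w^{\frac{n+2m}{n-2m}}$ then becomes
\[
\Delta_g^m w=f(\cdot,w,\nabla_g w,\dots,\nabla_g^{2m-1}w),
\]
with
\[
f(x,\xi_0,\xi_1,\dots,\xi_{2m-1})=Q(x)\,\xi_0^{\frac{n+2m}{n-2m}}-\sum_{j=0}^{2m-1}A_j(x)\cdot\xi_j ,
\]
where the $A_j$ are the coefficient tensors of $L_g$.

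Second, we check the hypotheses on $f$. Continuity in $x$ follows from continuity of $Q$ and of the $A_j$. The linear part is globally Lipschitz in the jet variables, uniformly on compact subsets of $U_p$. The power nonlinearity needs care, because $\xi_0^{\alpha}$ with $\alpha=\frac{n+2m}{n-2m}$ may not be defined or Lipschitz for negative $\xi_0$. Since $u,v>0$ are conformal factors, we replace $\xi_0^{\alpha}$ by $|\xi_0|^{\alpha}$. This agrees with the original term along both solutions, and since $\alpha>1$ the map $t\mapsto|t|^{\alpha}$ is $C^1$ on $\mathbb R$, hence locally Lipschitz. So $f$ satisfies the hypotheses of Theorem \ref{thm:application_manifold_2m-1}. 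This is the only delicate step. Its content is that the Lipschitz condition is local, and that $C^{2m}$ regularity of $u,v$ (needed in Theorem \ref{thm:application_manifold_2m-1}) holds for smooth metrics or is part of the standing assumptions.

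Third, we interpret the hypotheses. The assumption that $u-v$ is geodesically radial about $p$ is the same as in Theorem \ref{thm:application_manifold_2m-1}. We read "the even jets agree up to order $2m-2$" as $\nabla_g^{2j}u(p)=\nabla_g^{2j}v(p)$ for $j=0,\dots,m-1$. Theorem \ref{thm:application_manifold_2m-1} then gives $u\equiv v$ on $U_p$, hence $\tilde g_1=u^{\frac{4}{n-2m}}g=v^{\frac{4}{n-2m}}g=\tilde g_2$ on $U_p$.

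The jet condition could instead be read as agreement of the even radial derivatives $\partial_r^{2j}(u-v)(p)=0$. In that case we would apply Theorem \ref{thm:manifold_tri} directly to $w=u-v$. We would derive \eqref{eqn:Delta_g^m_inequality} on a smaller compact neighborhood from the Lipschitz bound $|f(\cdot,u,\dots)-f(\cdot,v,\dots)|\le C\sum_j|\nabla_g^j w|$, using that $u,v$ and their derivatives are bounded there. This shows $w\equiv0$ on that neighborhood. Since a radial function is determined by its profile, and the argument can be repeated on each smaller ball, we conclude $w\equiv0$ on all of $U_p$.
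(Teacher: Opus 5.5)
Your proposal is correct and follows the same route as the paper. You move the lower-order GJMS terms to the right-hand side to obtain a system of the form \eqref{eqn:Delta^m_IVP_Riem}, note that the power nonlinearity with exponent $\frac{n+2m}{n-2m}>1$ is locally Lipschitz, and invoke Theorem \ref{thm:application_manifold_2m-1}. Replacing $\xi_0^{\alpha}$ by $|\xi_0|^{\alpha}$ is a harmless refinement: it makes $f$ defined and locally Lipschitz on the whole jet space, a point the paper leaves implicit.
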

	
	These results impose a strong geometric constraint on bifurcation phenomena: if a radially symmetric conformal factor solves a prescribed curvature problem locally, then any distinct solution with the same initial data at the pole cannot remain radially symmetric. In particular, any bifurcation must arise through the breaking of local rotational symmetry.

\subsection{Harmonic maps} 

 Introduced by Eells and Sampson \cite{ES}, harmonic maps serve as a natural geometric counterpart to semilinear elliptic systems and have been extensively studied in both local and global settings. 
Let $(M, g)$ and $(N, h)$ be two Riemannian manifolds. A smooth map  $u:(M,g)\to (N,h)$ is called harmonic if it is a critical point  of the Dirichlet energy
\[
E(u)=\frac{1}{2}\int_M |\nabla u|^2\, dV_g.
\]
Consequently, $u$ satisfies the Euler--Lagrange equation $$\tau(u)=0,$$ where $\tau(u)$ is the tension field. In local coordinates on the target manifold $(N, h)$, this equation takes the form
\[
\Delta_g u^\alpha + \Gamma^\alpha_{\beta\gamma}(u)\,\langle \nabla_g u^\beta, \nabla_g u^\gamma\rangle_g = 0,
\]
where the functions $(u^\alpha)$ are the coordinate components of $u$, and \(\Gamma^\alpha_{\beta\gamma}\) are the Christoffel symbols of the Levi--Civita connection on \(N\), evaluated at \(u\). We use the Einstein summation convention, so repeated indices are summed over. This formulation exhibits a quasilinear elliptic structure with a quadratic nonlinearity in the gradient. See also \cite{Jost, EL} for background.

Higher-order analogues arise from higher-order variational energies. For example, biharmonic maps are critical points of
\[
E_2(u)=\frac{1}{2}\int_M |\tau(u)|^2\, dV_g,
\]
and satisfy the Euler--Lagrange equation
\[
\tau_2(u)=\Delta_g \tau(u) + \text{lower-order curvature terms} = 0,
\]
which can be viewed schematically as a fourth-order elliptic system with leading term $\Delta_g^2 u$.   More generally, polyharmonic maps of order $m$ satisfy equations of the form
\[
\Delta_g^m u = f(\cdot,u,\nabla_g u,\dots,\nabla_g^{2m-1}u),
\]
where the nonlinearity $f$ is determined by the geometry of the target manifold  and is smooth in all its arguments $(\cdot,u,\nabla_g u,\dots,\nabla_g^{2m-1}u) $. See, for instance, \cite{Jiang, MO, GS}.  These systems therefore fall within the framework developed in this paper.

To apply our results, we fix an isometric embedding $(N,h)\subset \mathbb{R}^{\nu}$ by Nash's theorem, and regard maps $u$ and $v$ as $\mathbb{R}^{\nu}$-valued functions. In this setting, the difference $u-v$ is well-defined, and radial symmetry is understood with respect to this fixed embedding. We note that, while radial symmetry of a single map is an  intrinsic notion, the radiality of $u-v$ is extrinsic, since it may depend on the chosen embedding  of $(N,h)$ into $\mathbb{R}^{\nu}$. 
Under these considerations, the local rigidity result stated below follows directly from Theorem \ref{thm:application_manifold_2m-1}.

   \begin{thm}
\label{harm}
Let $(M,g)$ be a Riemannian manifold and $(N,h)\subset \mathbb{R}^{\nu}$ an isometrically embedded Riemannian manifold. Let   $m \in \mathbb{Z}^+$. Suppose $u,v:U_p \to N$ are two polyharmonic maps of order $m$, where  $U_p$ is a normal neighborhood $U_p$ of a point $p\in M$.  If $u-v$ is geodesically radially symmetric in $U_p$ about $p$ and the even jets of $u$ and $v$ at $p$ agree up to order $2m-2$, then $u \equiv v$ in $U_p$.
\end{thm}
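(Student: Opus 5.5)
The plan is to reduce Theorem \ref{harm} directly to Theorem \ref{thm:application_manifold_2m-1}, applied to the $\mathbb{R}^{\nu}$-valued maps $u$ and $v$ obtained from the fixed isometric embedding $N\subset\mathbb{R}^{\nu}$. The only substantive work is to put the polyharmonic map equation into the form \eqref{eqn:Delta^m_IVP_Riem} with a right-hand side that is continuous and locally Lipschitz in the jet variables.

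\textbf{Step 1: the equation.} I would use the extrinsic formulation. For $m=1$, a map $u:U_p\to N\subset\mathbb{R}^{\nu}$ is harmonic if and only if
\[
\Delta_g u = -A(u)(\nabla_g u,\nabla_g u),
\]
where $A$ is the second fundamental form of $N$. For general $m$, the Euler--Lagrange equation of the polyharmonic energy can be written as
\[
\Delta_g^m u = f(\cdot,u,\nabla_g u,\dots,\nabla_g^{2m-1}u),
\]
where $f$ is polynomial in the derivatives $\nabla_g^j u$ for $j\ge 1$. The coefficients of this polynomial are smooth functions of $u$ (built from the nearest-point projection onto $N$, or from $A$ and its derivatives) and of the base point, through the metric $g$ and its curvature. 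This is the structure cited from \cite{Jiang, MO, GS}.

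The key point is that $f$ is defined and smooth on a neighborhood of $N\times(\text{jet space})$. After extending the coefficients smoothly to a tubular neighborhood of $N$, $f$ is smooth in all arguments, hence continuous and locally Lipschitz in $(u,\dots,\nabla_g^{2m-1}u)$. Moreover, both $u$ and $v$ take values in $N$, so they satisfy the same system with the same $f$. Smoothness of the maps gives the $C^{2m}$ regularity required.

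\textbf{Step 2: the hypotheses.} The difference $u-v$ is geodesically radially symmetric by assumption. The even jets agree, $\nabla_g^{2j}u(p)=\nabla_g^{2j}v(p)$ for $j=0,\dots,m-1$, componentwise in $\mathbb{R}^{\nu}$, which is how ``even jets agree up to order $2m-2$'' is to be read. Theorem \ref{thm:application_manifold_2m-1} then gives $u\equiv v$ on $U_p$.

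\textbf{Main obstacle.} The only delicate point is justifying that the polyharmonic map system has exactly the stated form: leading term $\Delta_g^m u$ with coefficient one, and no derivatives of order $2m$ hidden in the lower-order part. For the extrinsic (Gastel--Scheven/Wang type) polyharmonic energies, this follows because the Euler--Lagrange equation can be written as $\Delta_g^m u$ plus tangential-projection correction terms. Those corrections, after expansion, involve at most $2m-1$ derivatives of $u$. I would cite this structure rather than rederive it, and handle $m=1$ and $m=2$ explicitly as illustrations.
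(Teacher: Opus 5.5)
Your proposal is correct and is essentially the paper's argument. The paper also regards $u,v$ as $\mathbb{R}^{\nu}$-valued through the fixed isometric embedding. It takes from the polyharmonic-map literature that maps of order $m$ satisfy $\Delta_g^m u = f(\cdot,u,\nabla_g u,\dots,\nabla_g^{2m-1}u)$ with $f$ smooth in all arguments, and then invokes Theorem \ref{thm:application_manifold_2m-1} directly. Your additional remarks (extending the coefficients to a tubular neighborhood to obtain local Lipschitz continuity, and reading the jet condition componentwise in $\mathbb{R}^{\nu}$) only make explicit what the paper leaves implicit.
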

 
In the case   $m=1$, since   constant maps are  trivially   harmonic and radial, Theorem \ref{harm} implies that any locally geodesically radially symmetric harmonic map must also  be constant. 

 \begin{cor}
\label{harm1}
Let $(M,g)$ be a Riemannian manifold and $(N,h)\subset \mathbb{R}^{\nu}$ an isometrically embedded Riemannian manifold.  Let $u :U_p \to N$ be a harmonic map, where $U_p$ is a normal neighborhood of a point $p\in M$. If  $u$ is geodesically radially symmetric in $U_p$ about $p$, then $u$ is constant in $U_p$.
\end{cor}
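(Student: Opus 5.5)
We deduce Corollary \ref{harm1} from Theorem \ref{harm} with $m=1$, comparing $u$ with the constant map $v\equiv u(p)$.

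\textbf{Step 1: $v$ is an admissible competitor.} Since $u(p)\in N$, the constant map $v\colon U_p\to N$, $v\equiv u(p)$, takes values in $N$. Its tension field vanishes identically, so $v$ is a harmonic map, i.e.\ a polyharmonic map of order $m=1$.

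\textbf{Step 2: the hypotheses of Theorem \ref{harm} hold.}
\begin{itemize}
\item \emph{Radial difference.} By assumption $u$, viewed through the fixed embedding $N\subset\mathbb{R}^{\nu}$ as an $\mathbb{R}^{\nu}$-valued map, satisfies $u=\phi(r)$ on $U_p$. Hence
\[
u-v=\phi(r)-\phi(0)
\]
is geodesically radially symmetric about $p$.
\item \emph{Matching jets.} For $m=1$ the required even jets reduce to order $0$ only, namely $u(p)=v(p)$. This holds by construction.
\end{itemize}

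\textbf{Step 3: conclusion.} Theorem \ref{harm} gives $u\equiv v\equiv u(p)$ on $U_p$, so $u$ is constant.

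\textbf{Main point to check.} The only delicate issue is regularity. Harmonic maps are smooth, and even if only $C^2$ is assumed, $u$ has enough regularity for Theorem \ref{harm}, which uses $C^{2m}=C^2$.

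It is also worth recording the radial structure directly, since this is what makes Theorem \ref{thm:application_manifold_2m-1} apply. The harmonic map system is
\[
\Delta_g u^\alpha=-\Gamma^\alpha_{\beta\gamma}(u)\,\langle\nabla_g u^\beta,\nabla_g u^\gamma\rangle_g ,
\]
or the analogous ambient form $\Delta_g u=A(u)(\nabla u,\nabla u)$ with $A$ the second fundamental form. Its right-hand side $f$ is smooth, hence locally Lipschitz, in $(u,\nabla_g u)$. Thus the difference $w=u-v$ satisfies
\[
|\Delta_g w|\le C\big(|w|+|\nabla_g w|\big)
\]
on a slightly smaller neighborhood. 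Combined with $w(p)=0$, Theorem \ref{thm:manifold_tri} forces $w\equiv0$.

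To pass from a shrunk neighborhood to all of $U_p$, rely on Theorem \ref{harm} as stated. Alternatively, apply the local result on each smaller geodesic ball and exhaust $U_p$: the Lipschitz constant is uniform on compact subsets of any ball $B_\rho(p)\Subset U_p$.
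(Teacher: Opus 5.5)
Your proposal is correct and is exactly the paper's argument: the paper obtains Corollary~\ref{harm1} from Theorem~\ref{harm} with $m=1$, comparing $u$ with the constant (hence harmonic) map $v\equiv u(p)$. Then $u-v=\phi(r)-\phi(0)$ is geodesically radially symmetric, and the only jet condition, $u(p)=v(p)$, holds trivially. Your extra discussion of the ambient equation $\Delta_g u=A(u)(\nabla u,\nabla u)$ and of exhausting $U_p$ is not needed once you invoke Theorem~\ref{harm} as stated.
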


   \subsection{Calabi problem} The  Calabi--Yau equation is a fundamental example of geometric fully nonlinear elliptic equations  arising in K\"ahler geometry. It arises as the complex Monge--Amp\`ere equation associated with prescribing the Ricci form of a K\"ahler metric within a fixed K\"ahler class.
Calabi initiated this program by reducing the problem to an  equation of the form
 \begin{equation*}
(\omega+\sqrt{-1}\,\partial\bar\partial\varphi)^n=e^f\omega^n,
\end{equation*}
and formulated what is now known as the Calabi conjecture, predicting the existence and uniqueness of a K\"ahler metric in a given cohomology class with prescribed volume form, or equivalently, prescribed Ricci form. More precisely, if \((M,\omega)\) is a K\"ahler manifold of complex dimension \(n\) and \(f\) is a given smooth function, the Calabi--Yau equation seeks a K\"ahler potential \(\varphi\) satisfying
    \begin{equation}\label{calabi}
(\omega+\sqrt{-1}\,\partial\bar\partial\varphi)^n=e^f\omega^n, 
\qquad
\omega+\sqrt{-1}\,\partial\bar\partial\varphi>0.
\end{equation}
On compact K\"ahler manifolds, the global existence of solutions to this equation was established by Yau \cite{Yau1978} through  a priori estimates. Moreover, the global solution is unique up to an additive constant.

While this result provides a complete understanding in the global setting, the local behavior of solutions remains more flexible and less rigid. 
  Concerning local solutions to the Calabi--Yau equation,  since  jets of the potential $\varphi$ up to order one do  not yield distinct K\"ahler forms, we study the existence of local potentials    by prescribing  admissible second-order jets of the potential in Theorem \ref{cale}.     Consequently,  there exist    infinitely many K\"ahler forms  with the prescribed Ricci form in a neighborhood of a point. With regard to uniqueness,   classical  results  are typically global in nature and rely on the maximum principle or energy methods, often combined with compactness, boundary conditions, or suitable decay assumptions. 
  While Theorem \ref{thm:manifold_tri} does not directly apply to the fully nonlinear  equation,  the linearization of the equation allows us to apply the  singular ODE theory (Theorem \ref{gron})   to establish  a local rigidity result for K\"ahler forms with prescribed Ricci form under a radial symmetry assumption.  In summary, we obtain in Section \ref{cas} the following local existence and uniqueness  for the Calabi-Yau equation.
  
\begin{thm}\label{thm:local-kahler-form-rigidity}
	
	Let $(M,\omega)$ be a K\"ahler manifold and let $p\in M$.   
	
	1. There exist infinitely many K\"ahler forms in the same local K\"ahler class as $\omega$ with the prescribed Ricci form in a neighborhood of $p$.  
	
	2. If  $\omega_0$ and $\omega_1$ are two K\"ahler forms in the same local K\"ahler class as $\omega$ with the prescribed Ricci form in a normal neighborhood $U_p$ of $p$, where 
	$$ \omega_\ell=\omega+\sqrt{-1}\,\partial\bar\partial\varphi_\ell, \quad \ell=0,1, $$
	and if the potential difference $\varphi_1-\varphi_0 $ is geodesically radially symmetric in $U_p$ about $p$, then $\omega_0 \equiv \omega_1$ in $U_p$.

\end{thm}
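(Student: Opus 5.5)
Part 1 is an existence statement; I will deduce it from the local existence theory for fully nonlinear equations with prescribed jets. In the paper this is Theorem \ref{cale}, which prescribes admissible second-order jets. Work in holomorphic normal coordinates $z$ centered at $p$, so that $\omega_{i\bar j}(0)=\delta_{ij}$. Write the equation \eqref{calabi} as
\[
F(z,\partial\bar\partial\varphi)=\log\det(g_{i\bar j}+\varphi_{i\bar j})-\log\det g_{i\bar j}-f=0 .
\]
This is an elliptic equation near $\partial\bar\partial\varphi(0)=A$ whenever $I+A>0$. Prescribing $\varphi_{i\bar j}(0)=A_{i\bar j}$ requires the compatibility condition $\det(I+A)=e^{f(p)}$. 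This single scalar constraint on the positive Hermitian matrix $I+A$ leaves infinitely many admissible choices; for instance, take $I+A=\mathrm{diag}(\lambda_1,\dots,\lambda_n)$ with $\prod\lambda_i=e^{f(p)}$. For each such $A$, Theorem \ref{cale} gives a local potential $\varphi_A$, and shrinking the neighborhood keeps $\omega+\sqrt{-1}\partial\bar\partial\varphi_A>0$. Distinct $A$ give forms that differ at $p$, so the K\"ahler forms are genuinely distinct. Each form has Ricci form $\mathrm{Ric}(\omega)-\sqrt{-1}\partial\bar\partial f$, so the prescribed Ricci form is achieved.

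For Part 2, set $\psi=\varphi_1-\varphi_0$, which is radial in $U_p$. Subtract the two equations and linearize along the segment $\varphi_t=\varphi_0+t\psi$:
\[
0=\log\det(\omega_1)-\log\det(\omega_0)=\int_0^1 \frac{d}{dt}\log\det(g+\partial\bar\partial\varphi_t)\,dt = a^{i\bar j}(z)\,\psi_{i\bar j},
\]
where $a^{i\bar j}=\int_0^1 (g_{\varphi_t})^{i\bar j}\,dt$ is a continuous, positive definite Hermitian matrix. Positivity holds because each $g_{\varphi_t}$ is a convex combination of the positive forms $\omega_0,\omega_1$. Thus $\psi$ solves a linear second-order elliptic equation with continuous coefficients. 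In real terms this reads $L\psi=a^{kl}\partial_k\partial_l\psi=0$, where $a^{kl}$ is positive definite. The coefficients are not constant, so Theorem \ref{thm:manifold_tri} does not apply directly. Instead I will substitute the radial form $\psi=\phi(r)$ and reduce to the singular ODE of Theorem \ref{gron}.

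\textbf{Key step, and main obstacle.} Using $\partial_k\partial_l\phi(r)=\phi''\frac{x_kx_l}{r^2}+\frac{\phi'}{r}\big(\delta_{kl}-\frac{x_kx_l}{r^2}\big)$ (up to metric corrections from geodesic versus Euclidean distance), I obtain
\[
\alpha(x)\,\phi''(r)+\beta(x)\,\frac{\phi'(r)}{r}=0,\qquad \alpha=a^{kl}\tfrac{x_kx_l}{r^2}\ge c>0,\quad \beta=\mathrm{tr}\,a-\alpha .
\]
The coefficient $\beta$ is nonnegative and bounded. Fix a direction $\theta$ and restrict to the ray $x=r\theta$. This gives the Euler-type equation $\phi''+\frac{b(r)}{r}\phi'=0$ with $b=\beta/\alpha$ continuous and bounded on $(0,\delta)$; near $p$ it is approximately $2n-1$, since $a(0)$ comes from the Hermitian matrices $g_{\varphi_t}(0)^{-1}$. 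The singular coefficient is not an exact Euler constant, and this is the step I expect to be hardest. I would first try writing $b(r)=b(0)+o(1)$ and treating the error term as a perturbation. This puts the equation in the form of the differential inequality $|\phi''+\frac{b_0}{r}\phi'|\le \frac{o(1)}{r}|\phi'|$. I expect Theorem \ref{gron} to allow exactly this kind of perturbation, with factor $|c_j|$ of the appropriate sign. If it does not, there is a direct alternative: $(r^{\,B(r)}\phi')'$-type integration with the integrating factor $\exp\int b/r$. This yields $|\phi'(r)|\le C r^{-\min b}\,|\phi'(r_0)|\cdot(\ldots)$, and the boundedness of $\phi'$ at $0$ together with $b\ge0$ near $2n-1>0$ then forces $\phi'\equiv0$.

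Hence $\phi'\equiv 0$, so $\psi$ is constant on $U_p$. Note that the even-jet condition $u(p)=v(p)$ is not needed here: constants are annihilated by $\partial\bar\partial$. Therefore $\omega_1-\omega_0=\sqrt{-1}\partial\bar\partial\psi=0$, i.e. $\omega_0\equiv\omega_1$ on $U_p$.
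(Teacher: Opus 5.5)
Your proposal is correct and is essentially the paper's own argument. Part 1 follows from Theorem \ref{cale} applied to infinitely many admissible Hessians. Part 2 linearizes along $\varphi_0+t\psi$, substitutes $\psi=\phi(r)$, and applies Theorem \ref{gron} to $\mathcal D_{c_\theta}\mathcal D_0$ along each ray; the geodesic-versus-Euclidean corrections you wave away are exactly what Lemma \ref{lem:distance-expansion} shows to be $O(1)$ in the complex Hessian, so they can be absorbed into $h$.

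A few slips to fix:

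\textbf{The limiting coefficient.} $a(p)=\int_0^1 g_{\varphi_t}(p)^{-1}\,dt$ is generally not the identity. So the limiting coefficient is not $2n-1$ but the direction-dependent $c_\theta=2\sum_i a^{i\bar i}(p)/\bigl(a^{i\bar j}(p)\bar\xi^i\xi^j\bigr)-1\ge 1$. Your observation $\beta\ge 0$ already gives the $c_\theta>-1$ needed for \eqref{bjN}.

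\textbf{Initial conditions.} Theorem \ref{gron} needs $w(0)=w'(0)=0$. Apply it to $w=\phi-\phi(0)$, using $\phi'(0)=0$ from evenness along geodesics (Lemma \ref{odd}). This gives your conclusion that $\psi$ is constant, which is the correct one.

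\textbf{Integrating-factor alternative.} The estimate there runs the other way: it is the lower bound $|\phi'(r)|\ge (r_1/r)^{\min b}|\phi'(r_1)|$. Together with $\min b>0$ near $0$ (true, since $b\to c_\theta\ge 1$) and boundedness of $\phi'$, this forces $\phi'\equiv 0$.
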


In particular, as shown in the proof of Theorem  \ref{thm:local-kahler-form-rigidity}, the potentials corresponding to  the  K\"ahler forms constructed in part \(1\)  have distinct second-order jets.  Part $2$  then implies that none of  these potentials  differs by a locally geodesically radially symmetric function near \(p\).

\vspace{.1in}

The rest of the paper is organized as follows. Sections \ref{section:factor} -- \ref{section:iodea_proof} develop the uniqueness and existence theory for Euler-type ordinary differential equations, which provides the main analytic tool for the subsequent PDE arguments. In detail,   Section \ref{section:factor} introduces an algebraic factorization of Euler-type operators. The uniqueness of solutions to Euler-type ODEs  is proved in Sections \ref{section:ee_proof} and \ref{section:gron_proof}, respectively. See Theorems \ref{ee} for smooth solutions and \ref{gron} for solutions with finite regularity. Building  on these results, Section \ref{section:iodea_proof} develops the existence and uniqueness theory for the associated singular initial value problems (Theorem \ref{iodea}). Beginning with Section \ref{sec8},  we turn to the PDE setting. There, we apply  the  ODE reductions developed earlier  to prove the local radial rigidity dichotomy for higher-order Poisson systems on Riemannian manifolds (Theorems \ref{thm:manifold_tri} and  \ref{thm:application_manifold_2m-1}). 
The local existence and uniqueness result (Theorem \ref{thm:local-kahler-form-rigidity}) for the Calabi-Yau equation is carried out in Section \ref{cas}. Finally, Section \ref{last}   extends this approach to a broader class of weighted divergence-form elliptic operators, establishing the corresponding uniqueness and existence results. 

\vspace{.2in}

\section{Decomposition of Euler-type operators}
\label{section:factor}
A classical model for singular ordinary differential operators near the origin is the Euler-type operator
\begin{equation}\label{euler}
w^{(N)} + \frac{a_1}{r} w^{(N-1)} + \cdots + \frac{a_N}{r^N} w,
\end{equation}
where $N \in \mathbb{Z}^+$   and $(a_1,\dots,a_N)\in\mathbb C^N$.
Such operators exhibit an intrinsic scale-invariant structure and play a fundamental role in the analysis of singular differential equations and inequalities.

A key structural property of Euler-type operators is their factorization into first-order singular operators. To formalize this, let $b \in \mathbb{C}$ and $r_0 > 0$. For a scalar function $w \in C^1((0, r_0))$, we define the first-order operator $\mathcal{D}_b$ by
\begin{equation}\label{1st}
    \mathcal{D}_b w(r) := r^{-b} \frac{d}{dr} \left( r^b w(r) \right) = w'(r) + \frac{b}{r} w(r).
\end{equation}
Here the complex power is defined by $r^b = e^{b \log r}$, where $\log$ denotes the branch of the complex logarithm with a cut along the negative real axis, corresponding to $\arg z \in (-\pi ,  \pi)$.
More generally, for $N \in \mathbb{Z}^+$ and a parameter vector $(b_1, \dots, b_N) \in \mathbb{C}^N$, we consider the $N$-th order operator obtained by composition:
$$\mathcal{D}_{b_N} \cdots \mathcal{D}_{b_1} : C^N((0, r_0) ) \to C((0, r_0) ).$$
For vector-valued functions $w = (w_1, \dots, w_{\nu}): (0, r_0) \to \mathbb{C}^{\nu}$, the operator acts component-wise.

In this section, we show that any Euler-type   operator of the form  \eqref{euler} can be decomposed into a product of first-order operators of the form $\mathcal{D}_{b_N} \cdots \mathcal{D}_{b_1}$, with parameters $b_1, \ldots, b_N\in \mathbb C$.

 \begin{lemma}
 	\label{ele2}
 	For any $(a_1, \dots, a_N) \in \mathbb{C}^N$, there exist constants $(b_1, \dots, b_N) \in \mathbb{C}^N$ such that
 	\begin{equation}
 		\label{cor}
 		w^{(N)} + \frac{a_1}{r}w^{(N-1)} + \cdots + \frac{a_N}{r^N} w = \mathcal{D}_{b_N} \cdots \mathcal{D}_{b_1} w.
 	\end{equation}
 \end{lemma}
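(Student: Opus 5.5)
The natural route is through the Euler substitution, that is, through the scale-invariant operator $\theta := r\frac{d}{dr}$. Both sides of \eqref{cor} are Euler-type operators: each is $r^{-N}$ times a polynomial in $\theta$. We then match the two polynomials. Concretely, the plan has three steps.

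First, I would record the identity
$$r^k w^{(k)} = \theta(\theta-1)\cdots(\theta-k+1)\,w ,$$
which follows by induction on $k$. Applying it termwise gives
$$w^{(N)} + \sum_{j=1}^N \frac{a_j}{r^j} w^{(N-j)} = r^{-N} P(\theta) w, \qquad P(\lambda) := \sum_{j=0}^{N} a_j\, \lambda(\lambda-1)\cdots(\lambda-N+j+1),$$
with $a_0=1$. Thus $P$ is a monic polynomial of degree $N$ with complex coefficients. Equivalently, $P(\lambda)$ is the indicial polynomial: it is the coefficient appearing when the left side of \eqref{cor} is applied to $r^\lambda$.

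Second, I would compute the composition of first-order operators in the same form. We have $\mathcal{D}_b = r^{-1}(\theta + b)$ and $\theta\, r^{-k} = r^{-k}(\theta - k)$. Commuting all powers of $r$ to the left by induction on $N$ gives
$$\mathcal{D}_{b_N}\cdots\mathcal{D}_{b_1} = r^{-N}\prod_{i=1}^{N}\bigl(\theta + b_i - (i-1)\bigr).$$
The factor with index $i$ is shifted by $i-1$ because $i-1$ factors of $r^{-1}$ have already been moved past it. Since the factors are polynomials in $\theta$, they commute, so their order is irrelevant.

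Third, by the fundamental theorem of algebra I would factor $P(\lambda) = \prod_{i=1}^N (\lambda - \lambda_i)$ with $\lambda_i \in \mathbb{C}$. Then I would set $b_i := i-1-\lambda_i$. With this choice the two expressions coincide as operators on $C^N((0,r_0))$, and they act componentwise on vector-valued functions.

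The only delicate point is the bookkeeping of the shift $\theta r^{-k} = r^{-k}(\theta-k)$ in the second step. This step also requires $\theta(r^b w) = r^b(\theta+b)w$ for complex $b$. That holds with the chosen branch, because $\frac{d}{dr} r^b = b r^{b-1}$ on $(0,\infty)$. Everything else is routine.
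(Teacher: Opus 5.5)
Your argument is correct, and it takes a different route from the paper's.

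The paper inducts on $N$ and peels off only the outermost factor. It writes the operator as $\mathcal{D}_{c_N}$ applied to an Euler operator of order $N-1$, and matching coefficients gives a triangular system in which $c_1,\dots,c_{N-1}$ are polynomials in $c_N$. The last equation becomes a degree-$N$ polynomial equation for $c_N$, from which one root is taken. The remaining order-$(N-1)$ operator is then factored by the inductive hypothesis.

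You instead conjugate to $\theta=r\frac{d}{dr}$, which makes both sides $r^{-N}$ times a polynomial in $\theta$. You then apply the fundamental theorem of algebra once, to the whole indicial polynomial. The two routes are closely linked: the paper's equation for $c_N$ is, up to sign, $P(N-1-c_N)=0$. So at each step it selects $b_N=N-1-\lambda$ for a single root $\lambda$, which is exactly your formula for the last index.

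The paper's route buys a self-contained argument within the $\mathcal{D}_b$ calculus, using only coefficient matching as in its worked example. Yours buys a closed-form description of all factorizations. Since a polynomial in $\theta$ is determined by its action on the powers $r^\lambda$, the set $P_{a_1,\dots,a_N}$ of \eqref{P} consists exactly of the vectors with $b_i=i-1-\lambda_i$, where $(\lambda_1,\dots,\lambda_N)$ runs over orderings of the roots of $P$ counted with multiplicity. This gives at once the finiteness the paper asserts after \eqref{P}, and it provides a useful check. In the paper's example the indicial roots are $1,1,3$, and the three orderings give $(-1,0,-1)$, $(-1,-2,1)$ and $(-3,0,1)$. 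So the paper's third triple $(1,0,-1)$ should read $(-1,0,-1)$.

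Your formulation also ties the lemma to the computation in Lemma \ref{el}. It further shows that \eqref{bj} and \eqref{bjN} amount to $\lambda_j\notin\{N,N+1,\dots\}$ and $\mathrm{Re}(\lambda_j)<N$ respectively. These conditions depend only on the indicial roots, not on the chosen ordering.
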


Before proving the lemma in the most general form, we first provide a concrete example to clarify the idea of the proof.

 \vspace{0.1cm}
 
 \begin{example}
 	We seek constants $b_1, b_2, b_3$ such that
 	$$w''' -\frac{2}{r}w'' +\frac{3}{r^2}w' -\frac{3}{r^3}w = \mathcal D_{b_3} \mathcal D_{b_2}\mathcal D_{b_1} w. $$
 \end{example}

 \noindent
First, we determine constants $c_1, c_2, c_3$ such that
\begin{align*}
	 w''' -\frac{2}{r}w'' +\frac{3}{r^2}w' -\frac{3}{r^3}w & = \mathcal{D}_{c_3} \left(w''+ \frac{c_1}{r}w'+\frac{c_2}{r^2}w  \right) \\
	 & =  \frac{d}{dr}\left(w''+ \frac{c_1}{r}w'+\frac{c_2}{r^2}w  \right) +\frac{c_3}{r}\left( w'' +\frac{c_1}{r}w'+\frac{c_2}{r^2}w \right).
\end{align*}

 \vspace{.05in}
 \noindent
 By expanding the right-hand side and matching coefficients, we obtain the system
 	\begin{equation*}
 		\begin{cases}
 			 c_1+c_3 = -2;\\
 			 c_{2} + c_1(c_3-1)= 3;\\
 			 c_2(c_3-2) = -3.
 		\end{cases}
 	\end{equation*}
 From the first equation, we have $$c_1 = -2 - c_3.$$ Substituting this into the second equation yields $$c_2 = 3 - (-2 - c_3)(c_3 - 1) = c_3^2 + c_3 + 1.$$
 Finally, substituting this expression for $c_2$ into the last equation, we obtain
 $\left( c_3^2 + c_3 + 1 \right)(c_3 - 2) = -3,$
 which simplifies to $$  (c_3-1)^2(c_3+1)=0. $$  This gives two possible solutions:
  $$(c_1, c_2, c_3) = (-3, 3 ,1) \ \text{or}\   (-1, 1,-1).$$

 Setting $b_3 = c_3$, we now look for constants $b_1, b_2$ such that the second-order component factors as
  $$w'' +\frac{c_1}{r}w'+\frac{c_2}{r^2}w = \mathcal D_{b_2}\mathcal D_{b_1} w =  \frac{d}{dr}\left(w'+\frac{b_1}{r}w\right)+\frac{b_2}{r} \left(w'+\frac{b_1}{r}w\right) . $$
  This requires
 	\begin{equation*}
 	  			  b_2+b_1= c_1,\ \ \ \  			  b_1(b_2-1) = c_2,
  	\end{equation*}
 	  	For the first case $(c_1, c_2, c_3) = (-3, 3 ,1)$, we have $(b_1, b_2) = (-1,  -2 )$ or $(-3, 0)$. 	For the second case $(c_1, c_2, c_3) = (-1, 1 ,-1)$, we have $(b_1, b_2) = (1, 0)$.
 		 Altogether, the possible choices are
 	$$(b_1, b_2, b_3) =(-1, -2, 1), \text{or}\  (-3, 0, 1),\text{or} \ (1, 0, -1) .$$
\medskip

 \begin{proof}[\textbf{Proof of Lemma \ref{ele2}}: ]
 	
 	We proceed by induction on $N$. For the base case $N=2$, the lemma can be verified by a direct computation:
 	$$\mathcal{D}_{b_2}\mathcal{D}_{b_1} w = \left( \frac{d}{dr} + \frac{b_2}{r} \right) \left( w' + \frac{b_1}{r}w \right) = w'' + \frac{b_1+b_2}{r} w' + \frac{b_1(b_2-1)}{r^2} w.$$
 	Thus, given $(a_1, a_2) \in \mathbb{C}^2$, it suffices to choose $b_1, b_2 \in \mathbb{C}$ satisfying the system
 	$$b_1 + b_2 = a_1, \qquad b_1(b_2 - 1) = a_2.$$
 	
    Now, assume the lemma holds for an operator of order $N-1$. To prove it for order $N$, we first seek coefficients $(c_1, \ldots, c_N) \in \mathbb{C}^N$ such that
 	 			\begin{align*}
 	&	w^{(N)} + \frac{a_1}{r}w^{(N-1)} +\cdots + \frac{a_N}{r^N} w \\= 	&\mathcal{D}_{c_N} \left( w^{(N-1)} + \frac{c_1}{r}w^{(N-2)} + \cdots + \frac{c_{N-1}}{r^{N-1}} w \right) \\
 		 = &\frac{d}{dr}\left( w^{(N-1)} + \frac{c_1}{r}w^{(N-2)} + \cdots + \frac{c_{N-1}}{r^{N-1}} w \right)  +  \frac{c_N}{r}  \left( w^{(N-1)} + \frac{c_1}{r}w^{(N-2)} + \cdots + \frac{c_{N-1}}{r^{N-1}} w \right) .
 	\end{align*}
 	Matching coefficients on both sides leads to the system
 	\begin{equation}
 		\label{sys}
 		\begin{cases}
 			c_1 + c_N = a_1, \\
 			c_{j+1} + c_j(c_N - j) = a_{j+1}, & 1 \leq j \leq N-2, \\
 			c_{N-1}(c_N - N + 1) = a_N.
 		\end{cases}
 	\end{equation}

 	The first $N-1$ equations allow us to express $c_1, \ldots, c_{N-1}$ recursively in terms of $c_N$. Specifically, they take the form
 	\begin{equation}
 		\label{sys2}c_{j} = a_{j} - P_{j}(a_1, \ldots, a_{j-1}, c_N), \qquad 1\le j \le N-1,
 		\end{equation}
 		where $P_{j}$ is a polynomial in $c_N$ of degree $j$. Substituting the resulting expression for $c_{N-1}$ into the final equation of \eqref{sys} yields a single polynomial equation in $c_N$ of degree $N$. By the Fundamental Theorem of Algebra, this equation admits at least one solution in $\mathbb{C}$. For any such $c_N$, the coefficients $c_1, \ldots, c_{N-1}$ are uniquely determined by \eqref{sys2} and
 		the operator factors as
 		\begin{equation*}
 			w^{(N)} + \frac{a_1}{r}w^{(N-1)} + \cdots + \frac{a_N}{r^N} w = \mathcal{D}_{c_N} \left( w^{(N-1)} + \frac{c_1}{r}w^{(N-2)} + \cdots + \frac{c_{N-1}}{r^{N-1}} w \right).
 			\end{equation*}
 			
 			By the inductive hypothesis, there exist constants $(b_1, \dots, b_{N-1}) \in \mathbb{C}^{N-1}$ such that the $(N-1)$-th order operator in the parentheses satisfies
 			$$w^{(N-1)} + \frac{c_1}{r}w^{(N-2)} + \cdots + \frac{c_{N-1}}{r^{N-1}} w = \mathcal{D}_{b_{N-1}} \cdots \mathcal{D}_{b_1} w.$$
 			
 			\noindent
 			Setting $b_N = c_N$, we conclude that
 			$$w^{(N)} + \frac{a_1}{r}w^{(N-1)} + \cdots + \frac{a_N}{r^N} w = \mathcal{D}_{b_N} \mathcal{D}_{b_{N-1}} \cdots \mathcal{D}_{b_1} w,$$
 			which completes the induction and the proof.

 \end{proof}

\vspace{.2in}

\section{Uniqueness of smooth solutions to Euler-type ODEs }
\label{section:ee_proof}
The goal of this section is to study uniqueness of smooth solutions to Euler-type singular ordinary differential inequalities. As shown in \cite{PW} by the first author and Wang, the precise Euler-type singularity is  essential for uniqueness. In fact,  for any $\varepsilon>0$, there exists a nontrivial function $w_\epsilon \in C^\infty([0,r_0))$ such that $$
| w_\epsilon^{(N)}(r) | \le C_\epsilon \sum_{j=0}^{N-1}
\frac{|w_\epsilon^{(j)}(r)|}{r^{N-j+\varepsilon}}
$$ for some constant $C_\epsilon>0$, and $w_\epsilon^{(j)}(0)=0 $ for all $j \ge 0$.  Here $|\cdot|$  is the Euclidean norm. This example shows that any weakening of the Euler-type singularity may lead to a failure of uniqueness. On the other hand, the following unique continuation result  holds for Euler-type inequalities.

\begin{thm}
\label{thm:Pan_Wang_flatness}
(\cite{PW}, Theorem 5)
Let $0 \in [a,b]$ and $N\in \mathbb Z^+$. Suppose  $w =(w_1 , \dots, w_{\nu} ): [a, b]\rightarrow \mathbb C^{\nu}$ is  smooth on $[a, b]$ and satisfies the differential inequality
\[
| w^{(N)}(r) | \le C \sum_{j=0}^{N-1} \frac{|w^{(j)}(r)|}{|r|^{\,N-j}},
\qquad r \in [a,b]\setminus \{0\}
\]
for some constant $C>0$. Then
\[
w^{(j)}(0)=0 \quad \text{for all } \,\, j \ge 0
\quad \Longrightarrow \quad
w \equiv 0 \,\, \text{ on } \,\, [a,b].
\]
\end{thm}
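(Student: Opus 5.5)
This is Theorem 5 of \cite{PW}. We propose a self-contained argument based on the factorization of Lemma \ref{ele2} together with a weighted (Carleman-type) energy estimate near the singular point. It suffices to treat $[0,b]$; the interval $[a,0]$ follows by the reflection $r\mapsto -r$, which preserves the inequality. By the same reasoning, it is also enough to prove $w\equiv 0$ on some small interval $[0,\delta]$. Indeed, away from $0$ the inequality has bounded coefficients, so $|w^{(N)}|\le C'\sum_{j<N}|w^{(j)}|$ there. Standard Gronwall uniqueness for ODE inequalities then propagates vanishing from $[0,\delta]$ to all of $[0,b]$.

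\textbf{Step 1 (reduction to a first-order singular system).} Set $W=(w, rw', r^2w'', \dots, r^{N-1}w^{(N-1)})$ and $t=\log r$, so that $r\,\frac{d}{dr}=\frac{d}{dt}$. Using $r\frac{d}{dr}(r^jw^{(j)})=j\,r^jw^{(j)}+r^{j+1}w^{(j+1)}$, the hypothesis becomes
\[
\Bigl|\tfrac{dW}{dt}-AW\Bigr|\le C_1|W|, \qquad t\in(-\infty,\log b],
\]
where $A$ is a constant (nilpotent-shifted) matrix. Hence $|\dot W|\le K|W|$ for a constant $K$, uniformly as $t\to-\infty$.

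\textbf{Step 2 (flatness gives superpolynomial decay).} Since $w$ is smooth with all derivatives vanishing at $0$, Taylor's theorem gives $|w^{(j)}(r)|\le C_k r^{k}$ for every $k$. Therefore $|W(t)|\le C_k e^{kt}$ for every $k$ as $t\to-\infty$.

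\textbf{Step 3 (Gronwall backward from $-\infty$).} For $s<t$ the differential inequality gives $|W(t)|\le |W(s)|e^{K(t-s)}$. Fix $t$ and choose $k>K$. Then
\[
|W(t)|\le C_k e^{ks}e^{K(t-s)} = C_k e^{Kt}e^{(k-K)s}\longrightarrow 0 \quad (s\to -\infty).
\]
So $W(t)=0$ for every $t$, and thus $w\equiv 0$ on $(0,b]$, hence on $[0,b]$.

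\textbf{Main obstacle.} The key point is Step 1: the Euler scaling must turn the singular coefficients into bounded ones in the variable $t$. Once that holds, flatness, which is faster decay than any exponential $e^{kt}$, beats the fixed exponential growth rate $K$. This also explains why a slightly stronger singularity $r^{-(N-j+\varepsilon)}$ breaks the argument: it produces coefficients $e^{-\varepsilon t}$ that are unbounded as $t\to-\infty$. Step 1 needs care with complex vector values; applying Gronwall to $|W|^2$, via $\frac{d}{dt}|W|^2\le 2K|W|^2$, avoids differentiating $|W|$ where it vanishes. The factorization of Lemma \ref{ele2} is not strictly needed here, though it offers an alternative proof by iterating first-order estimates.
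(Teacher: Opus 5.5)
Your proof is correct. The paper gives no argument of its own for Theorem~\ref{thm:Pan_Wang_flatness}. It quotes the result from \cite{PW} and uses it as a black box in the proof of Theorem~\ref{ees}, where a Taylor-expansion argument first upgrades vanishing to order $N-1$ into flatness. Your route supplies a complete, elementary proof:
\begin{itemize}
\item The Euler substitution $t=\log r$ with $W=(w,rw',\dots,r^{N-1}w^{(N-1)})$ turns the weights $r^{-(N-j)}$ into bounded coefficients. This gives $|\dot W|\le K|W|$ on $(-\infty,\log b]$, with $K$ depending only on $N$ and $C$ (e.g.\ $K\le N+C\sqrt N$).
\item Gronwall applied to $e^{-2Kt}|W|^2$ yields $|W(t)|\le |W(s)|e^{K(t-s)}$ for $s<t$.
\item Flatness gives $|W(s)|=O(e^{ks})$ for every $k$. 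Taking $k>K$ and letting $s\to-\infty$ forces $W(t)=0$.
\end{itemize}
All steps check out, including the reflection to $[a,0]$ and the use of $|W|^2$ to handle complex vector values.

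Beyond being self-contained, your argument shows exactly where the Euler structure enters: the coefficients are bounded in $t$, and this fails for weights $r^{-(N-j+\varepsilon)}$, as you note. It also proves more than the stated theorem. Flatness is used only through $|W(s)|=O(e^{ks})$ for a single $k>K$. So if $w\in C^M([0,b])$ with $M>K$ and $w^{(i)}(0)=0$ for all $i<M$, then already $w\equiv 0$. This threshold is attained in the example $w=r^{k+1}$, $w'=\frac{k+1}{r}w$, from Section~\ref{section:ee_proof}. There $W=e^{(k+1)t}$ decays at exactly the Gronwall rate $K=k+1$, and $w\not\equiv 0$.

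Two cosmetic points. First, your opening sentence announces the factorization of Lemma~\ref{ele2} and a Carleman-type estimate, but your proof uses neither. Second, the preliminary reduction to a small interval $[0,\delta]$ followed by propagation is redundant, since Step~3 already applies for every $t\le \log b$.
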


The   flatness assumption  $w^{(j)}(0)=0 $ for all $j \ge 0$ cannot, in general, be weakened to finite-order vanishing. Indeed, for any fixed $k \in \mathbb{Z}^+$, one can construct a standard Euler equation that admits a non-trivial solution  vanishing to order higher than $k$ at the origin. For instance, the smooth function  $w= r^{k+1}$ vanishes to order $k$ at the origin and satisfies $ w' - \frac{k+1}{r}w =0$.
Thus the vanishing of only finitely many  jets is generally insufficient to force triviality.  A related result \cite{PW2} shows, however, that finite-order vanishing does imply triviality, provided the constant $C$ is sufficiently small.

In this section, we establish a   uniqueness result for smooth solutions to the corresponding ordinary differential inequality using the  factorized singular operator $\mathcal{D}_{b_N} \cdots \mathcal{D}_{b_1}$,  under a   mild condition on $(b_1, \ldots, b_N)\in \mathbb C^N$.  
 In contrast to  \cite{PW2},  no smallness assumptions are imposed on the constant $C$ or the parameters $|b_j|$.

\begin{thm}
	\label{ee}
     Let $r_0>0, N\in \mathbb Z^+$ and $(b_1, \ldots, b_N)\in \mathbb C^N$.  Suppose that   $w = (w_1, \dots, w_{\nu}): [0, r_0) \to \mathbb{C}^{\nu}$ is a vector-valued smooth function on $[0, r_0) $ (i.e., each component $w_i \in C^\infty([0, r_0))$) and satisfies the differential inequality
      \begin{equation}\label{gode}
     	   | \mathcal{D}_{b_N} \cdots \mathcal{D}_{b_1} w(r) | \leq h(r)\sum_{j=0}^{N-1} \frac{| w^{(j)}(r) |}{r^{N-j}}, \quad r \in (0, r_0) 
\end{equation}
     for some nonnegative function $h\in C([0, r_0))$ with  $  h(0) =0$.  If the parameters $b_j$ satisfy
     \begin{equation}
     	\label{bj}
         j -  b_j  - N \notin \mathbb{Z}^+, \ \  j = 1, \ldots, N,
     \end{equation}
and $w$ satisfies the vanishing conditions$$w^{(j)}(0) = 0, \ \  j = 0, \dots, N-1,$$  then $ w \equiv 0 $ on $[0, r_0)$.

\end{thm}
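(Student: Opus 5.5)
The plan is to upgrade the finite vanishing $w^{(j)}(0)=0$, $0\le j\le N-1$, to complete flatness $w^{(j)}(0)=0$ for all $j\ge 0$, and then conclude with Theorem \ref{thm:Pan_Wang_flatness} on $[a,b]=[0,r_1]$ for every $r_1<r_0$.

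\emph{Reduction to Theorem \ref{thm:Pan_Wang_flatness}.} First I check that \eqref{gode} implies the hypothesis of that theorem. Expanding the composition (as in the computation in the proof of Lemma \ref{ele2}) gives
\[
\mathcal{D}_{b_N}\cdots\mathcal{D}_{b_1}w=w^{(N)}+\sum_{j=0}^{N-1}\frac{a_{N-j}}{r^{N-j}}w^{(j)}
\]
for some constants $a_i\in\mathbb C$. Since $h$ is bounded on $[0,r_1]$, the triangle inequality yields
\[
|w^{(N)}|\le \Big(\sup_{[0,r_1]} h+\max_i|a_i|\Big)\sum_{j=0}^{N-1}\frac{|w^{(j)}|}{r^{N-j}} \qquad \text{on } (0,r_1].
\]
Hence flatness at $0$ gives $w\equiv0$ on each $[0,r_1]$, and therefore on $[0,r_0)$.

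\emph{Flatness, by induction on the vanishing order.} Suppose $k\ge N$ and $w^{(j)}(0)=0$ for all $j<k$. I want to show $w^{(k)}(0)=0$. By Taylor's theorem (componentwise, since $w$ is smooth) write $w(r)=c\,r^k+R(r)$ with $c=w^{(k)}(0)/k!\in\mathbb C^\nu$. The remainder satisfies $R^{(j)}(r)=O(r^{k+1-j})$ for $0\le j\le N$ as $r\to0^+$; one can see this by applying Taylor's theorem to each $R^{(j)}$, which vanishes to order $k-j$ at $0$.

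The key computation is $\mathcal D_b r^s=(s+b)r^{s-1}$. Iterating it gives
\[
\mathcal{D}_{b_N}\cdots\mathcal{D}_{b_1}r^k=\Big(\prod_{j=1}^N (k-j+1+b_j)\Big)r^{k-N}=:P_k\,r^{k-N}.
\]
A factor vanishes exactly when $j-b_j-1=k$, that is, when $j-b_j-N=k-N+1$. Since $k\ge N$, the number $k-N+1$ is a positive integer, and condition \eqref{bj} excludes this. Therefore $P_k\neq0$.

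Now I substitute into \eqref{gode}. Using the expansion of the operator and the remainder bounds:
\begin{itemize}
\item the left side equals $|P_k c|\,r^{k-N}+O(r^{k-N+1})$;
\item the right side is at most $h(r)\big(C_k|c|\,r^{k-N}+O(r^{k-N+1})\big)$, because $|w^{(j)}|/r^{N-j}\le \frac{k!}{(k-j)!}|c|\,r^{k-N}+O(r^{k-N+1})$.
\end{itemize}
Dividing by $r^{k-N}$ and letting $r\to0^+$, and using $h(0)=0$, gives $|P_k|\,|c|\le 0$. Hence $c=0$, which completes the induction. The base case $k=N$ is exactly the given vanishing hypothesis.

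The main point needing care is the remainder bookkeeping. The crucial observation is that $h(0)=0$ kills the leading term on the right-hand side, not just the remainder. Without this, the argument would only produce $|P_k|\le C$, which fails for small $k$. The nonresonance condition \eqref{bj} is what guarantees $P_k\neq0$ for every $k\ge N$. I expect no other obstacle.
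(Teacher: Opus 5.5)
Your proof is correct and follows the paper's route: the nonresonance condition \eqref{bj} makes the leading coefficient $\prod_{j=1}^N(k-j+1+b_j)$ nonzero at the first nonvanishing Taylor order $k\ge N$, which forces flatness, and Theorem~\ref{thm:Pan_Wang_flatness} then finishes the argument. The only differences are streamlining. You use $h(0)=0$ directly at leading order instead of passing through the quantitative Theorem~\ref{ees} with its constant $M_b$ and smallness condition $C<1/(NM_b)$, and you apply Theorem~\ref{thm:Pan_Wang_flatness} on each $[0,r_1]$ with the non-small constant $\sup_{[0,r_1]}h+\max_i|a_i|$, which removes the paper's extra appeal to Corollary 1.3 of \cite{PW2} away from the origin.
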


In particular,   as a consequence of Theorem \ref{ee}, any nonconstant function satisfying \eqref{gode} under condition \eqref{bj} and vanishing to order $N-1$ at $0$ cannot be smooth at $0$.  
\medskip

Rather than proving Theorem \ref{ee} directly, we establish a  uniqueness result under  a weaker differential inequality, from which Theorem \ref{ee} follows immediately.  
To this end, we first note that if $(b_1, \ldots, b_N)\in \mathbb C^N$ satisfies \eqref{bj}, then
\begin{equation}\label{Mb}
 M_b: = \sup_{p\in \mathbb Z, p\ge N}   \dfrac{ \prod_{k=1}^{N-1} \left |p-(k-1) \right| }{ 	\prod_{j=1}^N |p-(j-1)+b_j|} <\infty.
\end{equation}
 To justify this, recall that condition \eqref{bj} requires $ j-   b_j  -N \notin \mathbb{Z}^+  $  for each $j = 1, \dots N$.  Thus for any integer $p \ge N$,  $$p  - j + 1 +  b_j  = (p  -N+1) - \left (j-  b_j  -N \right )\ne 0. $$
  Consequently,
  \begin{equation}\label{pne}
      \prod_{j=1}^N |p-(j-1)+b_j| \neq 0.
  \end{equation}	 Moreover,
 	\begin{align*}
 	&	\lim_{p \to \infty}  \dfrac{ \prod_{k=1}^{N-1} \left |p-(k-1) \right| }{ 	\prod_{j=1}^N |p-(j-1)+b_j|} \\
 	  = 	&\lim_{p \to \infty} \left | \dfrac{p}{p+b_1} \right | \cdot  \left | \dfrac{p-1}{p-1+b_2} \right | \dots \left | \dfrac{p-(N-2)}{p-(N-2)+b_{N-1}} \right | \cdot \dfrac{1}{\left |p-(N-1)+b_{N} \right | }  =0.
  	\end{align*}
It follows that the supremum  $M_b$ is finite. Note that  $M_b$ depends only  on $(b_1, \ldots, b_N)$.

\medskip

\begin{thm}
	\label{ees}
	Let $r_0 > 0$, $N \in \mathbb Z^+$, and $(b_1, \ldots, b_N) \in \mathbb{C}^N$ satisfy \eqref{bj}. Let $M_b$ be  defined as   in \eqref{Mb}.
      Suppose that   $w = (w_1, \dots, w_{\nu}): [0, r_0) \to \mathbb{C}^{\nu}$ is smooth on $[0, r_0) $ and satisfies the differential inequality
\begin{equation}
     	\label{godewe}
   | \mathcal{D}_{b_N} \cdots \mathcal{D}_{b_1} w(r) | \leq C\sum_{j=0}^{N-1} \frac{| w^{(j)}(r) |}{r^{N-j}}, \quad r \in (0, r_0),
\end{equation}
     where the constant $C$ satisfies \begin{equation}\label{cl1}
         C< \frac{1}{M_b N}.
     \end{equation}
    If $$w^{(j)}(0) = 0, \quad   j = 0, \dots, N-1,$$
   then $ w \equiv 0 $ on $[0, r_0)$.
   \end{thm}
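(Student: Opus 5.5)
The plan is to show first that $w$ is flat at $0$, i.e. $w^{(p)}(0)=0$ for every $p\ge 0$, and then conclude with Theorem~\ref{thm:Pan_Wang_flatness}. For the second step we need an Euler-type inequality. Expanding $\mathcal{D}_{b_N}\cdots\mathcal{D}_{b_1}w$ by Lemma~\ref{ele2} (read in reverse) gives $w^{(N)}+\sum_j a_j r^{-j}w^{(N-j)}$. Hence \eqref{godewe} yields $|w^{(N)}|\le C'\sum_{j<N}|w^{(j)}|/r^{N-j}$ on $(0,r_0)$. Extending $w$ smoothly to a two-sided interval (for instance by Borel/Seeley extension, which stays flat, or by zero, since flatness makes the zero extension smooth) and restricting to $[0,b]$ lets us apply Theorem~\ref{thm:Pan_Wang_flatness}. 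So everything reduces to proving flatness by induction on the order of vanishing.

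\textbf{Inductive step.} Suppose $w^{(j)}(0)=0$ for $j<p$, where $p\ge N$, and write $w(r)=\frac{w^{(p)}(0)}{p!}r^p+o(r^p)$; we need $w^{(p)}(0)=0$. Because $w$ is smooth, the remainder can be differentiated termwise: $w^{(j)}(r)=\frac{w^{(p)}(0)}{(p-j)!}r^{p-j}+o(r^{p-j})$ for $j\le N$.

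\textbf{Compute both sides.} Each operator $\mathcal{D}_b$ maps $r^q$ to $(q+b)r^{q-1}$. Thus the left side of \eqref{godewe} equals
\[
\frac{w^{(p)}(0)}{p!}\prod_{j=1}^N\bigl(p-(j-1)+b_j\bigr)\,r^{p-N}+o(r^{p-N}).
\]
The $o$-term is controlled since the operator is a combination of $r^{-k}w^{(N-k)}$. The right side equals
\[
C\,\frac{|w^{(p)}(0)|}{p!}\sum_{j=0}^{N-1}\frac{p!}{(p-j)!}\,r^{p-N}+o(r^{p-N}).
\]

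\textbf{Compare the leading coefficients.} Divide both sides by $r^{p-N}$ and let $r\to0$. This gives
\[
|w^{(p)}(0)|\prod_{j=1}^N|p-j+1+b_j|\le C\,|w^{(p)}(0)|\sum_{j=0}^{N-1}\prod_{k=1}^{j}(p-k+1).
\]
Each summand on the right is at most $\prod_{k=1}^{N-1}|p-(k-1)|$, since $p\ge N$ makes every factor at least $1$, so the sum is at most $N\prod_{k=1}^{N-1}|p-(k-1)|$. By \eqref{pne} the product on the left is nonzero, and by \eqref{Mb} the right side is at most $CNM_b\prod_j|p-j+1+b_j|\,|w^{(p)}(0)|$. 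Since $CNM_b<1$ by \eqref{cl1}, this forces $w^{(p)}(0)=0$.

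The main obstacle is making sure the $o(r^{p-N})$ errors are legitimately controlled, so that comparing leading coefficients is rigorous. This is where smoothness is used, via Taylor's theorem applied to each derivative. The other delicate point is checking that the extension is compatible with the hypotheses of Theorem~\ref{thm:Pan_Wang_flatness}. Alternatively, one could avoid extension entirely by noting that the theorem allows $a=0$.
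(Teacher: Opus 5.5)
Your proposal is correct and follows essentially the same route as the paper. You prove flatness at $0$ by comparing the leading $r^{p-N}$ coefficients at the first nonvanishing jet, using \eqref{pne}, the definition of $M_b$, the bound $\sum_{j<N}\prod_{k\le j}(p-k+1)\le N\prod_{k\le N-1}(p-k+1)$, and $CNM_b<1$. You then apply Theorem~\ref{thm:Pan_Wang_flatness} to the Euler-type inequality obtained by expanding $\mathcal{D}_{b_N}\cdots\mathcal{D}_{b_1}$. Your induction on the vanishing order and your remark about applying that theorem on $[0,b]$ (taking $a=0$) for each $b<r_0$ are just more explicit versions of the paper's minimal-counterexample argument.
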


Our approach is based  on the observation that, for  smooth functions subject to  operators satisfying \eqref{bj}, vanishing to   order $N-1$ implies the flatness at the origin. This reduction allows us to apply Theorem \ref{thm:Pan_Wang_flatness} directly to conclude the triviality of solutions. 

\begin{proof}[\textbf{Proof of Theorem \ref{ees}:}] Since the differential inequality \eqref{godewe} can be rewritten as
$$|w^{(N)}(r) | \leq \tilde C \sum_{k=0}^{N-1} \frac{|w^{(k)}(r)|}{r^{N-k}} $$
for some $\tilde C$ dependent only on $C$ and $b_j, j=1, \ldots, N$, in view of Theorem \ref{thm:Pan_Wang_flatness}  it suffices to show  that   $w $ vanishes to infinite order at the origin.
	Suppose this is not the case, and let $p_0 \geq N$ be the smallest integer such that $w^{(p_0)}(0) \neq (0, \dots, 0).$
	It follows that there exists a vector $c  \neq (0, \dots, 0)$ such that
  $$w(r) = c r^{p_0} + O(r^{p_0+1}) \quad \text{as } \,\, r \to 0^+.$$
  Since $w$ is $C^{\infty}$, as proved in the Appendix to \cite{PWY}, we may formally differentiate this Taylor expansion of each component $w_i$   consecutively  to obtain
  \allowdisplaybreaks
  \begin{align*}
  	w'(r) & = c p_0 r^{p_0-1} + O(r^{p_0}); \\
  	\vdots & \\
  	w^{(N)} (r) & = c\prod_{j=1}^N \left (p_0-(j-1) \right )  \cdot r^{p_0-N} + O( r ^{p_0-N+1}).
  \end{align*}

  \noindent
   Substituting these expansions into \eqref{godewe}, a direct computation yields
 \begin{equation}\label{fti}
     \begin{split}
 	&	\ |c|\prod_{j=1}^N |p_0-(j-1)+b_j|  \cdot r^{p_0-N} + O( r ^{p_0-N+1})   \\
 \quad  \leq&\  C|c| \sum_{j=0}^{N-1} \prod_{k=1}^j \left |p_0-(k-1) \right|\cdot r ^{p_0-N} + O( r ^{p_0-N+1})  \\
 \quad \leq & \ C|c|N \prod_{k=1}^{N-1} \left |p_0-(k-1) \right|\cdot r ^{p_0-N} + O( r ^{p_0-N+1})
     \end{split}
 \end{equation}
as $  r \to 0^+.$
In view of \eqref{pne}, we can  divide both sides of \eqref{fti} by $ |c|\prod_{j=1}^N |p_0-(j-1)+b_j| \cdot r^{p_0-N}$. Applying the definition of $M_b$ in \eqref{Mb}, we get
\begin{equation*}
		  1 < CNM_b + O( r )\quad \text{as} \,\,  r \to 0^+ .
\end{equation*}
 Taking the limit as $r \to 0^+$, we obtain $1 \leq CNM_b$. This contradicts the assumption \eqref{cl1}, thereby proving that $w^{(p)}(0)=0$ for all $p \geq 0$.

\end{proof}

Theorem \ref{ee} is now an immediate consequence of Theorem \ref{ees}.

\begin{proof}[\textbf{Proof of Theorem \ref{ee}}: ]
Since $h\in C([0, r_0))$ and  $  h(0) =0$, there exists some positive   $r_1<r_0$  such that
$$   h(r)< \frac{1}{NM_b}, \ \
 \quad r \in (0, r_1).$$
Thus $w$ satisfies \eqref{godewe} on $(0, r_1)$. Applying Theorem \ref{ees} yields $w\equiv 0$ on $[0, r_1)$.   When $r\ge r_1$, since the weights $r^{-(N-j)}$ are bounded on the interval $[r_1, r_0)$, Corollary 1.3 in \cite{PW2} further ensures that $w \equiv 0$ on the entire  domain $[0, r_0)$.

\end{proof}

It is worth noting that condition \eqref{bj} is quite mild, requiring only that each parameter $b_j$ avoids a certain discrete set of the real axis. As the following example shows, this condition  is also necessary:    the conclusion of Theorem \ref{ee} fails whenever  \eqref{bj} is violated.

\begin{example}
	\label{ex1}
	\upshape	Let $(b_1, \dots, b_N) \in \mathbb{C}^N$ be such that $j_0 -  b_{j_0}  - N \in \mathbb{Z}^+$ for some $j_0 \in \{ 1, \dots, N \}$. Let $w(r) = r^l$, where $$l: = j_0 - b_{j_0} - 1.$$ Then the assumption implies that   $  l (\in \mathbb Z^+) \geq N$. Therefore  $w \in C^{\infty}([0,1])$ and
		$$w^{(j)}(0) = 0, \quad   j = 0, \dots, N-1.$$ Moreover, by a direct computation, we have
	\begin{equation*}
		\mathcal{D}_{b_{j_0}} \dots \mathcal{D}_{b_1} w = (l + b_1)  \dots \big( l - (j_0-1) + b_{j_0} \big) r^{l-j_0} = 0.
		\end{equation*}
		  Thus inequality \eqref{gode} is satisfied with $h(r)\equiv 0$.
        	This example demonstrates that condition \eqref{bj} is necessary.

	\end{example}
	
	On the other hand, Example \ref{ex2} below demonstrates that the conclusion in Theorem \ref{ee} may fail if $w$ has only finite regularity, indicating that the smoothness assumption on solutions is essential. 
    
\begin{example}
	\label{ex2}
 \upshape   Consider the simple case $N=1$ and $b_1=-\frac{3}{2}$, and the function  $w(r)=r^{\frac{3}{2}}$. Clearly,    $w(0) = 0$ and satisfies the inequality \eqref{gode} with $h(r)\equiv 0$ as
\[
\mathcal D_{b_1} w(r)
= w'(r) - \frac{\frac{3}{2}}{r} w(r)
= 0.
\]
Furthermore, condition \eqref{bj} is satisfied since
\[
j - b_j - N
= 1 - \left(-\frac{3}{2}\right) - 1
= \frac{3}{2} \notin \mathbb{Z}^+.
\]
Noting $w \in C^1([0,1)) \setminus C^{\infty}([0,1))$, this case shows that Theorem \ref{ee} fails without the smoothness assumption on $w$.

 \end{example}

  Theorem \ref{ee} can be used to study uniqueness of solutions to certain  singular linear differential equations with nonconstant coefficients. In detail, given $(a_1, \ldots, a_N)\in \mathbb C^N$, we first define the set
$P_{a_1, \ldots, a_N}$ as
\begin{equation}\label{P}
  P_{a_1, \ldots, a_N}:  = \left\{(b_1, \ldots, b_N)\in \mathbb C^N:   w^{(N)} + \frac{a_1}{r}w^{(N-1)} + \cdots + \frac{a_N}{r^N} w = \mathcal{D}_{b_N} \cdots \mathcal{D}_{b_1} w    \right\} .
\end{equation}
According to the proof of Lemma \ref{ele2}, the set $ P_{a_1, \ldots, a_N}$ has finite cardinality. Let
\begin{equation}\label{B_1}
    I_1: = \{(b_1, \ldots, b_N)\in \mathbb C^N: j -  b_j  - N \notin \mathbb{Z}^+, j = 1, \ldots, N\}.
   \end{equation}
 That is, $I_1$ is the collection of all parameters such that \eqref{bj} in Theorem \ref{ee} is satisfied.

\begin{cor}
	\label{eenl}
    Let $r_0>0$ and $N \in \mathbb Z^+$.  Suppose that   $w = (w_1, \dots, w_{\nu}): [0, r_0) \to \mathbb{C}^{\nu}$ is smooth on $[0, r_0) $ and       satisfies the differential system
\begin{equation*}
        	w^{(N)} + \frac{h_1(r)}{r}w^{(N-1)} + \cdots + \frac{h_N(r)}{r^N} w =0, \quad r \in (0, r_0) 
\end{equation*}
    for some functions $h_j\in C([0, r_0)), j =1, \ldots, N$. Let $P_{h_1(0), \ldots, h_N(0)}$ and $ I_1$ be defined as in \eqref{P} and \eqref{B_1},  respectively. If
    $$ P_{h_1(0), \ldots, h_N(0)} \cap I_1\ne \emptyset$$
    and
     $$w^{(j)}(0) = 0, \ \  j = 0, \dots, N-1,$$
   then $ w \equiv 0 $ on $[0, r_0)$.
\end{cor}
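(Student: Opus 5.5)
Fix a parameter vector $(b_1,\ldots,b_N)\in P_{h_1(0),\ldots,h_N(0)}\cap I_1$. By definition of $P_{h_1(0),\ldots,h_N(0)}$ in \eqref{P}, the constant-coefficient Euler operator with coefficients $a_j:=h_j(0)$ satisfies
\[
w^{(N)} + \frac{a_1}{r}w^{(N-1)} + \cdots + \frac{a_N}{r^N} w = \mathcal{D}_{b_N} \cdots \mathcal{D}_{b_1} w .
\]
We will rewrite the given system as a perturbation of this operator and then apply Theorem \ref{ee}.

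Subtracting the given equation from this identity, we find, for $r\in(0,r_0)$,
\[
\mathcal{D}_{b_N} \cdots \mathcal{D}_{b_1} w(r) = \sum_{j=1}^{N} \frac{h_j(0)-h_j(r)}{r^{j}}\, w^{(N-j)}(r).
\]
Taking norms and reindexing with $k=N-j$ gives
\[
| \mathcal{D}_{b_N} \cdots \mathcal{D}_{b_1} w(r) | \le h(r)\sum_{k=0}^{N-1}\frac{|w^{(k)}(r)|}{r^{N-k}},
\qquad h(r):=\max_{1\le j\le N}|h_j(r)-h_j(0)|.
\]

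The function $h$ is nonnegative, continuous on $[0,r_0)$ as a maximum of finitely many continuous functions, and satisfies $h(0)=0$. Hence $w$ satisfies \eqref{gode}. Condition \eqref{bj} holds because $(b_1,\ldots,b_N)\in I_1$. The vanishing conditions $w^{(j)}(0)=0$ for $0\le j\le N-1$ are given, and $w$ is smooth. Theorem \ref{ee} therefore yields $w\equiv0$ on $[0,r_0)$.

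No step is a real obstacle. The only points to check are that the difference of coefficients produces exactly the weight $r^{-(N-k)}$ attached to $w^{(k)}$, and that $h$ is continuous with $h(0)=0$; both follow directly from the continuity of the $h_j$.
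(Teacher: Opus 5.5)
Your proposal is correct and follows the paper's proof. Both freeze the coefficients at $r=0$, pick $(b_1,\ldots,b_N)\in P_{h_1(0),\ldots,h_N(0)}\cap I_1$, bound the coefficient differences by a continuous $h$ with $h(0)=0$ (your explicit choice $h(r)=\max_j|h_j(r)-h_j(0)|$ is exactly such a function), and then invoke Theorem \ref{ee}.
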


\begin{proof}
  We first observe that $w$ satisfies
    $$  	\left|w^{(N)} + \frac{h_1(0)}{r}w^{(N-1)} + \cdots + \frac{h_N(0)}{r^N} w \right|\le \sum_{j=0}^{N-1}|h_{N-j}(r)-h_{N-j}(0)|\cdot \frac{| w^{(j)}(r) |}{r^{N-j}}, \quad r \in (0, r_0). $$
    Since $h_j\in C([0, r_0)), j =1, \ldots, N$, there exists some function $h\in  C([0, r_0))$ with $h(0)=0$, such that for all $j =0, \ldots, N-1$,
    $$ |h_{N-j}(r)-h_{N-j}(0)|\le h(r), \quad r \in (0, r_0).$$
    On the other hand, let $(b_1, \ldots, b_N)\in P_{h_1(0), \ldots, h_N(0)} \cap B_1$. Then $(b_1, \ldots, b_N) $ satisfies \eqref{bj}, and
    $$ |\mathcal{D}_{b_N} \cdots \mathcal{D}_{b_1} w(r) |=\left| w^{(N)} + \frac{h_1(0)}{r}w^{(N-1)} + \cdots + \frac{h_N(0)}{r^N} w\right|\le h(r)\sum_{j=0}^{N-1}   \frac{| w^{(j)}(r) |}{r^{N-j}}.$$
      By Theorem \ref{ee}, $w\equiv 0$.
      
\end{proof}

\vspace{.2in}

\section{Uniqueness of non-smooth solutions to Euler-type ODEs}
\label{section:gron_proof}

Despite the   counterexamples in the previous section, it is natural to ask whether uniqueness for \eqref{gode} still holds under limited regularity,  specifically when $w \in C^N$, since the inequality and the vanishing condition at the origin   involve only derivatives up to order $N$. In this section, we show  that  uniqueness can still be expected, provided that a stronger condition \eqref{bjN} below is imposed on the coefficients $(b_1, \dots, b_N)$.

\begin{thm}
	\label{gron}
       Let $r_0>0, N\in \mathbb Z^+$ and $(b_1, \ldots, b_N)\in \mathbb C^N$. Suppose $w = (w_1, \dots, w_{\nu}): [0, r_0) \to \mathbb{C}^{\nu}$ is a vector-valued $C^N$ function on $[0, r_0) $ that satisfies the differential inequality
       \begin{equation}\label{gode1}
     	   | \mathcal{D}_{b_N} \cdots \mathcal{D}_{b_1} w(r) | \leq h(r)\sum_{j=0}^{N-1} \frac{| w^{(j)}(r) |}{r^{N-j}}, \quad r \in (0, r_0)
\end{equation}
     for some nonnegative function $h\in C([0, r_0))$ with  $  h(0) =0$.  If
\begin{equation}
	\label{bjN}
   j - \mathrm{Re}(b_j) - N<1, \quad j =1, \ldots, N,
\end{equation}
and $$w^{(j)}(0)=0,\ \  j=0, \ldots,  N-1,$$ then $ w \equiv 0 $ on $[0,r_0)$.

\end{thm}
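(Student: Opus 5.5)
The plan is to invert the factorized operator one first-order factor at a time, using explicit integral formulas, and then close a Gronwall/Hardy-type estimate on a small interval $(0,r_1)$ where $h$ is small. Set $w_0:=w$ and $w_k:=\mathcal D_{b_k}w_{k-1}$ for $k=1,\dots,N$, so that $w_N=\mathcal D_{b_N}\cdots\mathcal D_{b_1}w$. Expanding the compositions gives a lower-triangular relation with unit diagonal,
\[
w_k=\sum_{i=0}^{k}\alpha_{k,i}\,\frac{w^{(i)}}{r^{k-i}},\qquad \alpha_{k,k}=1 ,
\]
and inverting it gives $w^{(j)}=\sum_{i\le j}\beta_{j,i}\,w_i/r^{j-i}$ with constants depending only on $b$.

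\textbf{Step 1: a priori decay.} Since $w\in C^N$ and $w^{(j)}(0)=0$ for $j\le N-1$, Taylor's theorem with remainder gives $w^{(j)}(r)=o(r^{N-j})$ as $r\to0^+$ for $0\le j\le N$. Through the triangular relation this implies $w_k(r)=o(r^{N-k})$. In particular the quantity
\[
\Phi(r):=\sup_{0<s\le r}\sum_{j=0}^{N-1}\frac{|w^{(j)}(s)|}{s^{N-j}}
\]
is finite for $r<r_0$ and tends to $0$ as $r\to0^+$.

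\textbf{Step 2: integral representation.} For each $k$, the identity $(r^{b_k}w_{k-1})'=r^{b_k}w_k$ integrates on $[\epsilon,r]$. The boundary term $|\epsilon^{b_k}w_{k-1}(\epsilon)|=\epsilon^{\mathrm{Re}\,b_k}\,o(\epsilon^{N-k+1})$ tends to $0$ precisely because \eqref{bjN} gives $\mathrm{Re}\,b_k+N-k+1>0$. Hence
\[
w_{k-1}(r)=r^{-b_k}\int_0^r s^{b_k}w_k(s)\,ds .
\]
I expect this step to be the main obstacle: it is exactly where \eqref{bjN} enters, replacing the smoothness used in Theorem \ref{ee}. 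Example \ref{ex2} shows that without it the homogeneous solutions $r^{-b_k}$ cannot be excluded.

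\textbf{Step 3: descending estimates.} Let $\bar h(r)=\sup_{[0,r]}h$. From \eqref{gode1} we have $|w_N(s)|\le \bar h(r)\Phi(r)$ for $s\le r$. Inductively downward in $k$, assume $|w_k(s)|\le C_k\bar h(r)\Phi(r)s^{N-k}$. Step 2 then gives
\[
|w_{k-1}(s)|\le C_k\bar h(r)\Phi(r)\,s^{-\mathrm{Re}\,b_k}\int_0^s t^{\mathrm{Re}\,b_k+N-k}\,dt=\frac{C_k}{\mathrm{Re}\,b_k+N-k+1}\,\bar h(r)\Phi(r)\,s^{N-k+1},
\]
where the integral converges again by \eqref{bjN}. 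Feeding these bounds into the inverted triangular relation gives $|w^{(j)}(s)|/s^{N-j}\le K\bar h(r)\Phi(r)$ for all $s\le r$ and $j\le N-1$, with $K$ depending only on $b$ and $N$. Taking the supremum over $s\le r$ yields $\Phi(r)\le K\bar h(r)\Phi(r)$.

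\textbf{Step 4: conclusion.} Since $h(0)=0$, choose $r_1\in(0,r_0)$ with $K\bar h(r_1)<1$. Then $\Phi(r_1)=0$, so $w\equiv0$ on $[0,r_1]$. On $[r_1,r_0)$ the weights $r^{-(N-j)}$ are bounded, so the inequality becomes a regular linear differential inequality $|w^{(N)}|\le C\sum_{j<N}|w^{(j)}|$ with zero Cauchy data at $r_1$. The standard Gronwall argument, or Corollary 1.3 of \cite{PW2} as invoked in the proof of Theorem \ref{ee}, then gives $w\equiv0$ on all of $[0,r_0)$.
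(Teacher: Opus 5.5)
Your proof is correct and follows essentially the paper's route. You invert $\mathcal D_{b_N}\cdots\mathcal D_{b_1}$ by iterated integrals, using \eqref{bjN} both to kill the boundary terms and to make the weights integrable. You bound the weighted lower-order derivatives by the top quantity, close the estimate on an interval where $h$ is small, and continue past $r_1$ by a regular Gronwall argument or \cite{PW2}. The differences are cosmetic: you close the loop on $\Phi$ rather than on $M(r)=\max_{[0,r]}|\mathcal D_{b_N}\cdots\mathcal D_{b_1}w|$, and you absorb $h$ directly through $\bar h$ instead of first proving the small-constant version, Theorem \ref{gronw}.

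One slip: since only $w^{(j)}(0)=0$ for $j\le N-1$ is assumed, Taylor gives $w^{(j)}(r)=O(r^{N-j})$, not $o(r^{N-j})$ (take $w=r^N$). So $\Phi(r)$ need not tend to $0$. This does not affect the proof, because you only use that $\Phi$ is finite and that $\epsilon^{\mathrm{Re}\,b_k}\,O(\epsilon^{N-k+1})\to 0$.
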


As will be shown below, the stronger condition  \eqref{bjN}  on the parameters $(b_1, \ldots, b_N)$  enables an estimate that controls all lower-order derivatives of $w$ in terms of the quantity $\mathcal{D}_{b_N} \cdots \mathcal{D}_{b_1} w$. When combined with the differential inequality, this estimate forces the solution to vanish identically. As in Theorem \ref{ee}, we   establish  uniqueness by proving it under a weaker differential inequality. 

\begin{thm}
	\label{gronw}
       Let $r_0>0$ and $ N\in \mathbb Z^+$, and let $(b_1, \ldots, b_N)\in \mathbb C^N$ satisfy \eqref{bjN}. There exists a constant $C>0$ dependent only on $(b_1, \ldots, b_N)$, such that   every $C^N$ function $w = (w_1, \dots, w_{\nu}): [0, r_0) \to \mathbb{C}^{\nu}$    satisfying
     \begin{equation}
     	\label{gode_finite}
    | \mathcal D_{b_N} \cdots\mathcal D_{b_1} w(r)|\le  C\sum_{j=0}^{N-1}\frac{|w^{(j)}(r)|}{r^{N  -j}},
   \ \ r\in   (0, r_0)
\end{equation}
and   $$w^{(j)}(0)=0, \ \ j=0, \ldots,  N-1$$ must necessarily vanish identically.

\end{thm}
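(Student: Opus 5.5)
The plan is to invert the factorized operator one factor at a time, using the vanishing of the jets at $0$ to fix the integration constants, and so bound every lower-order quantity by $F:=\mathcal D_{b_N}\cdots\mathcal D_{b_1}w$. Set $w_0:=w$ and $w_k:=\mathcal D_{b_k}w_{k-1}$, $k=1,\dots,N$, so that $w_N=F$. Each $w_k$ is a combination $\sum_{i\le k} \beta_{k,i} r^{-(k-i)}w^{(i)}$ with constant coefficients. Since $w\in C^N$ and $w^{(j)}(0)=0$ for $j\le N-1$, Taylor's theorem with little-$o$ remainder gives $w^{(i)}(r)=o(r^{N-i})$ as $r\to0^+$ for $i\le N$. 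Hence $w_k(r)=o(r^{N-k})$, and in particular $r^{b_k}w_{k-1}(r)\to 0$ whenever $\mathrm{Re}(b_k)+N-k+1\ge 0$. This is exactly what \eqref{bjN} guarantees, since it reads $\mathrm{Re}(b_k)>k-N-1$.

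\textbf{Step 1 (inversion).} From $\frac{d}{dr}(r^{b_k}w_{k-1})=r^{b_k}w_k$ and the vanishing of $r^{b_k}w_{k-1}$ at $0$ we get
\[
w_{k-1}(r)=r^{-b_k}\int_0^r s^{b_k}w_k(s)\,ds .
\]
Define $\Phi_k(r):=\sup_{0<s\le r} s^{-(N-k)}|w_k(s)|$, which is finite and tends to $0$ as $r\to0$ by the $o$-estimate. Then
\[
|w_{k-1}(r)|\le r^{-\mathrm{Re} b_k}\,\Phi_k(r)\int_0^r s^{\mathrm{Re} b_k+N-k}\,ds=\frac{r^{N-k+1}}{\mathrm{Re} b_k+N-k+1}\,\Phi_k(r),
\]
where the integral converges precisely because of \eqref{bjN}. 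Iterating from $k=N$ down to $k=1$ yields $\Phi_{k}(r)\le A\,\Phi_N(r)$ for every $k$, with $\Phi_N(r)=\sup_{s\le r}|F(s)|$ and $A$ depending only on $b$.

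\textbf{Step 2 (recovering derivatives).} Next I express each $w^{(j)}$, $j\le N-1$, through the $w_k$. Unwinding the definition gives $w^{(j)}=w_j-\sum_{i<j}\gamma_{j,i}r^{-(j-i)}w^{(i)}$, and by induction
\[
\frac{|w^{(j)}(r)|}{r^{N-j}}\le A'\sum_{k\le j}\frac{|w_k(r)|}{r^{N-k}}\le A''\,\Phi_N(r).
\]
Plugging this into \eqref{gode_finite} gives $|F(s)|\le C N A''\Phi_N(s)\le CNA''\Phi_N(r)$ for all $s\le r$. Taking the supremum over $s\le r$ yields $\Phi_N(r)\le CNA''\,\Phi_N(r)$.

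\textbf{Step 3 (conclusion).} Choose the constant in the statement as $C<1/(NA'')$; this depends only on $(b_1,\dots,b_N)$. Then $\Phi_N(r)=0$ for every $r<r_0$, so $F\equiv0$ on $(0,r_0)$. Step 1 then gives $w_{k}\equiv0$ for all $k$, hence $w\equiv 0$, with no separate continuation argument needed. This yields Theorem \ref{gronw}.

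For Theorem \ref{gron}, pick $r_1$ with $h<C$ on $(0,r_1)$ to get $w\equiv0$ on $[0,r_1)$. On $[r_1,r_0)$ the weights are bounded and $w$ vanishes to order $N-1$ at $r_1$, so standard ODE uniqueness (Gronwall), or Corollary 1.3 of \cite{PW2} as in the proof of Theorem \ref{ee}, finishes the argument.

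The main obstacle is Step 1: justifying the integral representation with zero constant, which needs the precise $o(r^{N-k})$ decay of $w_k$ together with the integrability threshold. This is exactly where \eqref{bjN} enters, and it fails for $\mathrm{Re}(b_k)\le k-N-1$, matching the pattern of Example \ref{ex2}. One must also check that the sup-quantities $\Phi_k$ are finite, which again follows from the little-$o$ Taylor bounds near $0$ and continuity away from $0$.
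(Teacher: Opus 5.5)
Your argument is correct and is essentially the paper's proof: you invert $\mathcal D_{b_N}\cdots\mathcal D_{b_1}$ by the iterated integral $w_{k-1}=r^{-b_k}\int_0^r s^{b_k}w_k\,ds$, bound each $|w^{(j)}|/r^{N-j}$ by $\sup_{(0,r]}|\mathcal D_{b_N}\cdots\mathcal D_{b_1}w|$, and absorb using the smallness of $C$; recovering $w^{(j)}$ from the triangular relation with the $w_k$ is just a tidier version of the paper's differentiated formula \eqref{wj}. One correction: Taylor's theorem only gives $w^{(i)}(r)=O(r^{N-i})$ for $i\le N$, not $o(r^{N-i})$, because $w^{(N)}(0)$ need not vanish, so $\Phi_k$ is merely bounded rather than tending to $0$; this is harmless, since \eqref{bjN} is strict, so $r^{b_k}w_{k-1}(r)=O(r^{\mathrm{Re}(b_k)+N-k+1})\to 0$ and all the $\Phi_k$ remain finite.
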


\begin{proof} 	We begin by considering the case $N=1$, where $w$ satisfies the differential inequality
\begin{equation*}
	\left| \mathcal{D}_{b_1}w(r) \right| \le C \frac{|w(r)|}{r}.
\end{equation*}
Let $g := \mathcal{D}_{b_1} w$. By definition, we have
\begin{equation}
	\label{g1}
	g(r)  = r^{-b_1} \frac{d}{dr} \left( r^{b_1} w(r) \right) = w'(r)+ \frac{b_1}{r} w(r).
	\end{equation}
In particular, $g\in C((0, r_0))$ and the original inequality becomes
	\begin{equation}
		\label{n1}
		|g(r)| \le \frac{C|w(r)|}{r}.
		\end{equation}
		
		\noindent
		Given that $w(0)=0$ and $w \in C^1([0, r_0))$, applying L'H\^opital's rule and setting $ g(0) = (1+b_1)w'(0)$   extends      $g$ continuously to the origin.  Consequently, the maximum function
		\begin{equation*}
			M(r) := \max_{s \in [0, r]} |g(s)|
			\end{equation*}
			is well-defined and finite, and monotone non-decreasing for all $r \in [0, r_0)$.

            The condition $\mathrm{Re}(b_1) > -1$ ensures that $$\lim_{r \to 0^+} r^{b_1} w(r) = 0.$$ In fact this follows immediately from the continuity of $w$ if $\mathrm{Re}(b_1) \ge 0$, or from L'H\^opital's rule  if $-1 < \mathrm{Re}(b_1) < 0$. Multiplying both sides of \eqref{g1} by $r^{b_1}$ and integrating it from $0$ to $r$ then yields
			\begin{equation*}
				w(r) = r^{-b_1} \int_0^r s^{b_1} g(s) \, ds.
				\end{equation*}
				
	\noindent
	Using the monotonicity of $M(r)$ and the fact that $\mathrm{Re}(b_1) > -1$, one gets
	\begin{align}
	\label{w11}
		|w(r)| &\le r^{-\mathrm{Re}(b_1)} \int_0^r s^{\mathrm{Re}(b_1)} |g(s)| \, ds     \le M(r) r^{-\mathrm{Re}(b_1)} \int_0^r s^{\mathrm{Re}(b_1)} \, ds   = \frac{r M(r)}{\mathrm{Re}(b_1) + 1}.
	\end{align}
					
		\noindent
		Substituting this estimate back into \eqref{n1}, we obtain
			\begin{equation*}
				|g(r)| \le \frac{C M(r)}{ \mathrm{Re}(b_1) + 1 }  , \quad r \in [0, r_0).
		\end{equation*}
	Now, for each $r_1<r_0$, take the maximum on both sides over $[0, r_1]$ and make use of the monotonicity of $M(r)$ to get
	\begin{equation*}
		M(r_1) \le \frac{C M(r_1)}{ \mathrm{Re}(b_1) + 1 }.
	\end{equation*}
If \begin{equation}\label{C1}
    C<\mathrm{Re}(b_1) + 1,
\end{equation} this would be impossible unless $M(r_1)\equiv 0$. It follows from \eqref{w11} and the arbitrariness of $r_1<r_0$ that $w \equiv 0$ on $[0, r_0)$.

\vspace{.05in}
In the case $N=2$, the inequality is given by
\begin{equation*}
\left|\mathcal D_{b_2} \mathcal D_{b_1} w(r)\right|\le C\left(\frac{|w(r)|}{r^{2}}+\frac{|w'(r)|}{r}\right).
\end{equation*}

\vspace{.05in}
\noindent
Let $g: =  \mathcal D_{b_2} \mathcal D_{b_1} w$. The inequality then becomes
\begin{equation}
		\label{eqn:m=2_first_integration}
	\left | g(r) \right | \leq C\left(\frac{|w(r)|}{r^{2}}+\frac{|w'(r)|}{r}\right).
\end{equation}
 By the definition of the operators, $g(r)$ can be expressed as
\begin{equation}
	\label{g2}
	g(r) = r^{-b_2}\frac{d}{dr}\left ( r^{b_2-b_1} \frac{d}{dr}\left(r^{b_1} w(r)  \right)   \right ),\ \  r\in (0, r_0).
\end{equation}
Given $w(0) = w'(0) = 0$ and $w \in C^2([0, r_0))$, $g$ admits a continuous extension to the origin, so $g \in C([0, r_0))$.

Under the conditions $\mathrm{Re}(b_1) > -2$ and $\mathrm{Re}(b_2) > -1$,  it follows that
$$ \lim_{r\to 0^+} r^{b_1} w(r)=0 $$
and
$$ \lim_{r\to 0^+} r^{b_2-b_1} \frac{d}{dr}\left(r^{b_1} w(r)  \right) =\lim_{r\to 0^+} r^{b_2}\left( b_1\frac{w(r)}{r}+w'(r)\right)= 0. $$

\noindent
Thus, we can multiply both sides of \eqref{g2} by $r^{b_2}$ and perform successive integration from $0$ to $r$ to obtain
\begin{align*}
	w(r) &= r^{-b_1} \int_0^r s_2^{b_1-b_2} \int_0^{s_2} s_1^{b_2} g(s_1) ds_1 ds_2.
\end{align*}
Differentiating  the above further gives
\begin{equation*}
    \begin{split}
        w'(r) = -b_1 r^{-b_1-1}\int_0^rs_2^{b_1-b_2}\int_0^{s_2} s_1^{b_2} g(s_1) \,ds_1 \, ds_2 + r^{-b_2}\int_0^r s_1^{b_2} g(s_1) \,ds_1.
    \end{split}
\end{equation*}

Let $$M(r) := \max_{s \in [0, r]} |g(s)|.$$ Using the monotonicity of $M(r)$, we estimate $|w(r)|$:
\begin{align}
	\label{wb}
        |w(r)| & \le M(r) r^{-{\mathrm{Re}(b_1)}}\int_0^rs_2^{{\mathrm{Re}(b_1)}-{\mathrm{Re}(b_2)}}\int_0^{s_2} s_1^{{\mathrm{Re}(b_2)}}  \,ds_1 \, ds_2  \nonumber \\
        &   = \frac{ r^2M(r)}{\left ({\mathrm{Re}(b_2)}+1 \right )({\mathrm{Re}(b_1)}+2)}.
    \end{align}
   Similarly, for the derivative $|w'(r)|$:
    \begin{align*}
        |w'(r)| & \le   \frac{|b_1|M(r)}{{\mathrm{Re}(b_2)}+1}r^{-{\mathrm{Re}(b_1)}-1}\int_0^r s_2^{\mathrm{Re}(b_1)+1} \, ds_2  + \frac{rM(r)}{ {\mathrm{Re}(b_2)}+1 } \nonumber \\
        & = \left(\frac{|b_1|}{({\mathrm{Re}(b_2)}+1)({\mathrm{Re}(b_1)}+2)}  + \frac{1}{ {\mathrm{Re}(b_2)}+1 }\right)rM(r).
\end{align*}

\noindent
Substituting these estimates into \eqref{eqn:m=2_first_integration}, the inequality reduces to:
\begin{align*}
        |g(r)| \le C \left(\frac{|b_1|+3 + {\mathrm{Re}(b_1)}}{({\mathrm{Re}(b_2)}+1)({\mathrm{Re}(b_1)}+2)}    \right) M(r), \quad \ r\in(0, r_0).
\end{align*}
As before, for each $r_1<r_0$, taking the maximum over $[0, r_1]$ yields
$$ M(r_1)\le  C \left(\frac{|b_1|+3 + {\mathrm{Re}(b_1)}}{({\mathrm{Re}(b_2)}+1)({\mathrm{Re}(b_1)}+2)}    \right) M(r_1). $$
If \begin{equation}\label{C2}
    C< \frac{({\mathrm{Re}(b_2)}+1)({\mathrm{Re}(b_1)}+2)} {|b_1|+3 + {\mathrm{Re}(b_1)}} ,
\end{equation} then $M(r_1) = 0$, which in turn forces $g \equiv 0$  on $[0, r_1)$.  Consequently, by \eqref{wb} we have $w \equiv 0$ on $[0, r_1]$.

\vspace{.05in}
For the general case $N \geq 3$, define
\begin{align}
	\label{eqn:gN}
	g(r) &:= \mathcal{D}_{b_N} \cdots \mathcal{D}_{b_1} w(r) \nonumber \\
	&=  r^{-b_N} \frac{d}{dr} \left ( r^{b_N-b_{N-1}} \frac{d}{dr} \left(  r^{b_{N-1}-b_{N-2}}   \frac{d}{dr} \left (\cdots r^{b_2-b_1} \frac{d}{dr}\left(r^{b_1} w(r) \right)   \right )\cdots \right)  \right).
\end{align}

\noindent
 The inequality is given by
\begin{equation}
	\label{wn}
         \left| g(r)  \right|
\le  C\sum_{j=0}^{N-1}\frac{|w^{(j)}(r)|}{r^{N  -j}}.
\end{equation}

\noindent
Given the vanishing conditions $w^{(k)}(0) = 0$ for $k = 0, \dots, N-1$ and the constraints \eqref{bjN} on $b_j$, a recursive application of the L'H\^opital's rule ensures that the following limits vanish at the origin:
$$ \lim_{r \to 0^+} r^{b_{j}-b_{j-1}} \frac{d}{dr} \left (      r^{b_{j-1}-b_{j-2}}\frac{d}{dr} \left (\cdots  r^{b_2-b_1} \frac{d}{dr}\left(r^{b_1} w(r)   \right)   \right )\cdots \right ) =0 \quad \text{for} \,\, j=1, \dots, N. $$
Thus performing $N$ successive integration on \eqref{eqn:gN} yields the integral representation of $w(r)$:
\begin{align}\label{we}
   		w(r)
		 =   r^{-b_1} \int_0^r   s_N^{b_{1}-b_{2}}	 \int_0^{s_{N}} s_{N-1}^{b_{2}-b_{3}} \cdots \int_0^{s_3}s_2^{b_{N-1}-b_N}\int_0^{s_2} s_1^{b_N}g(s_1)\, ds_1 \,ds_2 \cdots ds_{N}.
\end{align}
Differentiating this expression inductively, the $j$-th order derivative   satisfies
\begin{equation}\label{wj}
    \begin{split}
         w^{(j)}(r) = &r^{-b_{j+1}}
\int_0^{r} s_{N-j}^{\,b_{j+1}-b_{j+2}}
\int_0^{s_{N-j}} s_{N-j-1}^{\,b_{j+2}-b_{j+3}}
\cdots
\int_0^{s_2} s_1^{b_N} g(s_1)\, ds_1\, ds_2 \cdots ds_{N-j} \\
&+ \sum_{m=0}^{j-1}
\frac{C_{j,m}}{r^{\,j-m}}\, w^{(m)}(r),\ \ j=1, \ldots, N-1,
    \end{split}
\end{equation}
  where the constants $C_{j,m} $ are  dependent only on  $(b_1, \ldots, b_N)$.

  Let $$M(r) := \max_{s \in [0, r]} |g(s)|.$$
Utilizing the monotonicity of $M(r)$, we obtain from \eqref{we} that
\allowdisplaybreaks
\begin{align}
	\label{es}
            |w(r)| \leq & M(r)  r^{-\mathrm{Re}(b_1)} \int_0^r   s_N^{\mathrm{Re}(b_{1})-\mathrm{Re}(b_{2})}	 \cdots \int_0^{s_3}s_2^{\mathrm{Re}(b_{N-1})-\mathrm{Re}(b_N)}\int_0^{s_2} s_1^{\mathrm{Re}(b_N)}\, ds_1 \,ds_2 \cdots ds_{N} \nonumber \\
             = &\frac{M(r)}{\mathrm{Re}(b_N)+1}  r^{-\mathrm{Re}(b_1)} \int_0^r   s_N^{-\mathrm{Re}(b_{1})-\mathrm{Re}(b_{2})}	 \cdots \int_0^{s_3}s_2^{\mathrm{Re}(b_{N-1})-\mathrm{Re}(b_N)+\mathrm{Re}(b_N)+1} \,ds_2 \cdots ds_{N} \nonumber \\
            &  \vdots \nonumber  \\
             =& \frac{r^NM(r)}{\prod_{j=1}^N\left(\mathrm{Re}(b_j)+N-j+1\right)}.
    \end{align}

\vspace{.05in}
    \noindent
 Similarly, through induction on $j$, it follows that there exist constants $C_j > 0$ dependent only on $(b_1, \ldots, b_N)$ such that
    \begin{equation}
    	\label{wjn}
        |w^{(j)}(r)|\le C_j r^{N-j}M(r), \quad j=0, \ldots, N-1 .
    \end{equation}

\noindent
Substituting the estimates  \eqref{wjn} into the right-hand side of \eqref{wn}, the inequality reduces to
$$ |g(r)|\leq C \left (\sum_{j=0}^{N-1}C_j \right )  M(r).$$
Finally, taking the maximum over $[0, r_1]$ for each $r_1<r_0$, we find
\begin{equation*}
	M(r_1) \leq C \left (\sum_{j=0}^{N-1}C_j \right )   M(r_1).
\end{equation*}
If \begin{equation}\label{CN}
    C <\frac{1}{ \sum_{j=0}^{N-1}C_j    },
\end{equation} then
  $M(r_1) = 0$ necessarily. Consequently, $w \equiv 0$ on the interval $[0, r_1]$. The proof is complete.

\end{proof}

\noindent
\textbf{Remark}:
  As demonstrated in the proof, the upper bound for the constant $C$ in Theorem \ref{gronw} can be explicitly determined in terms of the parameters $b_j$,  $j=1, \ldots, N$. The specific value for $C$ is given in \eqref{C1} for $N=1$, \eqref{C2} for $N=2$, and \eqref{CN} for the general cases $N \ge 3$.

\vspace{.1in}

\begin{proof}[\textbf{Proof of Theorem \ref{gron}}: ] The proof follows from Theorem \ref{gronw} by an argument similar to that used in Theorem \ref{ee}, and is therefore omitted.

\end{proof}

Using a construction similar to that in Example \ref{ex1}, the following example shows that condition \eqref{bjN} is necessary for Theorem \ref{gron} to hold.

\begin{example}
	\label{ex6}
	\upshape
Let $(b_1, \dots, b_N) \in \mathbb{C}^N$ be such that $$j_0 - \mathrm{Re}(b_{j_0}) - N =1 +\alpha$$  for some $j_0 \in \{ 1, \dots, N \}$ and   $\alpha \geq 0$. Let $w(r) = r^l$, where $$l: = j_0 - b_{j_0} - 1.$$ Then our assumption implies that  $ \mathrm{Re}(l) = j_0 - \mathrm{Re}(b_{j_0}) - 1 = N + \alpha\ge N.$  Therefore $w \in C^{N}([0,1])$ and   $$w^{(j)}(0) = 0, \quad  j = 0, \dots, N-1.$$ Moreover, by a direct computation,
\begin{equation*}
	\mathcal{D}_{b_{j_0}} \dots \mathcal{D}_{b_1} w(r) = (l + b_1)  \dots \big( l - (j_0-1) + b_{j_0} \big) r^{l-j_0} = 0.
\end{equation*}
 Thus, inequality \eqref{gode1} is satisfied with $h(r)\equiv 0$. This example shows that condition \eqref{bjN} is necessary.
\end{example}

\begin{example}
	\label{remark:Re_bj_geq_0}
\upshape	One readily checks that assumption \eqref{bjN} is satisfied provided
	$\mathrm{Re}(b_j) >-1$ for all $j=1,\dots,N$.
 In particular, this holds for the radial component of the classic Laplace operator $\Delta$ in $\mathbb{R}^n$. Indeed, if $w$ is a radial function, $\Delta w$ reduces to the following form in the radial variable $r$:
 $$w'' +\frac{n-1}{r} w' = \mathcal D_{n-1}\mathcal D_{0} w, $$where $b_1 = 0$ and $b_2 = n-1$. Similarly, if $w$ is radial, the  operator $\Delta^m$ can be written as
 $$\Delta ^m w = \underbrace{
 	(\mathcal D_{n-1}\mathcal D_0)  \cdots
 	(\mathcal D_{n-1}\mathcal D_0)
 }_{m\, \text{times}}w = \left(\mathcal D_{n-1}\mathcal D_0\right)^m w, $$
so condition \eqref{bjN} is also satisfied.

\end{example}

Analogous to Corollary \ref{eenl}, the following uniqueness result holds for $C^N$ solutions  to a class of linear singular systems with nonconstant coefficients. The proof is similar and is therefore omitted. For the statement of this result, we define
\begin{equation}\label{B_2}
   I_2: =  \{(b_1, \ldots, b_N)\in \mathbb C^N: j -  \mathrm{Re} (b_j)  - N <1, j = 1, \ldots, N\}.
\end{equation}

\begin{cor}\label{eenl2}
    Let $r_0>0$ and $N\in \mathbb Z^+$.  Suppose that   $w = (w_1, \dots, w_{\nu}): [0, r_0) \to \mathbb{C}^{\nu}$ is $C^N$ on $[0, r_0)$ and       satisfies the differential system
\begin{equation*}
     	    	w^{(N)} + \frac{h_1(r)}{r}w^{(N-1)} + \cdots + \frac{h_N(r)}{r^N} w =0, \quad r \in (0, r_0)
\end{equation*}
    for some functions $h_j\in C([0, r_0)), j =1, \ldots, N$. Let $P_{h_1(0), \ldots, h_N(0)}$ and $ I_2$ be as defined in \eqref{P} and \eqref{B_2}, respectively. If
    $$ P_{h_1(0), \ldots, h_N(0)} \cap I_2\ne \emptyset$$
    and
     $$w^{(j)}(0) = 0, \quad   j = 0, \dots, N-1,$$
   then $ w \equiv 0 $ on $[0, r_0)$.
\end{cor}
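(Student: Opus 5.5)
The plan is to mirror the proof of Corollary \ref{eenl}, replacing Theorem \ref{ee} by Theorem \ref{gron}.

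First, I would fix a parameter vector $(b_1,\ldots,b_N)\in P_{h_1(0),\ldots,h_N(0)}\cap I_2$. Membership in $P_{h_1(0),\ldots,h_N(0)}$ gives the pointwise identity
\[
\mathcal{D}_{b_N}\cdots\mathcal{D}_{b_1} w = w^{(N)}+\frac{h_1(0)}{r}w^{(N-1)}+\cdots+\frac{h_N(0)}{r^N}w
\]
on $(0,r_0)$. Since Lemma \ref{ele2} is a purely algebraic identity, this holds componentwise for any $C^N$ vector-valued $w$.

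Next, I would subtract the given equation from this identity. This yields
\[
\bigl|\mathcal{D}_{b_N}\cdots\mathcal{D}_{b_1} w(r)\bigr| \le \sum_{j=0}^{N-1}|h_{N-j}(r)-h_{N-j}(0)|\,\frac{|w^{(j)}(r)|}{r^{N-j}}, \quad r\in(0,r_0).
\]
I would then set $h(r):=\max_{1\le k\le N}|h_k(r)-h_k(0)|$. This function is nonnegative, continuous on $[0,r_0)$, and satisfies $h(0)=0$. So $w$ satisfies \eqref{gode1} with this $h$.

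Finally, membership in $I_2$ is exactly condition \eqref{bjN}, and $w$ is $C^N$ with $w^{(j)}(0)=0$ for $j\le N-1$. All hypotheses of Theorem \ref{gron} therefore hold, and it gives $w\equiv 0$ on $[0,r_0)$.

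There is no real obstacle here; the work is all contained in Theorem \ref{gron}. The only point to check is that Theorem \ref{gron} allows $h$ to be merely continuous with $h(0)=0$, rather than requiring a small constant. This is true as stated, because its proof handles this case: it localizes to a small interval via Theorem \ref{gronw}, then extends to all of $[0,r_0)$ using the regular uniqueness away from $0$.
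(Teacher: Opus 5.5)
Your proposal is correct and is essentially the paper's own argument. The paper omits this proof as "similar" to that of Corollary~\ref{eenl}, and what you give is exactly that proof with Theorem~\ref{ee} replaced by Theorem~\ref{gron} and the smoothness-based condition $I_1$ replaced by $I_2$, i.e.\ condition \eqref{bjN}.
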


\vspace{.2in}

\section{Existence of solutions to Euler-type ODEs}
\label{section:iodea_proof}
In this section, we establish the existence  of $C^N$ solutions to the following initial value problem for a singular nonlinear system involving the operator  $\mathcal{D}_{b_N} \cdots \mathcal{D}_{b_1}$.

\begin{thm}
	\label{iodea}
	Let $r_0 > 0$, $N\in \mathbb Z^+$, and let $(b_1, \ldots, b_N) \in \mathbb{C}^N$ satisfy \eqref{bjN}. Define the set of indices $\Lambda$ as
	\begin{equation}
		\label{Lam}
		\Lambda = \{ -b_{j} + j - 1 : j = 1, \ldots, N \} \cap A,
	\end{equation}
	where $A := \{0, \ldots, N-1\}$. Given $\zeta_j \in \mathbb{C}^\nu$ for each $j \in \Lambda$, there exists a solution $u =(u_1, \ldots, u_{\nu}) \in C^N([0, r_0), \mathbb{C}^\nu)$ to the nonlinear differential system
	\begin{equation}\label{indicial}
		\left\{ \begin{aligned}
			& \mathcal D_{b_N} \cdots\mathcal D_{b_1} u(r) =  f(r, u, \ldots, u^{(N-1)}), \quad r\in (0, r_0);\\
			&u^{(j)}(0)= \zeta_j, \quad j\in  \Lambda;\\
			&u^{(j)}(0)= 0, \quad j\in A\setminus \Lambda,
		\end{aligned}\right.
	\end{equation}
	where the vector-valued function $f:= (f_1, \ldots, f_{\nu})$ is continuous with respect to all its arguments. If in addition $f$ is locally Lipschitz continuous in $(u, \ldots, u^{(N-1)})$, then the solution is unique.
\end{thm}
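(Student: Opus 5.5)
The plan is to split $u$ into an explicit homogeneous part carrying the prescribed jets and a correction found by a fixed-point argument for the integral operator that inverts $\mathcal D_{b_N}\cdots\mathcal D_{b_1}$, as in the proof of Theorem \ref{gronw}.

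\textbf{Step 1: the homogeneous part.} For $j\in\Lambda$ with $j=-b_{k}+k-1$, the monomial $r^{j}$ satisfies
\[
\mathcal D_{b_k}\cdots\mathcal D_{b_1} r^{j}=\prod_{i=1}^{k}(j-(i-1)+b_i)\,r^{j-k}=0,
\]
since the $i=k$ factor vanishes. This is the computation of Example \ref{ex1}. Hence $\mathcal D_{b_N}\cdots\mathcal D_{b_1}r^j=0$, and $P(r):=\sum_{j\in\Lambda}\frac{\zeta_j}{j!}r^j$ is a polynomial annihilated by the operator whose jets at $0$ up to order $N-1$ are exactly the prescribed ones.

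\textbf{Step 2: reduction to an integral equation.} Write $u=P+w$ with $w^{(j)}(0)=0$ for $j\le N-1$, so that
\[
\mathcal D_{b_N}\cdots\mathcal D_{b_1}w=f(r,P+w,\dots,P^{(N-1)}+w^{(N-1)}).
\]
Given $g\in C([0,r_1])$, define $w=Tg$ by the iterated integral \eqref{we}. Condition \eqref{bjN} makes every inner integral converge: each exponent after integration has real part greater than $-1$, exactly as in \eqref{es}. Formula \eqref{wj} and the bounds \eqref{wjn} give
\[
|(Tg)^{(j)}(r)|\le C_j r^{N-j}\max_{[0,r]}|g|,\qquad j=0,\dots,N-1.
\]
One must check that $Tg\in C^N([0,r_1])$ with vanishing jets up to order $N-1$, and that $\mathcal D_{b_N}\cdots\mathcal D_{b_1}Tg=g$. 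The identity holds by construction, differentiating the iterated integral layer by layer. For $C^N$ regularity, the $N$-th derivative is obtained by solving for $w^{(N)}$ in terms of $g$ and $w^{(j)}/r^{N-j}$, $j<N$. I expect the main obstacle here: one must show that each $r^{-(N-j)}(Tg)^{(j)}$ extends continuously to $0$. The estimate \eqref{wjn} only gives boundedness. For continuity, I would write $g=g(0)+(g-g(0))$. The constant part produces an explicit multiple of $r^N$, computed via the product formula. The remainder contributes $o(r^{N-j})$ by \eqref{wjn} applied on small intervals. Together these give continuity of $w^{(N)}$ at $0$.

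\textbf{Step 3: fixed point.} Seek $g$ with $g=F(g):=f(r,P+Tg,\dots,P^{(N-1)}+(Tg)^{(N-1)})$. If $f$ is only continuous, apply Schauder's theorem on the ball $\{\|g\|_{C[0,r_1]}\le R\}$, where $R$ exceeds the sup of $|f|$ near $(r,P,\dots,P^{(N-1)})$, and $r_1$ is small enough that $|(Tg)^{(j)}|\le C_jr_1^{N-j}R$ stays in that neighborhood. Compactness of $g\mapsto (Tg)^{(j)}$, $j\le N-1$, into $C[0,r_1]$ follows from equicontinuity: these maps are integral operators with locally integrable kernels away from $0$, combined with the uniform smallness near $0$. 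If $f$ is locally Lipschitz, $F$ is a contraction for small $r_1$ with constant $L\sum_j C_jr_1^{N-j}$; this gives existence near $0$.

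\textbf{Step 4: extension and uniqueness.} Extend the solution from $[0,r_1]$ to all of $[0,r_0)$ by the standard theory of regular ODEs, since on $[r_1,r_0)$ the operator is a nonsingular $N$-th order operator with continuous coefficients. (Growth of $f$ could cause blow-up, so this step tacitly needs either a local interpretation or linear-growth control; I would note this.) For uniqueness, the difference $v$ of two solutions has zero jets up to order $N-1$ and satisfies
\[
|\mathcal D_{b_N}\cdots\mathcal D_{b_1}v|\le L\sum_j|v^{(j)}|\le L r_0^{N}\sum_j |v^{(j)}|/r^{N-j}.
\]
Since $h(r)=Lr^N\to0$, Theorem \ref{gron} yields $v\equiv0$.
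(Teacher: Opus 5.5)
Your argument is correct and follows the paper's skeleton: the polynomial $P=\sum_{j\in\Lambda}\zeta_j r^j/j!$ from Lemma \ref{el}, the shift to homogeneous jets, the integral operator $\mathcal T$ of Lemma \ref{tb}, and uniqueness through Theorem \ref{gron}. The real difference is how existence is obtained.

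The paper (Theorem \ref{cont}) builds Tonelli-type delayed approximants $u_l$ by applying $\mathcal T$ to $f(\cdot,u_l,\dots,u_l^{(N-1)})$ with a lag $r_1/l$. It bounds them uniformly in $C^{N-1}$ with $u_l^{(N)}$ bounded, and passes to a limit by Arzel\`a--Ascoli. You instead pose the fixed-point problem $g=F(g)$ in $C([0,r_1])$ and apply Schauder's theorem. Your route is shorter and avoids the bookkeeping of the lagged scheme, at the cost of a heavier topological theorem.

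Your compactness justification is loose; here is a cleaner way to close it. Write
\[
(\mathcal Tg)^{(N)}=g-\sum_{j<N}\kappa_j\,\frac{(\mathcal Tg)^{(j)}}{r^{N-j}},
\]
where $\kappa_j$ are the Euler coefficients of $\mathcal D_{b_N}\cdots\mathcal D_{b_1}$. Together with \eqref{wjn} this gives \eqref{Tb2}. Hence $\{(\mathcal Tg)^{(j)}\}_{j\le N-1}$ is uniformly Lipschitz on your ball, and uniform continuity of $f$ on the relevant compact set finishes the argument.

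Your regularity argument at $0$ is also leaner than the paper's. Once every $r^{-(N-j)}(\mathcal Tg)^{(j)}$ has a limit at $0$, so does $(\mathcal Tg)^{(N)}$. The mean value theorem then makes $(\mathcal Tg)^{(N-1)}$ differentiable at $0$ with that limit as its derivative. So the algebraic identity \eqref{eq} that the paper verifies by hand is automatic.

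Two further points.

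\textbf{The Step 4 caveat is a real defect in the statement.} Take $N=1$, $b_1=0$ (which satisfies \eqref{bjN}, with $\Lambda=\{0\}$), $f=1+u^2$, $\zeta_0=0$ and $r_0=2$. The unique solution $\tan r$ blows up at $\pi/2$. So the paper's appeal to the standard continuation theorem only gives existence on some $[0,r_1)$ unless the growth of $f$ is controlled. This also affects Theorem \ref{cont} as stated, not just your write-up.

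\textbf{The weight in the uniqueness step is wrong.} $h(r)=Lr^N$ does not work, since $r^{N-j}\ge r^N$ for $r<1$. Writing $|v^{(j)}|=r^{N-j}\,|v^{(j)}|/r^{N-j}$ gives instead $h(r)=L\sum_{j=0}^{N-1}r^{N-j}$. The paper uses $Lr$ for $r<1$. This $h$ still vanishes at $0$, so Theorem \ref{gron} applies and your conclusion stands.
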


 Due to the singular nature of the operator $ \mathcal D_{b_N} \cdots\mathcal D_{b_1}$ at the origin, one might typically expect homogeneous initial conditions at $r=0$ to be necessary to compensate for the singular behavior. A notable aspect of our existence result Theorem \ref{iodea} is that the initial jets may be freely prescribed whenever the corresponding index belongs to $\Lambda$.   As demonstrated in the lemma below,   the indices in $\Lambda$ are precisely the integer indicial roots of the operator $\mathcal{D}_{b_N} \cdots \mathcal{D}_{b_1}$ that lie in the range $\{0, \dots, N-1\}$.

\begin{lemma}
	\label{el}
    For each $\lambda\in A$, the power function $r^\lambda$ satisfies the homogeneous Euler initial value problem
    \begin{equation*}
    	\left\{
    	\begin{aligned}
          & \mathcal D_{b_N} \cdots\mathcal D_{b_1} u   = 0,    \quad  r\in   (0, r_0);\\
   &u^{(\lambda)}(0)  = \lambda !;   \\
    &u^{(j)}(0)  = 0, \quad j\in A\setminus \{\lambda\}
            \end{aligned}
            \right.
            \end{equation*}
if and only if $\lambda\in \Lambda$.
\end{lemma}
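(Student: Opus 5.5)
The plan is a direct computation of the action of the factored operator on a power. For any $\mu\in\mathbb C$ and $b\in\mathbb C$ we have $\mathcal D_b r^\mu = (\mu+b) r^{\mu-1}$. Iterating this identity gives
\[
\mathcal D_{b_N}\cdots\mathcal D_{b_1} r^\lambda=\prod_{j=1}^N\bigl(\lambda-(j-1)+b_j\bigr)\, r^{\lambda-N},\qquad r\in(0,r_0),
\]
exactly as in the computation of Example~\ref{ex1}. The argument has three steps: check the initial conditions, show sufficiency, then show necessity.

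\emph{Initial conditions.} Let $\lambda\in A=\{0,\dots,N-1\}$. Then $u=r^\lambda$ is a polynomial, hence smooth on $[0,r_0)$, with $u^{(\lambda)}(0)=\lambda!$ and $u^{(j)}(0)=0$ for all $j\neq\lambda$. So the two initial conditions hold automatically for every $\lambda\in A$. The statement therefore reduces to showing that the equation $\mathcal D_{b_N}\cdots\mathcal D_{b_1}r^\lambda=0$ on $(0,r_0)$ holds if and only if $\lambda\in\Lambda$.

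\emph{Sufficiency.} Suppose $\lambda\in\Lambda$. Then $\lambda=-b_{j_0}+j_0-1$ for some $j_0$, so the factor $\lambda-(j_0-1)+b_{j_0}$ vanishes. Hence the product is zero and $r^\lambda$ solves the homogeneous equation.

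\emph{Necessity.} Suppose $\lambda\in A\setminus\Lambda$. Then $\lambda\neq -b_j+j-1$ for every $j$, so each factor $\lambda-(j-1)+b_j$ is nonzero and the product is a nonzero constant. Since $r^{\lambda-N}\neq0$ on $(0,r_0)$, the operator does not annihilate $r^\lambda$.

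The only point needing care is to check the product formula by induction on the number of factors. After applying $\mathcal D_{b_1},\dots,\mathcal D_{b_k}$ one has $c_k r^{\lambda-k}$ with $c_k=\prod_{j\le k}(\lambda-(j-1)+b_j)$. Applying $\mathcal D_{b_{k+1}}$ then multiplies by $(\lambda-k+b_{k+1})$ and lowers the exponent by one. This holds also for noninteger or negative exponents $\lambda-k$, since $\frac{d}{dr}r^{\mu}=\mu r^{\mu-1}$ for the principal branch on $r>0$. When some $c_k$ already vanishes, all subsequent terms are zero, which is consistent with the formula. I expect no real obstacle; the condition \eqref{bjN} is not even needed for this lemma.
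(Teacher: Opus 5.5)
Your proposal is correct and matches the paper's proof: the paper also observes that the initial conditions hold automatically for $\lambda\in A$. It then computes $\mathcal{D}_{b_N}\cdots\mathcal{D}_{b_1}(r^\lambda)=\prod_{j=1}^N(\lambda-(j-1)+b_j)\,r^{\lambda-N}$ and concludes that $r^\lambda$ is annihilated iff $\lambda\in\{j-1-b_j\}$, i.e.\ iff $\lambda\in\Lambda$. Your induction on the number of factors simply writes out the computation the paper calls straightforward.
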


\begin{proof}
	For each $\lambda\in A$, $r^\lambda$ satisfies the initial conditions.  To determine if $r^\lambda$ is a solution, we substitute $u = r^\lambda$ into the homogeneous equation. A straightforward computation yields
	$$\mathcal{D}_{b_N} \cdots \mathcal{D}_{b_1} (r^\lambda) = \left[ \prod_{j=1}^N (\lambda - (j-1) + b_j) \right] r^{\lambda-N}.$$

\noindent
Consequently, $r^\lambda$ satisfies the homogeneous equation if and only if
$$\prod_{j=1}^N (\lambda - (j-1) + b_j) = 0,$$
i.e., if and only if $\lambda \in \{j - 1 - b_j : j = 1, \ldots, N\}$.  This completes the proof.

\end{proof}

 While the initial jets corresponding to indices in $\Lambda$ may be prescribed arbitrarily, as stated in Theorem \ref{iodea},  the remaining initial jets must vanish to ensure the existence of a solution, as explained below.

\begin{remark}
	\label{re}
	\upshape
	   The jets corresponding to indices not in $\Lambda$ must vanish in order to ensure compatibility and guarantee  the existence of a $C^N$ solution to \eqref{indicial}. Indeed, let $j_0$ be the smallest index in $A\setminus \Lambda$ such that the $j_0$-th jet of the solution in the initial condition of \eqref{indicial} is nonzero. Then  for any $C^N$ solution $u$ to the initial value problem,  according to the proof of Lemma \ref{el}  there exists a constant $c>0$ such that  
    $$  |\mathcal D_{b_N} \cdots\mathcal D_{b_1} u| = c r^{j_0-N} +o(r^{j_0-N} ), \ \ r<<1,$$
    which diverges to infinity as $r\rightarrow 0^+$ since $j_0\le N-1$.
     On the other hand,  the right-hand side $ | f(r, u, \ldots, u^{(N-1)}) |$ remains bounded near $0$, which leads to a contradiction. This argument does not apply to indices in $ \Lambda$, since any potential singular contribution arising from      jets of such order is annihilated by  $\mathcal D_{b_N} \cdots\mathcal D_{b_1}$.
\end{remark}

To prove Theorem \ref{iodea}, we first study the existence of $C^N([0, r_0))$ solutions to the  system with homogeneous initial conditions:
 \begin{equation}
 	\label{hom}
 	\left\{ \begin{aligned}
		&            \mathcal D_{b_N} \cdots\mathcal D_{b_1} u(r) =  f(r, u, \ldots, u^{(N-1)}),
		\quad  r\in (0, r_0)\\
		&u(0)= u'(0)= \ldots =u^{(N-1)}(0) =0.
	\end{aligned}\right.
\end{equation}

\vspace{.05in}

\begin{thm}
	\label{cont}
	Let $r_0 > 0$, $N\in \mathbb Z^+$, and let $(b_1, \dots, b_N) \in \mathbb{C}^N$ satisfy condition \eqref{bjN}. Suppose that  $f$ is a vector-valued function that is continuous with respect to all its variables. Then there exists a $C^N([0, r_0))$ solution to the nonlinear differential system \eqref{hom}.\end{thm}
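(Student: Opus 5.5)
We recast \eqref{hom} as a fixed-point problem for the top quantity $g:=\mathcal D_{b_N}\cdots\mathcal D_{b_1}u$ and use the Schauder fixed point theorem. The basic tool is the integral representation \eqref{we} from the proof of Theorem \ref{gronw}. For $g\in C([0,r_1])$, define
\begin{equation*}
T g(r):= r^{-b_1} \int_0^r s_N^{b_{1}-b_{2}} \int_0^{s_{N}} s_{N-1}^{b_{2}-b_{3}} \cdots \int_0^{s_2} s_1^{b_N}g(s_1)\, ds_1 \cdots ds_{N}.
\end{equation*}
Condition \eqref{bjN} makes every iterated integral converge absolutely: at each stage the integrand has exponent with real part $>-1$, exactly as in the computation \eqref{es}. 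We will verify the following.
\begin{itemize}
\item $u=Tg$ is $C^N$ on $(0,r_1]$ and satisfies $\mathcal D_{b_N}\cdots\mathcal D_{b_1}u=g$ there.
\item The estimates \eqref{wjn} hold: $|u^{(j)}(r)|\le C_j r^{N-j}\|g\|_\infty$ for $j=0,\ldots,N-1$.
\item Hence $u^{(j)}(0)=0$ for $j\le N-1$, once we check that $u$ extends to a $C^{N-1}$ function at $0$.
\end{itemize}
The problem \eqref{hom} then becomes $g=F(g):=f(\cdot,Tg,(Tg)',\ldots,(Tg)^{(N-1)})$.

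The key step, and the main obstacle, is that $Tg\in C^N([0,r_1])$ and that $(Tg)^{(j)}$ is continuous up to $0$ for all $j\le N$. Continuity on $(0,r_1]$ is clear; the difficulty is the origin. We plan to split $g=g(0)+(g-g(0))$.
\begin{itemize}
\item For the constant part, $T[1]=\kappa r^N$ with $\kappa=1/\prod_{j}(\mathrm{Re}\to b_j+N-j+1)$, i.e. $\kappa=1/\prod_{j=1}^N(b_j+N-j+1)$. This denominator is nonzero under \eqref{bjN}, and $r^N$ is smooth.
\item For $\tilde g=g-g(0)$, which is $o(1)$ at $0$, we use the formula \eqref{wj} for $u^{(j)}$, including the case $j=N$. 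Each term is a weighted iterated integral bounded by $C r^{N-j}\sup_{[0,r]}|\tilde g|$, so $u^{(j)}(r)\to0$ for $j\le N-1$ and $u^{(N)}(r)\to 0$.
\end{itemize}
A mean-value/L'H\^opital argument then upgrades this to $u\in C^N([0,r_1])$ with $u^{(N)}(0)=N!\,\kappa g(0)$. The same bounds show that $g\mapsto (Tg,\ldots,(Tg)^{(N-1)})$ is a bounded linear map $C([0,r_1])\to C([0,r_1])^N$ with norm $\le C' r_1$. It is also compact, since its image consists of functions with derivatives controlled by the $C^N$ bounds, so Arzel\`a--Ascoli applies.

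With this in hand, the Schauder step proceeds as follows. Fix $R>0$ and let $K=\sup|f|$ over $[0,r_0/2]\times\{|z_j|\le 1\}$. Choose $r_1\le r_0/2$ small so that $C' r_1 K\le 1$. Then $F$ maps the closed ball $\{\|g\|_\infty\le K\}$ continuously into itself, continuity following from uniform continuity of $f$ on compacts, and $F$ is compact. Schauder therefore gives a fixed point, hence a $C^N([0,r_1])$ solution.

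To reach all of $[0,r_0)$, note that away from the origin the operator $\mathcal D_{b_N}\cdots\mathcal D_{b_1}$ is a regular $N$-th order ODE with continuous coefficients, solvable for $u^{(N)}$. We extend by Peano's theorem, continuing the solution step by step. If blow-up is a concern, we will first replace $f$ by a bounded continuous truncation. Since the statement imposes no growth hypothesis on $f$, we expect that global existence on all of $[0,r_0)$ may require this truncation or may hold only locally; we would handle this by noting that for a bounded $f$ the a priori bounds from $T$ hold on the whole interval, so Schauder applies directly with $r_1$ arbitrary, using a weighted sup-norm $\sup e^{-Lr}|g|$ to restore contractive-type smallness.
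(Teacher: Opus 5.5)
Your local existence argument is correct, and it takes a different route from the paper. The paper keeps $u$ as the unknown and uses a Carath\'eodory-type delayed scheme: $u_l\equiv0$ on $[0,r_1/l]$, then $u_l(r)=\mathcal T\bigl(f(\cdot,u_l,\ldots,u_l^{(N-1)})\bigr)(r-r_1/l)$. It extracts a convergent subsequence by Arzel\`a--Ascoli from the bounds \eqref{Tb1}--\eqref{Tb2}. You instead take $g=\mathcal D_{b_N}\cdots\mathcal D_{b_1}u$ as the unknown and apply Schauder's theorem to $F(g)=f(\cdot,Tg,\ldots,(Tg)^{(N-1)})$ on $\{\|g\|_\infty\le K\}$, with compactness coming from the same bounds. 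Both rest on the mapping properties of $\mathcal T$ in Lemma \ref{tb}, which you re-derive. The paper's scheme is elementary and constructive; yours is shorter once the fixed-point theorem is granted.

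Your treatment of regularity at the origin is also cleaner than the paper's. Since $T[1]=r^N/\prod_{j=1}^N(b_j+N-j+1)$ is explicitly smooth, and all derivatives of $T(g-g(0))$ up to order $N$ tend to $0$, the mean value theorem gives $Tg\in C^N([0,r_1])$ at once. The coefficient identity checked via \eqref{cm}--\eqref{eq} then becomes unnecessary.

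The step that does not go through is the passage from $[0,r_1]$ to $[0,r_0)$, and your hesitation there is justified: without a growth condition on $f$ the statement is false for large $r_0$. Take $N=1$, $b_1=0$ (so \eqref{bjN} holds) and $f(r,u)=1+u^2$. The problem becomes $u'=1+u^2$, $u(0)=0$, whose unique solution $\tan r$ admits no $C^1$ continuation past $\pi/2$.

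Truncating $f$ cannot repair this. A solution of the truncated problem solves the original one only while it stays where the truncation is inactive, which brings you back to a local statement. The weighted norm $e^{-Lr}$ is a contraction-mapping device and is irrelevant to Schauder.

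The paper's proof has the same defect. Its ``standard continuation theorem'' only yields a maximal interval of existence, which may be shorter than $[0,r_0)$, and the continuation is not unique without a Lipschitz hypothesis. What your argument and the paper's actually establish is a solution on $[0,r_1)$ for some $r_1\in(0,r_0]$. One gets all of $[0,r_0)$ only under an extra hypothesis, such as boundedness or at most linear growth of $f$ in $(u,\ldots,u^{(N-1)})$; then Gronwall on the nonsingular part $[r_1,r_0)$ excludes blow-up.
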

	
	The proof of this existence result under the continuity assumption  on $f$ essentially follows the Carath\'eodory approach, combined with the  lemma below to handle the singularity at the origin. Recall that for $k\in \mathbb Z^+\cup\{0\}$, the $C^k$ norm of a function $u\in C^k([a, b])$ is defined by
     $$
  \|u\|_{C^{k}([a, b])} = \max_{0\le j\le k}\sup_{r\in [a, b]}|u^{(j)}(r)|.
$$

\begin{lemma}
	\label{tb}
Let $r_0 > 0$, $N\in \mathbb Z^+$, and   $(b_1, \dots, b_N) \in \mathbb{C}^N$ satisfy condition \eqref{bjN}. For any $g \in C([0, r_0))$, define the operator $\mathcal{T}$ by
\begin{equation*}
	(\mathcal{T}g)(r) :=
	\begin{cases}
	r^{-b_1} \int_0^r   s_N^{b_{1}-b_{2}}	 \int_0^{s_{N}} s_{N-1}^{b_{2}-b_{3}} \cdots \int_0^{s_3}s_2^{b_{N-1}-b_N}\int_0^{s_2} s_1^{b_N}g(s_1)\, ds_1 \cdots ds_N, &  r \in (0, r_0); \\
		0, &  r = 0.
	\end{cases}
\end{equation*}

\noindent
Then $\mathcal{T}g \in C^N([0, r_0))$ and satisfies the system
 \begin{equation}
 	\label{hom1}
 	\left\{
 	\begin{aligned}
   &            \mathcal D_{b_N} \cdots\mathcal D_{b_1} (\mathcal T g) =  g,
   \quad r\in   (0, r_0)\\
    &(\mathcal T g) (0)= \ldots =(\mathcal T g)^{(N-1)}(0) =0.
        \end{aligned}
        \right.
        \end{equation}

        \noindent
Furthermore, for any $r_1 < r_0$, the operator $\mathcal{T}$ is bounded from $C([0, r_1])$ into $C^N([0, r_1])$. In particular,  there exists a constant $C > 0$ independent of $r_1$ such that
\begin{equation}
	\label{Tb1}
    \|\mathcal T g\|_{C^{N-1}([0,r_1])}  \le Cr_1\, \|g\|_{C([0,r_1])}
\end{equation}
and
\begin{equation}\label{Tb2}
    \|(\mathcal T g)^{(N)}\|_{C([0,r_1])}   \le C  \| g \|_{C([0,r_1])}.
\end{equation}
\end{lemma}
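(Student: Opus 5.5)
The plan is to build $\mathcal T$ as an $N$-fold composition of one-step inverses and to track, at each stage, both continuity at the origin and the growth order in $r$. For $\beta\in\mathbb C$ with $\mathrm{Re}(\beta)>-k-1$ and $\phi\in C([0,r_1])$ satisfying $|\phi(s)|\le K s^{k}$, set
\[
S_\beta\phi(r):=r^{-\beta}\int_0^r s^{\beta}\phi(s)\,ds .
\]
Then $\mathcal D_\beta S_\beta\phi=\phi$ on $(0,r_1)$. Moreover
\[
|S_\beta\phi(r)|\le \frac{K\,r^{k+1}}{\mathrm{Re}(\beta)+k+1}.
\]
Comparing the nested integrals with the definition of $\mathcal T$ shows that
\[
\mathcal Tg=S_{b_1}S_{b_2}\cdots S_{b_N}g .
\]
Apply the innermost operator $S_{b_N}$ to $g$ with $k=0$; this requires $\mathrm{Re}(b_N)>-1$. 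The next operator, $S_{b_{N-1}}$, acts with $k=1$ and requires $\mathrm{Re}(b_{N-1})>-2$. In general, $S_{b_j}$ acts with $k=N-j$ and requires $\mathrm{Re}(b_j)>j-N-1$, which is exactly condition \eqref{bjN}. It follows that
\[
G_j:=S_{b_j}\cdots S_{b_N}g=\mathcal D_{b_{j-1}}\cdots\mathcal D_{b_1}(\mathcal Tg)
\]
satisfies $|G_j(r)|\le c_j r^{N-j+1}\|g\|_{C([0,r])}$, with $c_j$ depending only on $(b_1,\ldots,b_N)$. This is the computation already done in \eqref{es}. Since $\mathcal D_{b_N}G_N=g$, and peeling successively gives $\mathcal D_{b_N}\cdots\mathcal D_{b_1}\mathcal Tg=g$, the equation in \eqref{hom1} holds on $(0,r_0)$.

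Next I would prove the derivative bounds
\begin{equation*}
|(\mathcal Tg)^{(i)}(r)|\le C r^{N-i}\|g\|_{C([0,r])},\qquad 0\le i\le N,\ r\in(0,r_1],
\end{equation*}
following the inductive scheme of \eqref{wj}–\eqref{wjn}. The mechanism is the identity
\[
\frac{d}{dr}(S_\beta\phi)=\phi-\frac{\beta}{r}S_\beta\phi .
\]
More generally, if $\phi$ is $C^i$ on $(0,r_1]$ and $|\phi^{(l)}(r)|\le K r^{k-l}$ for $l\le i$, then by induction on the number of derivatives $|(S_\beta\phi)^{(l)}(r)|\le K' r^{k+1-l}$ for $l\le i+1$. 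Here each derivative either lands on $\phi$ or produces a factor $r^{-1}$ times a lower-order derivative. Applying this step through the chain $G_N, G_{N-1},\ldots,G_1=\mathcal Tg$ raises the order of controlled derivatives by one at each stage. Starting from $g$, which is only continuous, we reach exactly $N$ derivatives with $|(\mathcal Tg)^{(i)}|\lesssim r^{N-i}\|g\|$. For $i=N$ this gives boundedness rather than decay.

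It remains to obtain regularity at the origin, and I expect this to be the main obstacle. For $i\le N-1$ the bound $|(\mathcal Tg)^{(i)}(r)|\lesssim r^{N-i}$ tends to $0$. By the standard lemma on derivatives at an endpoint (mean value theorem, inductively), $(\mathcal Tg)^{(i)}(0)$ exists and equals $0$ for $i\le N-1$, and these derivatives are continuous at $0$. This gives the vanishing initial conditions. Taking suprema over $[0,r_1]$ yields \eqref{Tb1}, because $r^{N-i}\le r_1^{N-i}\le C r_1$ after adjusting constants on bounded intervals; for small $r_1$ one uses $r^{N-i}\le r_1$ directly.

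For the top derivative, boundedness does not give continuity at $0$. Here I would show that $(\mathcal Tg)^{(N)}(r)\to L:=g(0)/\prod_{j=1}^N(N-j+1+b_j)$. To do this, write $g=g(0)+(g-g(0))$. The constant part is handled exactly: a direct computation gives $\mathcal T(1)=r^N/\prod_{j}(N-j+1+b_j)$, whose $N$-th derivative is the constant $L/g(0)$; the denominator is nonzero by \eqref{bjN}. For the remainder, apply the derivative bound on $[0,r]$ with $\|g-g(0)\|_{C([0,r])}\to0$, which yields $|(\mathcal T(g-g(0)))^{(N)}(r)|\le C\,o(1)$. Hence $(\mathcal Tg)^{(N)}$ extends continuously, so $\mathcal Tg\in C^N([0,r_0))$, and \eqref{Tb2} follows from the $i=N$ bound. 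The constant $C$ is independent of $r_1$ because every estimate is scale-invariant: the powers of $r$ are explicit and the constants depend only on the $b_j$.
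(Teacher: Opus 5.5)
Your proof is correct, and it takes a cleaner route than the paper.

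\textbf{How the paper argues.} The paper starts from the representation \eqref{wj} and proves the limits \eqref{cla} for $(\mathcal Tg)^{(m)}/r^{N-m}$. It then computes two quantities separately: the difference quotient giving $(\mathcal Tg)^{(N)}(0)$ in \eqref{22}, and the one-sided limit $\lim_{r\to0^+}(\mathcal Tg)^{(N)}(r)$ in \eqref{11}, via formula \eqref{tN}. Finally it checks that these agree, using the recurrence \eqref{cm} and $a_{m+1}=(N-m)a_m$.

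\textbf{How you argue.} You factor $\mathcal T=S_{b_1}\cdots S_{b_N}$ and propagate the bounds $|G^{(l)}|\lesssim r^{k-l}$ through the single identity $(S_\beta\phi)'=\phi-\tfrac{\beta}{r}S_\beta\phi$. This has three advantages:
\begin{itemize}
\item It yields every derivative estimate up to order $N$, including the one behind \eqref{Tb2}, in one self-contained induction.
\item It supplies the induction the paper only asserts in \eqref{wjn}.
\item It shows that \eqref{bjN} is exactly the condition $\mathrm{Re}(b_j)>-(N-j)-1$ needed at the $j$-th integration.
\end{itemize}

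\textbf{Top derivative.} Your split $g=g(0)+(g-g(0))$, combined with the endpoint mean value lemma, makes the paper's consistency check unnecessary. Once $(\mathcal Tg)^{(N-1)}$ is continuous at $0$ and $(\mathcal Tg)^{(N)}(r)$ has a limit as $r\to0^+$, the derivative at $0$ exists and equals that limit. So the agreement of \eqref{22} and \eqref{11} is guaranteed in advance. You should invoke this lemma explicitly at order $N$, as you did for lower orders. It is what turns ``extends continuously'' into $\mathcal Tg\in C^N([0,r_0))$.

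\textbf{One small slip.} You have $(\mathcal T1)^{(N)}=N!/\prod_{j=1}^N(N-j+1+b_j)$, so the limit is $N!\,g(0)/\prod_{j}(N-j+1+b_j)$, not $g(0)/\prod_{j}(N-j+1+b_j)$. This does not affect the argument.
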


\vspace{.05in}

\begin{proof}
As seen in the proof of Theorem \ref{gron}, the admissibility condition \eqref{bjN} ensures that the iterated integral in the definition of $\mathcal{T}$ is well-defined. Furthermore, $\mathcal{T}g \in C^N((0, r_1))$ and satisfies the system \eqref{hom1}. The bound \eqref{Tb1} follows directly from \eqref{wjn}. Consequently, it remains only to show that $(\mathcal{T}g)^{(N)}$ is continuous at $0$ and satisfies \eqref{Tb2}.

Recall that \eqref{wj} with $j=N-1$ yields
\begin{equation}
	\label{tgn-1}
           (\mathcal T g)^{(N-1)}(r) = r^{-b_{N}}
\int_0^{r} s_1^{b_N} g(s_1)\, ds_1 + \sum_{m=0}^{N-2}
\frac{C_{N-1,m}}{r^{\,N-1-m}}\, (\mathcal T g)^{(m)}(r).
\end{equation}
Before proceeding to the $N$-th derivative of $\mathcal Tg$, we first  claim that
\begin{equation}\label{cla}
    \lim_{r\rightarrow 0^+} \frac{(\mathcal T g)^{(m)}(r)}{{r^{\,N-m}}} = a_m g(0), \ \ m=0, \ldots, N-1,
\end{equation}
where
\begin{equation}\label{am}
     a_m: = \frac{(\mathcal T 1)^{(m)}(r)}{r^{N-m}}   = \frac{N!}{(N-m)!\cdot \prod_{j=1}^N\left( b_j +N-j+1\right)}.
\end{equation}
 Indeed, to see this, we  first estimate    $\mathcal T\left(g-g(0)\right)$ by replacing $g$ in the expression of \eqref{we} by $g-g(0)$. The estimate   \eqref{es} yields
 $$ \frac{|\mathcal T\left(g-g(0)\right)(r)|}{r^N}\le \frac{\max_{s\in [0,r]}|g(s)-g(0)|}{\prod_{j=1}^N\left(\mathrm{Re}(b_j)+N-j+1\right)}.$$
 Since $g$ is continuous at $r=0$, the right-hand side goes to zero as $r\rightarrow 0^+$. Hence
 $$ \lim_{r\rightarrow 0^+}\frac{\mathcal T g(r)}{r^N} =\frac{ \mathcal T(g(0))}{r^N} = g(0)\cdot \frac{\mathcal T1(r) }{r^N}= \frac{g(0)}{\prod_{j=1}^N\left( b_j +N-j+1\right)} =a_0g(0).$$
 A similar argument as above can further show that
$$\frac{|\left(\mathcal T\left(g-g(0)\right)\right)^{(m)}(r)|}{r^{N-m}} \rightarrow 0,   \ \ m=0, \ldots, N-1$$
as $r\rightarrow 0^+$ as well.  Thus
$$  \lim_{r\rightarrow 0^+} \frac{(\mathcal T g)^{(m)}(r)}{{r^{\,N-m}}} = \frac{(\mathcal T (g(0))(r))^{(m)}(r)}{{r^{\,N-m}}} = g(0) \cdot \frac{(\mathcal T 1(r))^{(m)}(r)}{{r^{\,N-m}}} = a_m g(0). $$
The claim \eqref{cla} is proved.

We  now   verify the continuity of the $N$-th derivative of $\mathcal{T}g$ at $0$. Since $ (\mathcal{T}g )^{(N-1)}(0)=0,$ we use the definition of derivative along with \eqref{tgn-1}, L'H\^opital's Rule, and the limit established in \eqref{cla} to obtain
\allowdisplaybreaks
\begin{align}
    \label{22}
           (\mathcal Tg)^{(N)}(0) & = \lim_{r\to 0^+}\frac{(\mathcal Tg)^{(N-1)}(r) - (\mathcal Tg)^{(N-1)}(0)}{r} \nonumber \\
        & = \lim_{r\to 0^+} \left (r^{-b_{N}-1}
\int_0^{r} s_1^{b_N} g(s_1)\, ds_1 + \sum_{m=0}^{N-2}
\frac{C_{N-1,m}}{r^{\,N-m}}\, (\mathcal T g)^{(m)}(r) \right )\nonumber \\
& = \frac{g(0)}{b_N+1} + \sum_{m=0}^{N-2}C_{N-1,m}a_mg(0).
\end{align}

\noindent
On the other hand, for $r\in   (0, r_0)$, differentiating \eqref{tgn-1} with respect to $r$ yields
\allowdisplaybreaks
\begin{align}
	\label{tN}
    (\mathcal T g)^{(N)}(r) & = g(r) -\frac{b_N}{r} \left( r^{-b_N} \int_0^r s_1^{b_N} g(s_1) \, ds_1 \right)  \nonumber \\
   & \quad + \sum_{m=0}^{N-2} \left (\frac{-(N-1-m) C_{N-1,m}}{r^{N-m}} (\mathcal T g)^{(m)}(r) + \frac{C_{N-1,m}}{r^{N-1-m}} (\mathcal T g)^{(m+1)}(r) \right ) \nonumber \\
      & = g(r) -\frac{b_N}{r} \left(  (\mathcal T g)^{(N-1)}(r) - \sum_{m=0}^{N-2} \frac{C_{N-1,m}}{r^{N-1-m}} (\mathcal T g)^{(m)}(r) \right) \nonumber \\
   & \quad + \sum_{m=0}^{N-2} \frac{-(N-1-m) C_{N-1,m}}{r^{N-m}} (\mathcal T g)^{(m)}(r) + \sum_{m=1}^{N-1}\frac{C_{N-1,m-1}}{r^{N-m}} (\mathcal T g)^{(m)}(r)  \nonumber \\
   &  = g(r) +\sum_{m=0}^{N-1}C_{N,m}
\frac{ (\mathcal T g)^{(m)}(r)}{r^{\,N-m}},
\end{align}
where the coefficients    $\{C_{N,m}\}_{0 \leq m \leq N-1}$ are determined by    $\{C_{N-1,m}\}_{0 \leq m \leq N-2}$, via the following  relations:
\begin{align}
	\label{cm}
    \begin{split}
        &C_{N,0}=\big (b_N-(N-1) \big )C_{N-1,0},\\
        &C_{N,m}= \big (b_N-(N-1-m) \big )C_{N-1,m}+C_{N-1,m-1},\quad 1\le m\le N-2,\\
       & C_{N,N-1}= C_{N-1,N-2}-b_N.
    \end{split}
\end{align}

\vspace{.05in}
\noindent
Taking the limit as $r \to 0^+$ in \eqref{tN} and applying the limit established in \eqref{cla}, we find
\begin{align}
	\label{11}
	        \lim_{r\to 0^+}(\mathcal T g)^{(N)}(r) & = g(0) + \sum_{m=0}^{N-1}C_{N,m}\lim_{r\to 0^+}\frac{ (\mathcal T g)^{(m)}(r)}{r^{\,N-m}} \nonumber \\
	& = g(0) + \sum_{m=0}^{N-1}C_{N,m}a_mg(0).
\end{align}

In view of \eqref{11} and \eqref{22}, the continuity of $(\mathcal{T} g)^{(N)}$ at $0$ is therefore equivalent to the identity
$$
\frac{1}{b_N+1}+\sum_{m=0}^{N-2} C_{N-1,m} a_m
=
1+\sum_{m=0}^{N-1} C_{N,m} a_m .
$$
By substituting the recurrence relations \eqref{cm} for $C_{N,m}$ and using the relation $$a_{m+1} = (N-m) a_m$$ from \eqref{am}, we observe that
\begin{align*}
\sum_{m=0}^{N-1} C_{N,m} a_m
=&\sum_{m=1}^{N-2}
\big((b_N-(N-1-m))C_{N-1,m}+C_{N-1,m-1}\big)a_m \\
&+\ (b_N-(N-1))C_{N-1,0}a_0 +(C_{N-1,N-2}-b_N)a_{N-1}\\
=&
\sum_{m=0}^{N-2}
C_{N-1,m}\bigg(\big(b_N-(N-1-m)\big)a_m+a_{m+1}\bigg)
-
b_N a_{N-1}  \\
=&
(b_N+1)\sum_{m=0}^{N-2} C_{N-1,m} a_m - b_N a_{N-1}.
\end{align*}

\noindent
Substituting this back into our equivalence condition, we see that the continuity holds if
$$  \frac{1}{b_N+1}+\sum_{m=0}^{N-2} C_{N-1,m} a_m
=1+ (b_N+1)\sum_{m=0}^{N-2} C_{N-1,m} a_m - b_N a_{N-1},  $$
which simplifies to
\begin{align}
	\label{eq}
	0 & =\frac{b_N}{b_N+1}	+
	b_N\sum_{m=0}^{N-2}C_{N-1,m}a_m	-	b_N a_{N-1} \nonumber \\
	& =	b_N\!\left(
	\frac{1}{b_N+1}	+
	\sum_{m=0}^{N-2}C_{N-1,m}a_m-a_{N-1}\right).
\end{align}

\noindent
On the other hand, from \eqref{tgn-1} and the definition of $a_m$ in \eqref{am} it follows immediately that
\begin{align*}
	a_{N-1} &= \frac{(\mathcal{T} 1)^{(N-1)}(r)}{r}   = \frac{r^{-b_N} \int_0^r s^{b_N} \, ds + \sum_{m=0}^{N-2} \frac{C_{N-1,m}}{r^{N-1-m}} (\mathcal{T} 1)^{(m)}(r)}{r} \\
	&= \frac{1}{b_N+1} + \sum_{m=0}^{N-2} C_{N-1,m} \frac{(\mathcal{T} 1)^{(m)}(r)}{r^{N-m}}  = \frac{1}{b_N+1} + \sum_{m=0}^{N-2} C_{N-1,m} a_m.
\end{align*}

\noindent
Therefore, \eqref{eq} holds and we
  thus conclude that   $(\mathcal T g)^{(N)}\in C([0, r_1])$. Finally, the estimate \eqref{Tb2} follows from \eqref{tN} and \eqref{wjn}.

\end{proof}

\begin{proof}[\textbf{Proof of Theorem \ref{cont}}: ]
Let the operator $\mathcal{T}$ and the constant $C$ be given as in Lemma \ref{tb}, satisfying \eqref{Tb1} and \eqref{Tb2}. Define
\begin{equation}\label{M}
    M: = \sup_{\substack{r\in [0, 1], |p_j|\leq 1, \\\\ j=0, \ldots, N-1 }}|f(r, p_0, \ldots, p_{N-1})|,
\end{equation}
and choose $0 < {r_1} < \min\{1, r_0\}$ such that $$MC{r_1} < 1.$$

\noindent
It suffices to show that there exists a $C^N([0, {r_1}])$ solution to the system \eqref{hom} on $[0,{r_1})$. Indeed, for $ {r_1}<r <r_0$, the system is non-singular. Thus  any local solution $u$ extends uniquely to a $C^N([0, r_0))$ solution by the standard continuation theorem for initial value problems of ordinary differential equations.

We construct a sequence of functions $(u_l)_{l \ge 1}$ inductively as follows. For each $l \in \mathbb{Z}^+$, define$$u_l(r) := 0, \quad r \in \left[0, \frac{{r_1}}{l}\right],$$and for each $j = 2, \ldots, l$, set
\begin{equation}
	\label{lim}
	u_l(r) := \mathcal{T}\left( f(\cdot, u_l, \ldots, u_l^{(N-1)})\right)\left(r - \frac{{r_1}}{l}\right), \quad r \in \left(\frac{(j-1){r_1}}{l}, \frac{j{r_1}}{l}\right].
	\end{equation}
	From \eqref{hom1}, it is straightforward to verify that $u_l \in C^{N-1}([0, {r_1}])$.

 We now estimate the $C^{N-1}$ norm of $u_l$. Clearly, $$\|u_l\|_{C^{N-1}([0, \frac{{r_1}}{l}])} = 0 < 1.$$ Proceeding inductively for each $j = 2, \ldots, l$, we apply \eqref{Tb1} to obtain
          $$ \|u_l\|_{C^{N-1}([\frac{{r_1}}{l} , \frac{j{r_1}}{l}])} \le C {r_1}\sup_{r\in [0, \frac{(j-1){r_1}}{l}]}|f(\cdot, u_l, \ldots, u_l^{(N-1)})|\le MC{r_1}<1.$$
       Combining these estimates, we find that
          $$  \|u_l\|_{C^{N-1}([0 , {r_1}])}<1. $$
           Thus, the sequence $(u_l)$ is  uniformly bounded in $C^{N-1}([0, {r_1}])$.

      Furthermore, by construction, $u_l^{(N)}$ is continuous on $[0, {r_1}]$ except possibly at $r = \frac{{r_1}}{l} $. By \eqref{Tb2}, we have
     \begin{equation*}
         \|u_l^{(N)}\|_{L^\infty([0, {r_1}])}\le C \sup_{r\in [0, {r_1}]}|f(\cdot, u_l, \ldots, u_l^{(N-1)})|\le C M.
     \end{equation*}
  The equicontinuity of $(u_l)$ and its derivatives up to order $N-1$ follows from the inequality
  \begin{align*}
  	|u^{(j)}_l(t_1) - u^{(j)}_l(t_2)| &\le \max\{\|u_l\|_{C^{N-1}([0, {r_1}])}, \|u_l^{(N)}\|_{L^\infty([0, {r_1}])}\} |t_1 - t_2| \\
  	&= \max\{1, CM\} \cdot |t_1 - t_2|, \quad j = 0, \ldots, N-1.
  \end{align*}

        Therefore, by the Arzel\`a-Ascoli theorem, there exists a subsequence, still denoted by $(u_l)$, such that $u_l \to u$ uniformly in $C^{N-1}([0, {r_1}])$ norm. Taking the limit as $l \to \infty$ in \eqref{lim} and invoking the continuity of $f$, we obtain
    $$  u = \mathcal T\left( f(\cdot, u, \ldots, u^{(N-1)})\right).$$

    \noindent
    Since $ f(\cdot, u, \ldots, u^{(N-1)})\in C([0,{r_1}])  $, Lemma \ref{tb} implies that $u\in C^N([0, {r_1}])$. In addition, by \eqref{hom1}, $u$ satisfies the initial value problem \eqref{hom} on $[0, {r_1}]$. The proof is complete.

\end{proof}

If $f$ is additionally locally Lipschitz continuous with respect to the variables $(u, \ldots, u^{(N-1)})$, the existence of a solution to \eqref{hom} can alternatively be established using a contraction mapping principle. For the reader's convenience, we provide the details of this alternative proof below. Recall that $f: \mathbb R^{l}\rightarrow \mathbb C^{\nu}$ is said to be locally Lipschitz if it is Lipschitz on every small neighborhood of each point. Equivalently, for every compact set $K\subset \mathbb R^l$, there exists some constant $L$ dependent on $K$ such that
$$ |f(x)-f(y) |\le L|x-y|  \ \ \text{for}\ \ x, y\in K.  $$

\begin{thm}
	\label{iode}
     Let $r_0>0, N\in \mathbb Z^+$ and   $(b_1, \ldots, b_N)\in \mathbb C^N$ satisfy  \eqref{bjN}. There exists a   $ C^{N}([0,  r_0))$  solution to the nonlinear differential system \eqref{hom}
provided that $f$ is continuous in all variables and locally Lipschitz continuous with respect to the variable $(u, \ldots, u^{(N-1)})$.

\end{thm}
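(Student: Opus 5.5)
We will recast \eqref{hom} as the fixed-point problem $u=\Phi(u):=\mathcal T\big(f(\cdot,u,\ldots,u^{(N-1)})\big)$ and solve it on a short interval $[0,r_1]$ with the Banach contraction principle. Afterwards we continue the solution to $[0,r_0)$ by the regular ODE theory, exactly as in the proof of Theorem \ref{cont}. Lemma \ref{tb} does most of the work. Since $f(\cdot,u,\ldots,u^{(N-1)})$ is continuous whenever $u\in C^{N-1}$, $\Phi(u)$ lies in $C^N([0,r_1])$. It satisfies $\mathcal D_{b_N}\cdots\mathcal D_{b_1}\Phi(u)=f(\cdot,u,\ldots,u^{(N-1)})$ and has vanishing jets up to order $N-1$ at $0$. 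So any fixed point in $C^{N-1}$ is automatically a $C^N$ solution of \eqref{hom}.

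\textbf{Setup.} Let $X$ be the closed set of $u\in C^{N-1}([0,r_1],\mathbb C^\nu)$ with $\|u\|_{C^{N-1}([0,r_1])}\le 1$ and $u^{(j)}(0)=0$ for $j\le N-1$; it is complete in the $C^{N-1}$ norm. Take $r_1<\min\{1,r_0\}$, and set $K:=[0,1]\times\{|p_j|\le1\}$. Let $M$ be the supremum of $|f|$ over $K$, as in \eqref{M}. By local Lipschitz continuity on the compact set $K$ (uniformly in $r\in[0,1]$, which I take to be the intended meaning of the hypothesis), there is $L$ with
$$|f(r,p)-f(r,q)|\le L\sum_j|p_j-q_j| \quad\text{on } K.$$

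\textbf{Self-map and contraction.} By \eqref{Tb1},
$$\|\Phi(u)\|_{C^{N-1}}\le C r_1\|f(\cdot,u,\ldots)\|_{C}\le CMr_1\le 1$$
once $r_1$ is small, so $\Phi(X)\subset X$. By linearity of $\mathcal T$ and \eqref{Tb1} again,
$$\|\Phi(u)-\Phi(v)\|_{C^{N-1}}\le Cr_1\sup_r\big|f(r,u,\ldots)-f(r,v,\ldots)\big|\le CLNr_1\|u-v\|_{C^{N-1}}.$$
Choosing $r_1$ so that also $CLNr_1<1$ makes $\Phi$ a contraction. Its unique fixed point is a $C^N([0,r_1])$ solution by Lemma \ref{tb}.

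\textbf{Extension and main obstacle.} On $[r_1/2,r_0)$ the coefficients $b_j/r$ are smooth, so \eqref{hom} is a regular $N$-th order system $u^{(N)}=F(r,u,\ldots,u^{(N-1)})$ with $F$ continuous and locally Lipschitz. Standard Picard theory continues the solution with initial data at $r_1/2$. The main obstacle is that the continued solution may blow up before reaching $r_0$, since $f$ is only locally Lipschitz. I would handle this as Theorem \ref{cont} implicitly does, by appealing to the standard continuation theorem. If global existence on all of $[0,r_0)$ is genuinely needed, I would first restrict to a neighbourhood where the solution stays bounded. Alternatively, I would assume $f$ is defined for all arguments, so that the maximal solution exists, and interpret the conclusion on the maximal interval. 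I would flag that as the delicate point.
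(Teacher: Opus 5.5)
Your proposal is correct and is essentially the paper's proof: a contraction argument for $\Phi(u)=\mathcal T\big(f(\cdot,u,\ldots,u^{(N-1)})\big)$ on the unit $C^{N-1}$-ball with vanishing jets, with \eqref{Tb1} giving both the self-map and the contraction estimates, followed by the regular continuation step of Theorem \ref{cont}. Your choice to work in a closed subset of $C^{N-1}$, rather than in the paper's subset of $C^N$ with the $C^{N-1}$ metric, is what actually makes the space complete.

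The obstacle you flag is genuine, and the paper does not address it either. Take $\nu=1$, $N=1$, $b_1=0$ and $f=1+|u|^2$: the unique solution is $\tan r$, so existence on all of $[0,r_0)$ fails for $r_0>\pi/2$. The conclusion should therefore be read as existence on some $[0,r_1)$.
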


\begin{proof}
Let $r_1\in (0, r_0)$ be a sufficiently small   number  to be determined later. As in the proof of Theorem \ref{cont}, we only need to show the existence of $C^N$ solutions on $[0, r_1] $.  Let $X$ be the closed subset of $ C^{N}([0,r_1]) $ defined by
\[
X := \left \{ u \in C^{N}([0,r_1]) : u^{(j)}(0)=0 \text{ for } j=0,\dots,N-1, \ \text{and}\ \|u\|_{C^{N-1}([0, r_1])}\le 1 \right \}.
\]
Then $X$ is a complete metric space with respect to the induced metric $$d(u, v): = \|u-v\|_{C^{N-1}([0, r_1])} \ \ \text{for}\ \   u, v\in X.$$  We shall establish the existence of $C^N$ solutions by applying the  contraction mapping principle (see \cite[Theorem 5.7]{Br}) to a suitably defined operator on $X$.

Given $u\in X$, define
$$
\Phi(u):=\mathcal T\big(f(\,\cdot\,,u,\ldots,u^{(N-1)})\big).
$$
By Lemma \ref{tb} and the fact that $f(\cdot, u, \dots, u^{(N-1)}) \in C([0, r_1])$, we know $\Phi(u) \in C^{N}([0,r_1])$ with $u^{(j)}(0)=0$ for  $j=0,\dots,N-1$. Moreover, by \eqref{Tb1},
$$ \|\mathcal T\big(f(\,\cdot\,,u,\ldots,u^{(N-1)})\big)\|_{C^{N-1}([0, r_1])}\le Cr_1\| f(\,\cdot\,,u,\ldots,u^{(N-1)}) \|_{C([0, r_1])}\le Cr_1M,$$
where $M$ is defined in \eqref{M}. Thus, by choosing $r_1 $ such that $Cr_1M\le 1 $, one has $\Phi$ maps $X$ into itself.

Furthermore, since $f$ is locally Lipschitz continuous with respect to     $(u, \dots, u^{(N-1)})$, for any $u, v \in X$, the functions $f(\cdot, u, \dots, u^{(N-1)})$ and $f(\cdot, v, \dots, v^{(N-1)})$ belong to $C([0, r_1])$ and satisfy the estimate
$$
\|f(\cdot,u,\ldots,u^{(N-1)})-f(\cdot,v,\ldots,v^{(N-1)})\|_{C([0,r_1])}
\le L \|u-v\|_{C^{N-1}([0, r_1])},
$$
where $L>0$ is the Lipschitz constant of $f(r, p_0, \ldots, p_{N-1})$ on the compact set $\{r\in [0, 1], |p_j|\leq 1, j=0, \ldots, N-1   \}$. Consequently, by the estimate in \eqref{Tb1}, we have
\begin{align*}
d(\Phi(u), \Phi(v)) &= \|\Phi(u)-\Phi(v)  \|_{C^{N-1}([0, r_1])}\\
& \le C\, r_1\, \|f(\cdot,u,\ldots,u^{(N-1)})-f(\cdot,v,\ldots,v^{(N-1)})\|_{C([0,r_1])} \\
& \le C L  \, r_1 \|u-v\|_{C^{N-1}([0, r_1])} =(CLr_1)   d(u, v).
\end{align*}
By further shrinking  $r_1 $  such that $C L r_1  < 1$, the mapping $\Phi$ becomes a contraction on the complete metric space $X$.

According to the  contraction mapping principle, $\Phi$ possesses a unique fixed point $u \in X \subset C^N([0, r_1])$. By construction, this fixed point satisfies the homogeneous initial conditions and solves the differential system
$$
\mathcal D_{b_N}\cdots \mathcal D_{b_1}u
=\mathcal D_{b_N}\cdots \mathcal D_{b_1}\left( \Phi( u) \right)= f(r,u,\ldots,u^{(N-1)})
\quad\text{on }(0,r_1).
$$

\end{proof}

We are now ready to prove the existence and uniqueness of solutions with nonhomogeneous initial conditions, specifically those corresponding to the orders in $\Lambda$.

\begin{proof}[\textbf{Proof of Theorem \ref{iodea}}: ]

    First, by Lemma \ref{el}, the function  $$h: = \sum_{j\in \Lambda} \frac{\zeta_j}{j!}r^j$$ satisfies
 \begin{equation*}
 	\left\{ \begin{aligned}
            &   \mathcal D_{b_N} \cdots\mathcal D_{b_1} u   = 0,    \ \ r\in   (0, r_0)\\
 & u^{(j)}(0)   = \zeta_j, \ \ j\in  \Lambda;\\
  &  u^{(j)}(0)   = 0, \ \ j\in A\setminus \Lambda.
            \end{aligned}\right.
            \end{equation*}

\noindent
We seek  a solution  to \eqref{indicial} of the form $u={\tilde u}+h$. Substituting this into the equation yields the following initial value problem 
   \begin{equation*}
   	\left\{
   	\begin{aligned}
         &  \mathcal D_{b_N} \cdots\mathcal D_{b_1}{\tilde u}(r)    =  \tilde{f} (r, {\tilde u}, \ldots, {\tilde u}^{(N-1)}),
   \quad r\in   (0, r_0) \\
& \tilde{u}(0)  = \tilde{u}'(0)   = \cdots = \tilde {u}^{(N-1)}(0),
            \end{aligned}
            \right.
\end{equation*}
 where the nonlinearity is defined as $$ \tilde f(r,{\tilde u}, \ldots, {\tilde u}^{(N-1)}): = f(r, {\tilde u}+h, \ldots, ({\tilde u}+h) ^{(N-1)}).$$
 According to the existence result in Theorem \ref{cont}, this homogeneous initial value problem admits a solution $\tilde{u} \in C^N([0, r_0))$. Consequently, $u = \tilde{u} + h$ is a $C^N$ solution to \eqref{indicial}.

Next, we establish uniqueness. Suppose $u$ and $v$ are both $C^{N}$ solutions to the system \eqref{indicial}. Then $w:  = u - v$  and its derivatives up to order $N-1$ vanish at the origin. Moreover, $w$ satisfies
  \begin{align*}
  		\mathcal D_{b_N} \cdots\mathcal D_{b_1}   {w}  & =   {f}(r,  {  {u},       \dots,   {u}^{(N-1)}} ) -    {f}(r,  {  {v},     \dots,   {v}^{(N-1)}} ) .
  \end{align*}
  Taking the absolute value of both sides and applying the Lipschitz continuity of $f$, we obtain that for any $r_1 < r_0$, there exists a constant $L> 0$ such that
    $$ | \mathcal D_{b_N} \cdots\mathcal D_{b_1}   w (r)| \leq L \sum_{j=0}^{N-1} \left |    w^{(j)}  (r)\right |   \ \ \text{for}\ \ r \in (0, r_1). $$
	 It follows from Theorem \ref{gron} with $h(r)=Lr$ that $w  \equiv 0$  on the interval $[0, r_1]$. Since $r_1$ was chosen arbitrarily, we conclude that $u \equiv v$ on $[0, r_0)$. This completes the proof.

\end{proof}

 \vspace{.2in}

\section{Proof of Theorems \ref{thm:manifold_tri} and  \ref{thm:application_manifold_2m-1}}
\label{sec8}

Let $(M, g)$ denote a smooth, $n$-dimensional manifold equipped with a Riemannian metric $g$. For a fixed point ${p} \in M$, a  normal neighborhood $U_p$ of ${p}$ is the diffeomorphic image under the exponential map $\exp_{p}$ of a bounded star-shaped neighborhood of $0 \in T_{{p}}M$ (\cite{Lee}). Any point in $U_p$ is connected to ${p}$ by a unique, length-minimizing geodesic within $U_p$.

The proof of Theorem \ref{thm:manifold_tri} proceeds by reducing the inequality \eqref{eqn:Delta_g^m_inequality} for geodesically radially symmetric functions to an ordinary differential system of the form \eqref{gode1}, which makes Theorem \ref{gron} applicable. This reduction is carried out below in detail using geodesic polar coordinates.

To begin with, we may choose geodesic polar coordinates $(r, \theta^1, \dots, \theta^{n-1})$ in $U_p$, where 
$$r := d_g(\cdot, {p})$$ 
is the geodesic distance from the center that is smooth in $U_p\setminus \{{p}\}$. Here the angular coordinates $ \theta = (\theta^1, \dots, \theta^{n-1})$ are defined on the standard unit sphere $S^{n-1} \subset T_{{p}}M$, and parameterize points on the geodesic sphere $S_{r}({p})$ via the exponential map $ \exp_{{p}}(r\theta)$.
In these coordinates, each point in $ U_p \setminus \{{p}\}$ is represented by $(r,\theta) \in V$, with
\begin{equation*} 
    V:=\{(r, \theta): 0< r<r_\theta, \ \theta\in S^{n-1}\},  
\end{equation*}   
where $r_\theta>0$ denotes the radial bound  in the direction   $\theta$. 
By the Gauss Lemma, the metric $g$ on $U_p \setminus \{{p}\}$ decomposes in the geodesic polar coordinates as
\begin{equation}\label{gme}
	g = dr^2 + \sum_{a,b=1}^{n-1}g_{ab}(r, \theta) d\theta^a d\theta^b = dr^2 + r^2\sum_{a,b=1}^{n-1}\gamma_{ab}(r, \theta) d\theta^a d\theta^b, \quad (r, \theta) \in V,
\end{equation}
where $\gamma(r, \theta) := (\gamma_{ab}(r, \theta))_{a, b=1}^{n-1}$ extends as a smooth family of Riemannian metrics on $S^{n-1}$ defined for $r \in [0, r_\theta)$ such that $\gamma(0, \cdot)$ is the standard round metric on $S^{n-1}$. In particular,   $\gamma$ is uniformly positive definite on $\overline{V}$.

Consequently, we introduce the scale-invariant frame
\begin{equation}
	\label{frame}
	X_0 := \partial_r, \quad X_a := r^{-1}\partial_{\theta^a} \quad \text{for } a=1, \dots, n-1.
\end{equation}
A direct computation shows
\begin{equation*}
	g(X_0, X_0) = 1, \quad g(X_0, X_a) = 0, \quad \text{and} \quad g(X_a, X_b) = \gamma_{ab}(r, \theta).
\end{equation*}
Let $\tilde{g}_{kl} := g(X_k, X_l), 0\le k, l \le n-1$. Then the matrix $(\tilde{g}_{kl})$ takes the block-diagonal form
$$(\tilde{g}_{kl}) =
\begin{pmatrix}
	1 & 0 \\
	0 & \gamma_{ab}(r,\theta)
\end{pmatrix}.$$
Since $\gamma_{ab}(r,\theta)$ is smooth and uniformly positive definite up to $r=0$, it follows that $(\tilde{g}_{kl})$ is also smooth and uniformly positive definite near $r=0$. Consequently, the frame $\{X_j\}$ is uniformly equivalent to an orthonormal frame. With respect to this frame, we define the connection coefficients $\omega_{ij}^k$ by
$$\nabla_{X_i} X_j = \sum_{k=0}^{n-1} \omega_{ij}^k X_k,$$
where $\nabla$ denotes the Levi-Civita connection associated with $g$. Clearly, $ \omega_{ij}^k$ is smooth on $U_p\setminus \{{p}\}$. We first establish estimates for $\omega_{ij}^k$ in the following lemma.

\begin{lemma}
	\label{lem:polar_connection}
	With the above notations, there exists some constant $C > 0$ such that
	$$|\omega_{ij}^k | \leq C r^{-1} \quad \text{on } U_p \setminus \{{p}\}$$
	for all $i, j, k \in \{0, \dots, n-1\}$.
\end{lemma}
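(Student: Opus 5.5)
The strategy is to compute $\omega_{ij}^k$ explicitly through the Koszul formula for the frame \eqref{frame}. Since $(\tilde g_{kl})$ is uniformly positive definite with uniformly bounded inverse on $\overline V$, it suffices to show that
\[
\Gamma_{ijl}:=g(\nabla_{X_i}X_j, X_l)=O(r^{-1})
\]
for all $i,j,l$. We then recover $\omega_{ij}^k=\sum_l \tilde g^{kl}\Gamma_{ijl}$, and the inverse matrix $\tilde g^{kl}$ is bounded, so the bound passes to $\omega_{ij}^k$.

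The Koszul formula reads
\[
2\Gamma_{ijl}=X_i\tilde g_{jl}+X_j\tilde g_{il}-X_l\tilde g_{ij}+g([X_i,X_j],X_l)-g([X_i,X_l],X_j)-g([X_j,X_l],X_i).
\]
We estimate the two types of terms in turn.

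\textbf{Derivative terms.} The coefficients $\tilde g_{kl}$ are $1$, $0$, or $\gamma_{ab}(r,\theta)$, and these are smooth on $[0,r_\theta)\times S^{n-1}$. Hence $\partial_r\gamma_{ab}$ and $\partial_{\theta^c}\gamma_{ab}$ are bounded. It follows that $X_0\tilde g_{jl}=O(1)$ and $X_c\tilde g_{jl}=r^{-1}\partial_{\theta^c}\tilde g_{jl}=O(r^{-1})$.

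\textbf{Bracket terms.} Coordinate vector fields commute, so the brackets are explicit:
\[
[X_0,X_a]=\partial_r(r^{-1})\partial_{\theta^a}=-r^{-1}X_a,\qquad [X_a,X_b]=0,\qquad [X_0,X_0]=0.
\]
Their pairings with the frame are therefore $-r^{-1}\tilde g_{al}=O(r^{-1})$. Combining both types, every $\Gamma_{ijl}=O(r^{-1})$, and hence $|\omega_{ij}^k|\le Cr^{-1}$.

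\textbf{Main obstacle.} The one point requiring care is uniformity: the constant $C$ must not depend on $\theta$, and it must control the behaviour as $r\to0$. I would take the bounds on $\gamma$, on its first derivatives, and on the inverse of $\gamma$ over $\overline V$, which is compact up to $r=0$ after shrinking $U_p$ slightly if necessary. These bounds come from the smooth extension of $\gamma$ to $r=0$ stated after \eqref{gme}. Alternatively, since $\partial_{\theta^a}$ is only a local coordinate field, I would cover $S^{n-1}$ by finitely many charts and take the maximum of the constants obtained in each chart.
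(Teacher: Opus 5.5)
Your proposal is correct and follows the paper's proof essentially verbatim. You reduce to $g(\nabla_{X_i}X_j,X_l)=O(r^{-1})$ via the bounded inverse $(\tilde g^{kl})$, bound the Koszul derivative terms using smoothness of $\gamma$ up to $r=0$, and bound the bracket terms using $[X_0,X_a]=-r^{-1}X_a$ and $[X_a,X_b]=0$. Your uniformity remarks (finitely many angular charts, and working where $\gamma$ is uniformly controlled on $\overline V$) make explicit what the paper takes for granted in its setup.
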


\begin{proof}
	Since
	\begin{equation*}
		g(\nabla_{X_i} X_j, X_l) = \sum_{k=0}^{n-1} \omega_{ij}^k \tilde{g}_{kl},
	\end{equation*}
	multiplying both sides by the inverse matrix $(\tilde{g}^{kl})_{n \times n}$ yields
	$$\omega_{ij}^k = \sum_{l=0}^{n-1} \tilde{g}^{kl} g(\nabla_{X_i} X_j, X_l).$$
	Due to the uniform boundedness of $(\tilde{g}^{kl})_{n \times n}$, it suffices to show that the terms $g(\nabla_{X_i} X_j, X_l)$ are all $O(r^{-1})$. By the Koszul formula,
	\begin{align*}
		2 g(\nabla_{X_i} X_j, X_l) &= \underbrace{X_i \left(g(X_j, X_l) \right) + X_j \left( g(X_i, X_l) \right) - X_l \left( g(X_i, X_j) \right)}_{\text{part I}} \\
		&\quad + \underbrace{g([X_i, X_j], X_l) - g([X_i, X_l], X_j) - g([X_j, X_l], X_i)}_{\text{part II}}.
	\end{align*}
	
	We first analyze the derivative terms in part I. Since $\gamma$ is smooth at $r=0$, $\partial_r \gamma_{ab}$ and $\partial_{\theta^c} \gamma_{ab}$ (where $c=1, \dots, n-1$) are all bounded. Consequently, the radial derivatives $X_0 \left( g(X_a, X_b) \right) = \partial_r \gamma_{ab}$ are $O(1)$, and the angular derivatives $X_c \left( g(X_a, X_b) \right) = r^{-1} \partial_{\theta^c} \gamma_{ab}$ are $O(r^{-1})$. Thus, all terms in part I are at most $O(r^{-1})$.
	
	For terms in part II, the only non-vanishing commutators for the frame elements are
	$$[X_0, X_a] = \partial_r(r^{-1}\partial_{\theta^a}) - r^{-1}\partial_{\theta^a}\partial_r = -r^{-2}\partial_{\theta^a} = -r^{-1}X_a,$$
	while $[X_a, X_b] = 0$ for all $a,b=1, \dots, n-1$. Therefore, all bracket terms are either $0$ or of the form $g(\pm r^{-1} X_a, X_l)$. Given that $g(X_i, X_j)$ is bounded, those terms are all $O(r^{-1})$. Hence, all terms in part II are $O(r^{-1})$.
	
	Combining the two parts, we have $g(\nabla_{X_i} X_j, X_l) = O(r^{-1})$. This completes the proof.
	
\end{proof}

\vspace{.05in}

\begin{lemma}
	\label{lem:polar_covariant_estimate}
	With the preceding notations, for each positive integer $k \le N$, there exists a constant $C_k > 0$ such that for every vector-valued function $f \in C^N(U_p\setminus \{{p}\}, \mathbb{R}^\nu)$, 
	$$|\nabla_g^k f | \le C_k \sum_{\alpha+|\beta| \le k} r^{\alpha-k} \left |\partial_r^\alpha\partial_\theta^\beta f \right | \quad \text{on } U_p \setminus \{{p}\}.$$
	
\end{lemma}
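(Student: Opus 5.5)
We argue by induction on $k$, computing $\nabla_g^k f$ in the scale-invariant frame $\{X_0,\dots,X_{n-1}\}$ of \eqref{frame}. Because the Gram matrix $(\tilde g_{kl})$ is smooth and uniformly positive definite near $r=0$, the norm of a $(0,k)$-tensor $T$ is comparable, with constants independent of $r$, to $\max_{i_1,\dots,i_k}|T(X_{i_1},\dots,X_{i_k})|$. It therefore suffices to prove the following claim: every frame component $(\nabla_g^k f)(X_{i_1},\dots,X_{i_k})$ is a finite sum of terms $c(r,\theta)\, r^{\alpha-k}\partial_r^\alpha\partial_\theta^\beta f$ with $\alpha+|\beta|\le k$ and $|c|\le C$ uniformly on $U_p\setminus\{p\}$. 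We work componentwise in $f$, so the vector-valued case reduces to the scalar one.

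For the base case $k=1$, we have $(\nabla f)(X_0)=\partial_r f$ and $(\nabla f)(X_a)=r^{-1}\partial_{\theta^a}f$, which are exactly terms with $(\alpha,|\beta|)=(1,0)$ and $(0,1)$. For the inductive step, we use
\begin{equation*}
(\nabla^{k+1}f)(X_{i_0},X_{i_1},\dots,X_{i_k}) = X_{i_0}\bigl((\nabla^k f)(X_{i_1},\dots,X_{i_k})\bigr) - \sum_{s=1}^k\sum_{l}\omega_{i_0 i_s}^{l}\,(\nabla^k f)(X_{i_1},\dots,X_l,\dots,X_{i_k}).
\end{equation*}
By Lemma \ref{lem:polar_connection}, $|\omega^l_{ij}|\le Cr^{-1}$, so each connection term is $r^{-1}$ times a term of the inductive form with index $k$. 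Such a term is $c\, r^{\alpha-(k+1)}\partial_r^\alpha\partial_\theta^\beta f$, which is admissible since $\alpha+|\beta|\le k\le k+1$.

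For the derivative term, apply $X_0=\partial_r$ or $X_a=r^{-1}\partial_{\theta^a}$ to $c\,r^{\alpha-k}\partial_r^\alpha\partial_\theta^\beta f$. Applying $\partial_r$ produces three pieces:
\begin{itemize}
\item $(\partial_r c)\,r^{\alpha-k}\,\partial^\alpha_r\partial^\beta_\theta f$;
\item $(\alpha-k)\,c\,r^{\alpha-k-1}\,\partial^\alpha_r\partial^\beta_\theta f$;
\item $c\,r^{\alpha+1-(k+1)}\,\partial_r^{\alpha+1}\partial_\theta^\beta f$.
\end{itemize}
Applying $r^{-1}\partial_{\theta^a}$ produces
\begin{itemize}
\item $r^{-1}(\partial_{\theta^a}c)\,r^{\alpha-k}\,\partial^\alpha_r\partial^\beta_\theta f$;
\item $c\,r^{\alpha-(k+1)}\,\partial_r^\alpha\partial_\theta^{\beta+e_a}f$.
\end{itemize}
Each of these has the required shape for $k+1$, provided the new coefficients stay bounded. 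The first piece in each list is harmless if $\partial_r c$ and $\partial_\theta c$ are bounded: the extra factor $r$ relative to the target weight $r^{\alpha-(k+1)}$ only helps, so the first $\partial_r$ piece is $O(r)$ times an admissible term.

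The main obstacle is therefore to control the derivatives of the coefficients $c$. To handle this, we strengthen the inductive hypothesis so that each $c$ belongs to the algebra generated by $\tilde g^{kl}$, $\tilde g_{kl}$, the rescaled connection coefficients $r\omega_{ij}^k$, and their derivatives. We then show that this algebra is stable under the operators $r\partial_r$ and $\partial_\theta$ with bounded output. The Koszul computation in the proof of Lemma \ref{lem:polar_connection} expresses $r\omega^k_{ij}$ as a polynomial in $\gamma_{ab}$, $\gamma^{ab}$, $r\partial_r\gamma_{ab}$, $\partial_\theta\gamma_{ab}$ and constants. All of these are smooth functions of $(r,\theta)$ up to $r=0$ on the polar chart, so applying $r\partial_r$ or $\partial_\theta$ again yields bounded functions. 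With this bookkeeping, every coefficient produced in the inductive step is bounded, which establishes the claim.

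Since $f\in C^N$, the induction is valid for all $k\le N$. Combining the claim with the norm equivalence from the first paragraph gives the asserted estimate for $|\nabla_g^k f|$.
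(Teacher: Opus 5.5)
Your proof is correct and follows the paper's argument essentially step for step: the same scale-invariant frame, the same inductive claim $T_I^{(k)}=\sum A_{I,\alpha,\beta}\,r^{\alpha-k}\partial_r^\alpha\partial_\theta^\beta f$ with bounded coefficients, and the same split into a derivative term and connection terms controlled by $|\omega_{ij}^k|\le Cr^{-1}$. Your one addition is to track that the coefficients stay in a class of functions smooth in $(r,\theta)$ up to $r=0$, generated by $\gamma_{ab}$, $\gamma^{ab}$ and $r\omega^k_{ij}$, so that $r\partial_r c$ and $\partial_\theta c$ remain bounded; this makes explicit what the paper simply asserts when it calls the new coefficients ``smooth and bounded.''
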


\begin{proof}
	Since the frame $\{X_0,\dots,X_{n-1}\}$ is uniformly equivalent to an orthonormal frame, there exists $C>0$ such that
	\[
	|\nabla_g^k f| \le C \sum_{i_1,\dots,i_k=0}^{n-1} \big| (\nabla_g^k f)(X_{i_1},\dots,X_{i_k}) \big|.
	\]
	Thus it suffices to estimate the components
	\[
	T_I^{(k)} := (\nabla_g^k f)(X_{i_1},\dots,X_{i_k}).
	\]
	We claim that for each multi-index $I = (i_1, \ldots, i_k)$ of length $k$,
	\begin{equation}
		\label{eqn:component_expansion}
		T_I^{(k)} = \sum_{\alpha+|\beta|\le k} A_{I,\alpha,\beta}(r,\theta)\, r^{\alpha-k}\,\partial_r^\alpha \partial_\theta^\beta f,
	\end{equation}
	where $A_{I,\alpha,\beta}$ are smooth and uniformly bounded functions. This claim immediately implies the desired estimate.
	
	We prove \eqref{eqn:component_expansion} by induction on $k$. For $k=1$, the components of the differential are simply the directional derivatives:
	$$(\nabla_g f)(X_0) = X_0 f = \partial_r f, \qquad (\nabla_g f)(X_a) = X_a f = r^{-1}\partial_{\theta^a} f.$$
	Both expressions match the form of \eqref{eqn:component_expansion} with constant scalar coefficients, satisfying the base case.
	
	Assume the claim holds for some $k\ge 1$. For $j\in\{0,\dots,n-1\}$, the definition of the covariant derivative gives
	\begin{align*}
		T_{jI}^{(k+1)} &:= (\nabla_g^{k+1} f)(X_j, X_{i_1}, \dots, X_{i_k}) \\
		&= X_j \left ( T_I^{(k)} \right ) - \sum_{s=1}^k (\nabla_g^k f) \left (X_{i_1}, \dots, \nabla_{X_j}X_{i_s}, \dots, X_{i_k}\right ) \\
		&= \underbrace{ X_j \left ( T_I^{(k)} \right ) }_{\text{Derivative Term}} - \underbrace{ \sum_{s=1}^k \sum_{l=0}^{n-1} \omega_{j i_s}^l  T_{i_1, \dots, l, \dots, i_k}^{(k)} }_{\text{Connection Terms}}.
	\end{align*}
	
	For the derivative term, if $j=0$, then $X_0=\partial_r$, and differentiating \eqref{eqn:component_expansion} gives
	\begin{align*}
	&	X_0 \left ( T_I^{(k)} \right ) = \partial_r \left ( \sum_{\alpha+|\beta| \leq k} A_{I,\alpha,\beta}(r,\theta) r^{\alpha-k} \partial_r^\alpha \partial_\theta^{\beta} f \right ) \\
		& = \sum_{\alpha+|\beta| \leq k} \Big[ (\partial_r A_{I,\alpha,\beta}) r^{\alpha-k} \partial_r^\alpha \partial_\theta^\beta f + A_{I,\alpha,\beta}(\alpha-k) r^{\alpha-k-1}\partial_r^\alpha \partial_\theta^{\beta} f + A_{I,\alpha,\beta} r^{\alpha-k} \partial_r^{\alpha+1} \partial_\theta^{\beta} f \Big].
	\end{align*}
	By consolidating terms and shifting the indices of summation as needed to absorb the higher derivative, we can rewrite the expression using the original dummy variables $\alpha$ and $\beta$ to obtain
	$$X_0 \left ( T_I^{(k)} \right ) = \sum_{\alpha+|\beta|\le k+1} \widetilde A_{I,\alpha,\beta}(r,\theta)\, r^{\alpha-(k+1)}\, \partial_r^{\alpha} \partial_\theta^\beta f,$$
	where $\widetilde A_{I,\alpha,\beta}$ are smooth and bounded. 
	
	If $j=a$ for $a=1, \dots, n-1$, then $X_a=r^{-1}\partial_{\theta^a}$, and similar differentiation yields
	\begin{align*}
		X_a \left ( T_I^{(k)} \right ) & = r^{-1}\partial_{\theta^a} \left ( \sum_{\alpha+|\beta| \leq k} A_{I,\alpha,\beta}(r,\theta) r^{\alpha-k} \partial_r^\alpha \partial_\theta^\beta f \right ) \\
		& = \sum_{\alpha+|\beta| \leq k} \left [ (\partial_{\theta^a} A_{I,\alpha,\beta}) r^{\alpha-(k+1)} \partial_r^\alpha \partial_\theta^\beta f + A_{I,\alpha,\beta} r^{\alpha-(k+1)} \partial_r^\alpha \partial_\theta^{\beta+e_a} f \right ].
	\end{align*}
	Again, by shifting the multi-index in the second term to absorb $e_a$ (the $a$-th standard basis vector) and relabeling the dummy variables back to $\alpha$ and $\beta$, we find
	$$X_a \left ( T_I^{(k)} \right ) = \sum_{\alpha+|\beta|\le k+1} \widetilde A_{I,\alpha,\beta}(r,\theta)\, r^{\alpha-(k+1)}\, \partial_r^\alpha \partial_{\theta}^{\beta} f,$$
	with smooth, bounded coefficients $\widetilde A_{I,\alpha,\beta}$. Hence, the desired structure is preserved for the derivative term.
	
	For the connection terms, Lemma \ref{lem:polar_connection} implies
	$$\omega_{j i_s}^{l} = r^{-1} B_{j i_s}^{l}(r,\theta),$$
	where $B_{j i_s}^{l}$ is smooth and bounded. Combining this with the inductive expression for $T_{i_1,\dots,l,\dots,i_k}^{(k)}$, we obtain a linear combination of terms of the form
	$$\widetilde A_{I,\alpha,\beta}(r,\theta)\, r^{\alpha-(k+1)}\, \partial_r^\alpha \partial_\theta^\beta f,$$
	where $\widetilde A_{I,\alpha,\beta}$ is again smooth and bounded. Therefore, the desired structure is preserved for the connection terms as well. Altogether, the claim holds for $k+1$, completing the induction.
\end{proof}

Applying Lemma \ref{lem:polar_covariant_estimate} to a geodesically radially symmetric function $u$ yields the following simple estimate immediately.

\begin{lemma}
	\label{lemma:covariant_bound}
	Let $(M, g)$ be a Riemannian manifold, ${p} \in M$, and $N\in \mathbb Z^+$. Let $U_p$ be a normal neighborhood of $p$, and $r = d_g(\cdot, {p})$ be the geodesic distance from $p$. Then for each nonnegative integer $j\le N$, there exists a constant $C > 0$ such that, for every function $u = \phi(r) \in C^N(U_p \setminus \{p\}, \mathbb{R}^\nu)$ that is geodesically radially symmetric in $U_p$ about $p$, we have
	\begin{equation}
		\label{eqn:nabla_g_bound}
		|\nabla^{j}_g u| \leq C \sum_{k=1}^j |\phi^{(k)}(r)| \, r^{k-j} \quad \text{on} \,\,\, U_p \setminus \{{p}\}.
	\end{equation}
\end{lemma}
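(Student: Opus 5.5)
This is a direct application of Lemma \ref{lem:polar_covariant_estimate}. Take $u=\phi(r)$ with $\phi\in C^N$ on the radial range and $1\le j\le N$; the case $j=0$ is trivial, since then $|\nabla_g^0 u|=|\phi(r)|$ and we read the bound with the $k=0$ term included, or restrict to $j\ge1$ as the statement's sum indicates. Lemma \ref{lem:polar_covariant_estimate} with $k=j$ and $f=u$ gives
\[
|\nabla_g^j u|\le C_j\sum_{\alpha+|\beta|\le j} r^{\alpha-j}\,|\partial_r^\alpha\partial_\theta^\beta u| .
\]
Because $u$ depends only on $r$ in geodesic polar coordinates, every term with $|\beta|\ge1$ vanishes. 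For $\beta=0$ we have $\partial_r^\alpha u=\phi^{(\alpha)}(r)$, so the bound reduces to $C_j\sum_{\alpha=0}^{j} r^{\alpha-j}|\phi^{(\alpha)}(r)|$.

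What remains is to remove the $\alpha=0$ term $r^{-j}|\phi(r)|$, since the claimed sum starts at $k=1$. I would go back to the component expansion \eqref{eqn:component_expansion} in the proof of Lemma \ref{lem:polar_covariant_estimate} and show by induction on $j$ that, for radial $f$, the coefficients $A_{I,0,0}$ vanish identically. The reason is that each component $T_I^{(j)}$ is obtained from first derivatives $X_if$ by repeatedly applying derivatives $X_j(\cdot)$ and connection terms, both of which act linearly on components of lower order. In the base case $j=1$ the components are $\partial_r f$ and $r^{-1}\partial_{\theta^a}f$, so no undifferentiated $f$ appears. In the inductive step, both the derivative terms and the connection terms only differentiate or multiply existing terms $\partial_r^\alpha\partial_\theta^\beta f$, and never lower $\alpha+|\beta|$ to zero once it is positive. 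Hence every term in \eqref{eqn:component_expansion} has $\alpha+|\beta|\ge1$. For radial $f$ this means $\alpha\ge1$.

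The expected obstacle is only this bookkeeping: the original lemma's index set $\alpha+|\beta|\le k$ allows $(0,0)$, so I need the refinement $1\le\alpha+|\beta|\le k$. This holds because $\nabla_g^j u=\nabla_g^{j-1}(du)$ is built from $du$. Equivalently, the refinement follows by applying the lemma's structure to the $1$-form $du$ instead of $u$. Once that is in place, the estimate \eqref{eqn:nabla_g_bound} follows with $C=C_j$, uniformly over all radial $u$.
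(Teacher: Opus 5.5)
Your proposal is correct and follows the paper's route: the paper simply applies Lemma \ref{lem:polar_covariant_estimate} to the radial $u$ and calls the bound immediate. Your induction showing that every term of \eqref{eqn:component_expansion} has $\alpha+|\beta|\ge 1$ supplies the justification, left implicit in the paper, for dropping the $r^{-j}|\phi(r)|$ term. Your caveat about $j=0$ (where the sum is empty, so the bound as stated would force $\phi\equiv 0$) is also well taken; it is harmless, because the paper's application in \eqref{eqn:nabla_j_u_bound_Riem} keeps the $k=0$ term anyway.
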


\vspace{.1in}

Next, we consider the Laplace-Beltrami operator $\Delta_g$. Let $|g| = \det(g_{ij})$ denote the determinant of the metric tensor. In geodesic polar coordinates $(r, \theta)\in V$, the Laplacian of a function $u$ is given by (cf. \cite{Petersen})
\begin{align*}
	\Delta_g u &
	= \frac{1}{\sqrt{|g|}} \frac{\partial}{\partial r} \left( \sqrt{|g|}  \frac{\partial u}{\partial r} \right) + \frac{1}{\sqrt{|g|}} \sum_{a,b=1}^{n-1} \frac{\partial}{\partial \theta^a} \left( \sqrt{|g|}  g^{ab} \frac{\partial u}{\partial \theta^b} \right)\\
	& = \frac{\partial^2 u}{\partial r^2} + \frac{1}{\sqrt{|g|}} \frac{\partial \sqrt{|g|}}{\partial r} \frac{\partial u}{\partial r} + \frac{1}{\sqrt{|g|}} \sum_{a,b=1}^{n-1} \frac{\partial}{\partial \theta^a} \left( \sqrt{|g|}  g^{ab} \frac{\partial u}{\partial \theta^b} \right) .
\end{align*}

Since the metric on the geodesic spheres $S_r$ scales as $g_{ab} = r^2\gamma_{ab}$ for $1 \le a, b \le n-1$, the determinant satisfies $\sqrt{|g|} = r^{n-1}\sqrt{|\gamma|}$. A direct computation then yields
\begin{equation}
	\frac{1}{\sqrt{|g|}} \frac{\partial \sqrt{|g|}}{\partial r} = \frac{1}{r^{n-1}\sqrt{|\gamma|}} \frac{\partial (r^{n-1}\sqrt{|\gamma|})}{\partial r} = \frac{n-1}{r} + \frac{1}{ \sqrt{|\gamma|}} \frac{\partial \sqrt{|\gamma|}}{\partial r}.
\end{equation}
Recall that the family $\gamma(r, \cdot)$ is smooth and uniformly positive definite for $r\in[0, r_\theta)$. By the expansion of Jacobi fields in normal coordinates, the metric components $g_{ij}$ and the volume density $\sqrt{|g|}$ are even functions of $r$ near the origin. Specifically, $\sqrt{|\gamma|} = 1 + O(r^2)$, which implies that the function
\begin{equation*}
	\frac{1}{\sqrt{|\gamma|}} \frac{\partial \sqrt{|\gamma|}}{\partial r} = \frac{\partial}{\partial r} \log \sqrt{|\gamma|}
\end{equation*}
is an odd smooth function with respect to $r$ near the origin. It follows that we can write
\begin{equation}\label{hh}
	\frac{1}{\sqrt{|\gamma|}} \frac{\partial \sqrt{|\gamma|}}{\partial r} = r\Psi(r, \theta)
\end{equation}
for some function $\Psi$ that is smooth on $ \overline{V} $.
Geometrically, the leading term is determined by the Ricci curvature, with $\Psi(0, \theta) = -\frac{1}{3} \Ric_{{p}}(\partial_r, \partial_r)$.

Furthermore, the angular part of the Laplacian simplifies as
\begin{equation}
	\label{spLap}
	\frac{1}{\sqrt{|g|}} \sum_{a,b=1}^{n-1} \frac{\partial}{\partial \theta^a} \left( \sqrt{|g|}  g^{ab} \frac{\partial u}{\partial \theta^b} \right) = \frac{1}{r^2}  \frac{1}{\sqrt{|\gamma|}}  \sum_{a,b=1}^{n-1}\frac{\partial}{\partial \theta^a} \left( \sqrt{|\gamma|}\gamma^{ab}\frac{\partial u}{\partial \theta^b} \right)  := \frac{1}{r^2} \Delta_{\gamma(r,\theta)} u,
\end{equation}
where $\Delta_{\gamma(r,\theta)}$ denotes the Laplace-Beltrami operator on the unit sphere $S^{n-1}$ with respect to the metric $\gamma(r, \theta)$. Consequently, we obtain the decomposition

\begin{equation}
	\label{eqn:Riem_Delta_u}
	\Delta_g u  = \underbrace{\left (\frac{\partial^2 u}{\partial r^2} + \frac{n-1}{r} \frac{\partial u}{\partial r} \right )}_{ \Delta^{\text{rad}} u } + r \Psi (r, \theta) \frac{\partial u}{\partial r} + \frac{1}{r^2}\,\Delta_{\gamma(r,\theta)} u,
\end{equation}
where $$\Delta^{\text{rad}} u=\frac{\partial^2 u}{\partial r^2} + \frac{n-1}{r} \frac{\partial u}{\partial r}$$ denotes the radial part of the flat Euclidean Laplacian.

\vspace{.1in}

For a geodesically radial function $u = \phi(r)$, we have
$$	\Delta^{\text{rad}}  u =  \mathcal{D}_{n-1}\mathcal{D}_0      \phi(r),$$
where the operator $ \mathcal{D}_b$ for $ b\in \mathbb C$ is defined as in \eqref{1st}. Repeated application of \eqref{eqn:Riem_Delta_u} to such functions yields the following decomposition for higher powers of the Laplace--Beltrami operator.  

\begin{lemma}
	\label{lemma:Delta_g^m}
	Let $(M, g)$ be a Riemannian manifold, $m\in \mathbb Z^+$ and $p \in M$. Let $U_p$ be a normal neighborhood of $p$, and let $r = d_g(\cdot, p)$ be the geodesic distance from $p$. Suppose $u =\phi(r) \in C^{2m}(U_p \setminus \{p\}, \mathbb{R}^\nu)$ is geodesically radially symmetric in $U_p$ about $p$. Then
	\begin{equation}
		\label{eqn:Delta_g_and_flat}
		\Delta_g^m u = \left( \mathcal{D}_{n-1}\mathcal{D}_0\right)^m \phi(r) + \sum_{k=1}^{2m-1}  E_{m,k}(r, \theta)  \frac{\phi^{(k)}(r)}{r^{2m-k-2}} \quad \text{on} \,\,\, U_p\setminus \{ p\},
	\end{equation}
	where each scalar coefficient function $E_{m,k}(r, \theta)$ extends smoothly to $\overline{V}$.
\end{lemma}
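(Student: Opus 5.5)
We argue by induction on $m$, using the decomposition \eqref{eqn:Riem_Delta_u}. For a radial function $u=\phi(r)$ the angular term $r^{-2}\Delta_{\gamma(r,\theta)}u$ vanishes, because $\Delta_{\gamma}$ only differentiates in $\theta$. Hence
\[
\Delta_g u = \mathcal D_{n-1}\mathcal D_0\phi + r\Psi(r,\theta)\phi'(r),
\]
which is the case $m=1$ with $E_{1,1}=\Psi$: the exponent is $2m-k-2=-1$, so $\phi'/r^{-1}=r\phi'$. Note that $\Delta_g u$ is no longer radial, since $\Psi$ depends on $\theta$. The induction must therefore track functions of the form $F(r,\theta)=G(r)+\sum_k E_k(r,\theta)\,r^{\,k+2-2m}\phi^{(k)}(r)$, and the angular part of $\Delta_g$ acts nontrivially on the error terms.

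For the inductive step, assume \eqref{eqn:Delta_g_and_flat} holds for $m$ and apply $\Delta_g$ to both sides using \eqref{eqn:Riem_Delta_u}.

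\emph{Main term.} The main term $G=(\mathcal D_{n-1}\mathcal D_0)^m\phi$ is radial. It gives $(\mathcal D_{n-1}\mathcal D_0)^{m+1}\phi + r\Psi G'$. Expanding $G$ as a combination $\sum_{k=1}^{2m} c_k r^{k-2m}\phi^{(k)}$ with constant coefficients (using the Euler form of $\mathcal D_{n-1}\mathcal D_0$, and noting there is no $\phi$ term since $\mathcal D_0\phi=\phi'$), we get $r G' = \sum_{k=1}^{2m+1} c'_k r^{k-2m}\phi^{(k)}$. This matches the target form $r^{k-2(m+1)+2}\phi^{(k)}=r^{k-2m}\phi^{(k)}$ for $k\le 2m+1=2(m+1)-1$, with smooth coefficients $c'_k\Psi$.

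\emph{Error terms.} Consider a generic term $T=E(r,\theta)\,r^{\,k+2-2m}\phi^{(k)}$ with $k\le 2m-1$. Each of the three operators acting on $T$ costs exactly $r^{-2}$ relative to the weight $r^{k+2-2m}$, or trades one power of $r$ for one derivative of $\phi$:
\begin{itemize}
\item $\partial_r^2 T$ produces terms $r^{j+2-2m-2}\phi^{(j)}$ with $k\le j\le k+2$, times smooth coefficients built from $E,\partial_rE,\partial_r^2E$ and constants.
\item $\frac{n-1}{r}\partial_r T$ behaves the same way.
\item $r\Psi\partial_r T$ is even better, by a factor $r^2$.
\item $r^{-2}\Delta_{\gamma}T = r^{k-2m}\phi^{(k)}\,\Delta_{\gamma(r,\theta)}E$, with $\Delta_\gamma E$ smooth on $\overline V$ because $\gamma$ is a smooth, uniformly positive definite family up to $r=0$.
\end{itemize}
All these terms have the form $\tilde E\, r^{j+2-2(m+1)}\phi^{(j)}$ with $1\le k\le j\le k+2\le 2m+1$, so they fit the claimed form. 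Collecting everything gives \eqref{eqn:Delta_g_and_flat} for $m+1$.

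The step needing the most care is the claim that the coefficients stay smooth on $\overline V$ rather than acquiring negative powers of $r$. This rests on three facts. First, $\Psi$ in \eqref{hh} is smooth up to $r=0$; this uses the evenness of $\sqrt{|\gamma|}$, which is what allows the identity $\partial_r\log\sqrt{|\gamma|}=r\Psi$. Second, $\partial_r$ and $\partial_\theta$ derivatives of smooth functions on $\overline V$ remain smooth. Third, all singular powers of $r$ are recorded explicitly in the weights $r^{k+2-2m}$. One must also check that the index $k=0$ never appears: the radial operators always differentiate $\phi$ at least once, and $\Delta_\gamma$ does not lower the order $k$. This justifies the sum starting at $k=1$.
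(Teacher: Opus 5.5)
Your proof is correct and follows the paper's argument. It is an induction on $m$: the radial main term is handled by the $m=1$ formula, and each error term $E_{m,k}\,\phi^{(k)}/r^{2m-k-2}$ is shown to produce only terms $\tilde E\,\phi^{(j)}/r^{2m-j}$ with $k\le j\le k+2$ and coefficients smooth on $\overline V$. The only difference is cosmetic: the paper expands the error terms with the product rule $\Delta_g(Ef)=E\Delta_g f+f\Delta_g E+2\langle\nabla_g E,\nabla_g f\rangle_g$ for radial $f$, while you substitute the product directly into \eqref{eqn:Riem_Delta_u}, which is the same computation.
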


\begin{proof}
	We proceed by induction on $m$.
	The base case $m=1$ follows directly from \eqref{eqn:Riem_Delta_u}. Since $u=\phi(r)$ is a radial function, the angular derivatives all vanish. Thus 	
	\begin{equation}
		\label{eqn:Delta_g_1}
		\Delta_g u =\mathcal{D}_{n-1}\mathcal{D}_0 \phi(r)  + r\Psi(r,\theta) \phi'(r) = \mathcal{D}_{n-1}\mathcal{D}_0 \phi(r) + E_{1,1}(r,\theta) \frac{\phi'(r)}{r^{-1}},
	\end{equation}
	where $E_{1,1}:=\Psi$ is a smooth function on $\overline{V}$.
	
	Now, assume the expansion \eqref{eqn:Delta_g_and_flat} holds for $\Delta_g^m$. We will show that it holds for $\Delta_g^{m+1}$. By the inductive hypothesis,
	\begin{equation}
		\label{eqn:Delta_m+1_decomp}
		\Delta_g^{m+1} u = \Delta_g \left ( \Delta_g^{m} u \right ) = \Delta_g \big(\left( \mathcal{D}_{n-1}\mathcal{D}_0\right)^m \phi(r)  \big ) + \sum_{k=1}^{2m-1} \Delta_g \left ( E_{m,k}(r,\theta) \frac{\phi^{(k)}(r)}{r^{2m-k-2}} \right ) ,
	\end{equation}
	where $E_{m,k}$ are all smooth functions on $\overline{V}$.
	Recall that by Lemma \ref{ele2} there exist constants $a_1, \dots, a_{2m}$ such that
	\begin{equation*}
		\left( \mathcal{D}_{n-1}\mathcal{D}_0\right)^m \phi(r)= \sum_{i=1}^{2m} a_i \frac{\phi^{(i)}(r)}{r^{2m-i}}.
	\end{equation*}
	(The sum starts at $i=1$ because of the initial $\mathcal{D}_0$ operator). Applying \eqref{eqn:Delta_g_1} to $\left( \mathcal{D}_{n-1}\mathcal{D}_0\right)^m \phi(r)$ yields
	\begin{align*}
		\Delta_g \left( \left( \mathcal{D}_{n-1}\mathcal{D}_0\right)^m \phi(r) \right) 
		= \left( \mathcal{D}_{n-1}\mathcal{D}_0\right)^{m+1} \phi(r) + E_{1,1}(r,\theta)  \sum_{i=1}^{2m+1} \tilde{a}_i \frac{\phi^{(i)}(r)}{r^{2m-i}}
	\end{align*}
	for some constants $\tilde{a}_1, \dots, \tilde{a}_{2m+1}$. Hence, \eqref{eqn:Delta_m+1_decomp} becomes
	\begin{align*}
		\Delta_g^{m+1} u - \left( \mathcal{D}_{n-1}\mathcal{D}_0\right)^{m+1} \phi(r) & = E_{1,1}(r,\theta) \sum_{i=1}^{2m+1} \tilde{a}_i \frac{\phi^{(i)}(r)}{r^{2m-i}}  + \sum_{k=1}^{2m-1} \Delta_g \left( E_{m,k}(r,\theta) \frac{\phi^{(k)}(r)}{r^{2m-k-2}} \right) \\
		& =  \sum_{i=1}^{2m+1} \tilde{a}_i E_{1,1}(r,\theta)\frac{\phi^{(i)}(r)}{r^{2(m+1)-i-2}} + \sum_{k=1}^{2m-1} \Delta_g \left( E_{m,k}(r,\theta) \frac{\phi^{(k)}(r)}{r^{2m-k-2}} \right) .
	\end{align*}
	
	It remains to show that each term $ \Delta_g \left( E_{m,k}(r,\theta) \frac{\phi^{(k)}(r)}{r^{2m-k-2}} \right)$ can be expressed as a linear combination of the form $$E_j(r,\theta) \frac{\phi^{(j)}(r)}{r^{2(m+1)-j-2}} = E_j(r,\theta) \frac{\phi^{(j)}(r)}{r^{2m-j}},$$ where $j=1, \ldots, 2m+1$ and $E_j(r,\theta)$ are smooth functions on $\overline{V}$. By the product rule for Laplacian and applying \eqref{eqn:Delta_g_1} to $\frac{\phi^{(k)}(r)}{r^{2m-k-2}} $, for each $k=1, \ldots, 2m-1$, we have
	\begin{align}
		\label{eqn:Delta_g_E_induction}
		& \quad	\Delta_g \left( E_{m,k} \frac{\phi^{(k)}(r)}{r^{2m-k-2}} \right) \nonumber \\ &= E_{m,k} \Delta_g \left( \frac{\phi^{(k)}(r)}{r^{2m-k-2}}\right) + \frac{\phi^{(k)}(r)}{r^{2m-k-2}} \Delta_g E_{m,k}  + 2\left\langle \nabla_g E_{m,k}, \nabla_g \left(\frac{\phi^{(k)}(r)}{r^{2m-k-2}} \right)\right \rangle_g \nonumber \\
		&= E_{m,k} \mathcal{D}_{n-1}\mathcal{D}_0 \left(\frac{\phi^{(k)}(r)}{r^{2m-k-2}}\right) + E_{m,k} E_{1,1} \frac{1}{r^{-1}}  \frac{d}{dr}\left(\frac{\phi^{(k)}(r)}{r^{2m-k-2}}\right)\\
		&\quad + \frac{\phi^{(k)}(r)}{r^{2m-k-2}}  \Delta_g E_{m,k}  + 2 \left (\partial_r E_{m,k} \right ) \frac{d}{dr}\left(\frac{\phi^{(k)}(r)}{r^{2m-k-2}}\right). \nonumber
	\end{align}
	In the last step, we used the facts that  $\frac{\phi^{(k)}(r)}{r^{2m-k-2}}$ is a purely radial function, the radial vector field $\partial_r$ has unit norm, and $\partial_r$ is orthogonal to the geodesic spheres.
	
	For the first term of \eqref{eqn:Delta_g_E_induction}, using $\mathcal{D}_{n-1}\mathcal{D}_0 = \partial_r^2 + \frac{n-1}{r}\partial_r$, we obtain
	\begin{equation*}
		E_{m,k}\mathcal{D}_{n-1}\mathcal{D}_0 \left(\frac{\phi^{(k)}(r)}{r^{2m-k-2}}\right) = E_{m,k} \sum_{j=k}^{k+2} c_j \frac{\phi^{(j)}(r)}{r^{2m-j}}
	\end{equation*}
	for some constants $c_k, c_{k+1}, c_{k+2}$. Similarly, the second and fourth terms can be written as
	\begin{align*}
		E_{m,k}E_{1,1} \frac{1}{r^{-1}} \frac{d}{dr}\left(\frac{\phi^{(k)}(r)}{r^{2m-k-2}}\right) &= r^2E_{m,k}E_{1,1} \sum_{j=k}^{k+1} \tilde{c}_j \frac{\phi^{(j)}(r)}{r^{2m-j}},\\
		2\left (\partial_r E_{m,k} \right ) \frac{d}{dr}\left(\frac{\phi^{(k)}(r)}{r^{2m-k-2}}\right)  &= \left ( 2r \partial_r E_{m,k} \right ) \sum_{j=k}^{k+1} \tilde c_j \frac{\phi^{(j)}(r)}{r^{2m-j}}
	\end{align*}
	for some constants $\tilde{c}_k, \tilde{c}_{k+1}$.
	
	It remains to consider the third term. Applying \eqref{eqn:Riem_Delta_u} to $E_{m,k}$ gives
	\begin{align*}
		\frac{\phi^{(k)}(r)}{r^{2m-k-2}}\Delta_g E_{m,k}
		& =  \left (\partial_r^2 E_{m,k} + \frac{n-1}{r}\partial_r E_{m,k} + r \Psi \partial_r E_{m,k}+ \frac{1}{r^2}\Delta_{\gamma(r,\theta)}E_{m,k} \right ) \frac{\phi^{(k)}(r)}{r^{2m-k-2}}\\
		& =  \left ( r^2\partial_r^2 E_{m,k} + (n-1) r\partial_r E_{m,k} + r^3 \Psi \partial_r E_{m,k}+ \Delta_{\gamma(r,\theta)}E_{m,k} \right ) \frac{\phi^{(k)}(r)}{r^{2m-k}}.
	\end{align*}
	Thus each contribution has the required form, with all the resulting coefficients smooth on $\overline{V}$. Hence the decomposition \eqref{eqn:Delta_g_and_flat} holds with $m$ replaced by $m+1$. This completes the proof by induction.	
\end{proof}

The decomposition in Lemma~\ref{lemma:Delta_g^m} can be seen explicitly in the constant-curvature model spaces. The following example shows that, near the pole, the Laplace-Beltrami operator separates into the Euclidean radial Laplacian and curvature-dependent lower-order terms.

\begin{example}
	\upshape
	Consider the action of the Laplace-Beltrami operator on a radial function
	$u=\phi(r)$ in the standard simply connected space forms.
	
	For the standard unit sphere $\mathbb{S}^n$, the operator is given by $$\Delta_{\mathbb{S}^n} \phi = \phi'' + (n-1)\cot(r) \phi'.$$
	Expanding $\cot(r)$ near $r=0$ yields
	$$	\Delta_{\mathbb{S}^n} \phi = \underbrace{\phi''(r) + \frac{n-1}{r}\phi'(r)}_{\Delta^{\text{rad}} u} - \frac{n-1}{3}r\phi'(r) + O(r^3)\phi'(r).
	$$
	
	Similarly, for the hyperbolic space $\mathbb{H}^n$, one has
	$$\Delta_{\mathbb{H}^n} \phi = \phi'' + (n-1)\coth(r) \phi'.$$
	Expanding $\coth(r)$ near $r=0$ yields
	$$ 		\Delta_{\mathbb{H}^n} \phi = \underbrace{\phi''(r) + \frac{n-1}{r}\phi'(r)}_{\Delta^{\text{rad}} u} + \frac{n-1}{3}r\phi'(r) + O(r^3)\phi'(r).
	$$
	Thus, in both cases, the Euclidean radial Laplacian appears as the principal
	singular part of the operator. The curvature contributes lower-order terms of
	size $O(r)\phi'(r)$, illustrating that the deviation of the metric from the
	Euclidean metric affects only the admissible error terms and not the leading
	singular structure.
\end{example}

Finally, to prove Theorem \ref{thm:manifold_tri}, we use the following observation: all odd-order radial derivatives of a smooth, locally radially symmetric function vanish at the center.

\begin{lemma}
	\label{odd}
	Let $(M, g)$ be a Riemannian manifold, $m\in \mathbb Z^+$ and ${p} \in M$. Let $U_p$ be a normal neighborhood of ${p}$, and let $r = d_g(\cdot, p)$ be the geodesic distance from $p$. Suppose a vector-valued function $u\in C^{2m}(U_p, \mathbb{R}^\nu)$ is locally geodesically radially symmetric about $p$. Then all odd-order radial derivatives of $u$ at $p$ up to order $2m-1$ vanish:
	\begin{equation*}
		\partial_r^{2j-1}u(p) = 0, \qquad j=1,\ldots,m.
	\end{equation*}
\end{lemma}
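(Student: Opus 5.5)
The plan is to restrict $u$ to a single geodesic through $p$ and use the fact that the restriction is even in the arclength parameter. Fix a unit vector $\theta\in T_pM$ and consider the geodesic $\sigma(t)=\exp_p(t\theta)$, defined for $t\in(-\varepsilon,\varepsilon)$ with $\varepsilon>0$ small enough that $\sigma$ stays in $U_p$. This is possible because $U_p$ is the image of a star-shaped neighborhood of $0$, which contains a small ball.

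Along $\sigma$ we have $d_g(\sigma(t),p)=|t|$ for small $|t|$. Radial symmetry $u=\phi(r)$ therefore gives
\[
\psi(t):=u(\sigma(t))=\phi(|t|).
\]
Since $u\in C^{2m}(U_p)$ and $\sigma$ is smooth, the composition $\psi$ lies in $C^{2m}((-\varepsilon,\varepsilon))$. Moreover $\psi(-t)=\psi(t)$, so $\psi$ is an even function of class $C^{2m}$.

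Differentiating $\psi(-t)=\psi(t)$ $k$ times gives
\[
(-1)^k\psi^{(k)}(-t)=\psi^{(k)}(t), \qquad k\le 2m.
\]
Setting $t=0$ with $k$ odd yields $\psi^{(k)}(0)=0$ for every odd $k\le 2m-1$.

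It remains to identify $\psi^{(k)}(0)$ with $\partial_r^k u(p)$, which the paper defines as $\phi^{(k)}(0)$. For $t>0$ we have $\psi(t)=\phi(t)$, so $\phi$ coincides with $\psi$ on $[0,\varepsilon)$. Hence $\phi\in C^{2m}([0,\varepsilon))$, and its one-sided derivatives at $0$ satisfy $\phi^{(k)}(0)=\psi^{(k)}(0)$. This holds because the derivatives of the $C^{2m}$ function $\psi$ are continuous, so the right-hand limits of $\phi^{(k)}$ agree with $\psi^{(k)}(0)$. Combining with the previous step gives $\partial_r^{2j-1}u(p)=\phi^{(2j-1)}(0)=0$ for $j=1,\dots,m$.

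The only delicate point is this identification of the radial profile's one-sided derivatives at the origin with derivatives of a genuinely $C^{2m}$ function. Working along a full geodesic line through $p$, rather than only in polar coordinates on the punctured neighborhood, handles it: regularity at $r=0$ then comes directly from the $C^{2m}$ regularity of $u$ at $p$, and no expansion of the metric is needed.
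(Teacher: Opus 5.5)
Your proposal is correct and follows the paper's proof essentially verbatim: you restrict $u$ to the geodesic $t\mapsto\exp_p(t\theta)$, observe that the resulting $C^{2m}$ function equals $\phi(|t|)$ and is therefore even, and conclude that its odd derivatives vanish at $0$ and coincide with $\phi^{(2j-1)}(0)$. Your extra care in restricting to small $|t|$, so that $d_g(\exp_p(t\theta),p)=|t|$, and in justifying the identification of the one-sided derivatives of $\phi$ at $0$ usefully makes explicit details that the paper leaves implicit.
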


\begin{proof}
	Fix a unit vector $\theta\in S^{n-1} \subset T_{{p}}M$, and consider the geodesic $\gamma_{\theta}(t)=\exp_{{p}}(t\theta)$, $|t|<r_\theta.$
	Define
	\[
	\widetilde\phi(t):=u(\gamma_{\theta}(t)).
	\]
	Since $u\in C^{2m}(U_p, \mathbb{R}^\nu)$ and $\gamma_{\theta}$ is smooth, we have
	$\widetilde\phi\in C^{2m}((-r_\theta,r_\theta), \mathbb{R}^\nu). $
	Moreover, by local geodesic radial symmetry,
	\[
	\widetilde\phi(t)
	=
	u(\exp_{{p}}(t\theta))
	=
	\phi(d_g({p},\exp_{{p}}(t\theta)))
	=
	\phi(|t|).
	\]
	Hence $\widetilde\phi$ is an even function. Therefore,
	\[
	\widetilde\phi^{(q)}(0)=0
	\]
	for every odd integer $q\le 2m-1$. Since the $q$-th radial derivative of $u$ at ${p}$ is identified with $\phi^{(q)}(0)=\widetilde\phi^{(q)}(0)$, we obtain
	\[
	\partial_r^{2j-1}u({p})=0,
	\qquad j=1,\ldots,m.
	\]
	This proves the lemma.
	
\end{proof}

\begin{proof}[\textbf{Proof of Theorem \ref{thm:manifold_tri}}: ]
	
	Let $u\in C^{2m} (U_p, \mathbb{R}^\nu) $ satisfy the inequality \eqref{eqn:Delta_g^m_inequality} and be geodesically radially symmetric in $U_p$ about $p$. 
	To establish the theorem, it suffices to show that if $\partial^{2j}_{r} u(p) = 0$ for all $j = 0, \dots, m-1$, then $u\equiv 0$ on $U_p$.
	
    First,  it follows from Lemma \ref{odd} that all the radial derivatives of $u$ up to order $2m-1$ vanish at $p$. 
	Let $\phi(r)$ be the associated radial profile such that $u = \phi(r)$ for $r \in [0, r_\theta)$. Then 
	\[
	\phi^{(j)}(0) = 0, \qquad j = 0, \dots, 2m-1.
	\]
	We claim that these vanishing jets up to order $2m-1$, under the constraint of \eqref{eqn:Delta_g^m_inequality}, imply that $u \equiv 0$ in $U_p$.
	
	By Lemma \ref{lemma:covariant_bound}, the covariant derivatives of $u$ satisfy the bound
	\[
	|\nabla^{j}_g u| \leq C_0 \sum_{k=1}^j |\phi^{(k)}(r)| \, r^{k-j} \ \ \text{on}\ U_p\setminus \{p\}
	\]
	for $j\ge 1$. Thus, there exists a constant $C_1 > 0$ such that
	\begin{equation}
		\label{eqn:nabla_j_u_bound_Riem}
		\sum_{j=0}^{2m-1} |\nabla_g ^{j} u| \leq C_0 \sum_{j=0}^{2m-1}\sum_{k=0}^{j}\frac{| \phi^{(k)}(r)|}{r^{j-k}} \leq C_1\sum_{k=0}^{2m-1}\frac{| \phi^{(k)}(r)|}{r^{2m-1-k}} \ \ \text{on}\ U_p\setminus \{p\}.
	\end{equation}
	
	On the other hand, by Lemma \ref{lemma:Delta_g^m},
	\[
	\Delta_g^m u = \left( \mathcal{D}_{n-1}\mathcal{D}_0\right)^{m} \phi(r)   + \sum_{k=1}^{2m-1} E_{m,k}(r, \theta) \frac{ \phi^{(k)}(r)}{r^{2m-k-2}}  \ \ \text{on}\ U_p\setminus \{p\}, 
	\]with
     $$|E_{m,k}(r, \theta)| \leq C_2  \ \ \text{on} \ V $$ for a  constant $C_2 > 0$. Therefore,  
	\allowdisplaybreaks
	\begin{align*}
		\left | \left (\mathcal{D}_{n-1} \mathcal{D}_0 \right)^m \phi (r) \right | & \leq \left | \Delta_g^m u \right | + C_2\sum_{k=1}^{2m-1} \frac{\left | \phi^{(k)}(r) \right |}{r^{2m-k-2}} \\
		& \leq C \sum_{j=0}^{2m-1} | \nabla_g^j u | + C_2\sum_{k=1}^{2m-1} \frac{\left | \phi^{(k)}(r) \right |}{r^{2m-k-2}} \\
		& \leq C \cdot C_1 \sum_{k=0}^{2m-1}\frac{| \phi^{(k)}(r)|}{r^{2m-1-k}}  + C_2\sum_{k=1}^{2m-1} \frac{\left | \phi^{(k)}(r) \right |}{r^{2m-k-2}} \\
		&= \left(CC_1 r + C_2 r^2\right) \sum_{k=0}^{2m-1} \frac{\left | \phi^{(k)}(r) \right |}{r^{2m-k}}
	\end{align*}
for $r\in (0,   r_0)$,	where  $ r_0: = \sup_{\theta\in S^{n-1}}  r_\theta<\infty $.	By Theorem \ref{gron}, we know $\phi \equiv 0$ for $r\in [0,   r_0)$, and equivalently, $u \equiv 0$ in $ U_p$.

\end{proof}

\begin{remark}
	\label{remark:h(r)_over_r}
\upshape

The above estimates show that the conclusion of Theorem \ref{thm:manifold_tri} still holds if condition \eqref{eqn:Delta_g^m_inequality} is replaced by the weaker inequality 

$$|\Delta_g^m u| \leq \frac{h(r)}{r} \sum_{j=0}^{2m-1} |\nabla_g^{j} u| \quad \text{on} \,\,\, U_p \setminus \{p\},$$
where $h  $ is a non-negative continuous function near $0$ with $h(0) = 0$. Indeed, applying this weaker bound essentially replaces the $C_1r$ term in the final estimate with $C_1h(r)$, yielding an overall coefficient bounded by $(C_1h(r) + C_2 r^2)$. Since this still vanishes at $r = 0$, Theorem \ref{gron} applies just as before.
	
\end{remark}

\hspace{.05in}

We next consider the following quasilinear Poisson system near $p \in M$:
$$
\left\{
\begin{aligned}
	&\Delta_g^m u = f(\cdot, u, \nabla_g u, \dots, \nabla_g^{2m-1}u) && \text{near } p,\\
	&\nabla_g^j u(p) = \zeta_j, && j=0,\dots,2m-1.
\end{aligned}
\right.
$$
The local existence theory for such systems in the Euclidean setting was established by the first and third authors in \cite{PZ}. More precisely, for every vector-valued function $f \in C^{1,\alpha}$ (where $0 < \alpha < 1$), there exist infinitely many local $C^{2m,\alpha}$ solutions to the above Poisson system with arbitrarily prescribed admissible initial jets. On general Riemannian manifolds, the case $m=1$ was treated by the first and second authors in \cite{PY}. With suitable modifications, their method extends to the case $m>1$. As an immediate consequence of Theorem \ref{thm:manifold_tri}, we obtain Theorem \ref{thm:application_manifold_2m-1} regarding the local uniqueness property for such systems under an additional radial symmetry assumption.

\begin{proof}[\textbf{Proof of Theorem  \ref{thm:application_manifold_2m-1}}: ]
	 Let $w := u - v$. Since $u$ and $v$ both solve \eqref{eqn:Delta^m_IVP_Riem}, we have 
 $$\Delta_g^m w = f(\cdot, u, \nabla_g u, \dots, \nabla_g^{2m-1}u) - f(\cdot, v, \nabla_g v, \dots, \nabla_g^{2m-1}v).$$
Because $f$ is locally Lipschitz and $u,v \in C^{2m}$, it follows that for every $\tilde U_p \Subset U_p$, there exists a constant $C>0$ such that
\[
|\Delta_g^m w | \le C \sum_{j=0}^{2m-1} |\nabla_g^j w |
\quad \text{in }\  \tilde U_p.
\]
Moreover, by assumption  $w$ is geodesically radially symmetric in $U_p$ about $p$, and  all even-order radial derivatives of $w$ up to order $2m-2$  vanish at $p$. It then follows from Theorem \ref{thm:manifold_tri} that $w \equiv 0$ in $\tilde U_p$. That is, $u \equiv v$ in $\tilde U_p$.  Since $\tilde U_p$ is arbitrary, $u \equiv v$ in $U_p$.

\end{proof}

\vspace{0.2cm}

In the following example, we construct a pair of smooth nonradial functions $u$ and $v$ such that  the difference $u-v$ is   geodesically radially symmetric and the jets of $u$ and $v$ at the center agree to arbitrary order. In particular, according to Theorem  \ref{thm:application_manifold_2m-1}, there is no equation of the form \eqref{eqn:Delta^m_IVP_Riem} that is satisfied simultaneously by these two functions.

\begin{example}
    \upshape
Consider the Poincar\'e metric on the unit disc $\mathbb D$ in $\mathbb C$:
$$ ds^2=\frac{4\left(dx^2+dy^2\right)}{\left(1-x^2-y^2\right)^2} $$
According to classical complex analysis theory, the hyperbolic distance from $z=x+iy\in \mathbb D$ to any point $a\in \mathbb D$ is given by 
\[
d_{\mathbb D}(z,a)
=
2\tanh^{-1}
\left|
\frac{z-a}{1-\overline{a}z}
\right|.
\]
Let $\phi\in C^\infty([0, 1))$ be a function that is flat at $0$. For instance, take $\phi(r)= e^{-\frac{1}{|r|^2}}$ on $(0, 1)$, and extend it continuously to $r=0$. Define 
$$ u(z) = x  +\phi\left(\left|\frac{z-a}{1-\overline{a}z} \right| \right)$$
and 
  $$v(z) = x.$$
Then neither $u$ nor $v$ is locally geodesically radially symmetric about $a$. However, their difference
$$u(z)-v(z) = \phi\circ \tanh\left(\frac{1}{2}d_{\mathbb D}(z,a)\right)  $$ is geodesically radially symmetric in $\mathbb D$ about $a$. Moreover, since \(\phi\) is flat at \(0\), the function \(u-v\) vanishes to
infinite order at \(a\). Hence the jets of \(u\) and \(v\) agree to arbitrary
order at \(a\).

\end{example}

\vspace{.2in}

 \section{Proof of Theorem \ref{thm:local-kahler-form-rigidity}}\label{cas}

Let \((M,\omega)\) be a K\"ahler manifold of complex dimension $n$. In this section  we study the local aspects   of the Calabi--Yau equation on $M$. We begin by analyzing a    local rigidity phenomenon    under a radial symmetry assumption. We then   establish the  existence of abundant  local potentials   by prescribing jets up to order $2$ at a fixed point $p\in M$. Altogether, these results yield    Theorem \ref{thm:local-kahler-form-rigidity}. Throughout the section, we choose local holomorphic coordinates \(z=(z^1,\ldots,z^n)\) near \(p\), and write
\[
\omega=\sqrt{-1}\,g_{i\bar j}\,dz^i\wedge d\bar z^j.
\]
Here and in what follows we use the Einstein summation convention, so repeated indices are summed over $\{1, \ldots, n\}$.
 
  The proof of the rigidity part  of Theorem \ref{thm:local-kahler-form-rigidity} is based on linearizing  equation \eqref{calabi} along the segment joining the two potential functions. As will be shown, the resulting equation for their difference satisfies a linear elliptic equation. Under the assumption that  the difference of the potential functions is locally geodesically radial about \(p\), the equation reduces,  with the aid of the following lemma, to a singular radial ODE of the type covered by Theorem~\ref{gron}.

\begin{lemma}\label{lem:distance-expansion}
Let \((M,\omega)\) be a K\"ahler manifold, and   \(g\) be the Riemannian metric induced by \(\omega\). Fix \(p\in M\), and let  $r =d_g(\cdot, p) $ be the geodesic distance from $p$. Choose holomorphic normal coordinates \(z=(z^1,\ldots,z^n)\) centered at \(p\), and write $ \xi=\frac{z}{|z|}$. Then, away from \(p\) and for \(z\) sufficiently small, we have
\begin{equation}\label{ri}
    r_i=\frac{\bar\xi^i}{2}+O(r),
\end{equation}
and
\begin{equation}\label{rij}
    r_{i\bar j}
=
\frac{1}{2r}\delta_{ij}
-
\frac{1}{4r}\bar\xi^i\xi^j
+
O(1).
\end{equation}
Here \(r_i=\partial_i r\), \(r_{i\bar j}=\partial_i\partial_{\bar j}r\), and \(O(1)\) denotes a term uniformly bounded as \(r\to 0\).
\end{lemma}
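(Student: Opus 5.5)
The plan is to compare $r$ with the Euclidean radius $\rho:=|z|$ in holomorphic normal coordinates and read off \eqref{ri} and \eqref{rij} from the model $\rho=(z\cdot\bar z)^{1/2}$. In holomorphic normal coordinates at $p$ we have $g_{i\bar j}=\delta_{ij}+O(|z|^2)$ and $dg_{i\bar j}(p)=0$. With the convention $\omega=\sqrt{-1}g_{i\bar j}dz^i\wedge d\bar z^j$, the induced Riemannian metric is $g=2\,\mathrm{Re}(g_{i\bar j}dz^i d\bar z^j)$ up to the normalization implicit in the statement. The leading coefficients $\tfrac12$ and $\tfrac14$ indicate that the relevant Euclidean radius is $\rho=|z|$ with $r=\rho+O(\rho^3)$. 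I first fix this normalization by checking it against the flat model: if $r=\rho$ exactly, then $\partial_i\rho=\bar z^i/(2\rho)=\bar\xi^i/2$ and
\[
\partial_i\partial_{\bar j}\rho=\frac{\delta_{ij}}{2\rho}-\frac{\bar z^i z^j}{4\rho^3}=\frac{1}{2\rho}\delta_{ij}-\frac{1}{4\rho}\bar\xi^i\xi^j .
\]
These are exactly the claimed leading terms. So the lemma amounts to showing that the curvature corrections contribute $O(r)$ to the first derivatives and $O(1)$ to the mixed second derivatives.

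The key step is a precise expansion of the distance function. I will use $r^2=d_g(p,\cdot)^2$, which is smooth near $p$. In normal-type coordinates its Taylor expansion is $r^2=\rho^2+Q(z)$, where $Q$ is smooth and $Q=O(\rho^4)$ with $\partial Q=O(\rho^3)$ and $\partial\bar\partial Q=O(\rho^2)$. To justify this I will compare holomorphic normal coordinates with Riemannian normal coordinates $x$. The two agree to second order, $x=z+O(|z|^3)$, because $dg(p)=0$ in both systems. Since $r^2=|x|^2$ exactly in Riemannian normal coordinates, the claim follows from the smoothness of the coordinate change. Establishing that the coordinate change is cubic, and hence that $r^2-\rho^2$ vanishes to fourth order, is the main obstacle. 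If doing this cleanly is awkward, a fallback is the weaker bound $Q=O(\rho^3)$ with $\partial Q=O(\rho^2)$ and $\partial\bar\partial Q=O(\rho)$, which suffices, as the next paragraph shows.

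Writing $r=(\rho^2+Q)^{1/2}$, I then differentiate. From
\[
r_i=\frac{\partial_i(\rho^2)+\partial_iQ}{2r}=\frac{\bar z^i+O(\rho^2)}{2r}
\]
and $r=\rho(1+O(\rho))$, I get $r_i=\bar\xi^i/2+O(r)$. For the mixed derivative I use
\[
r_{i\bar j}=\frac{\partial_i\partial_{\bar j}(r^2)}{2r}-\frac{r_i\,r_{\bar j}}{r}.
\]
The first term equals $(\delta_{ij}+O(\rho))/(2r)$. In the second, $r_ir_{\bar j}=\bar\xi^i\xi^j/4+O(r)$. Dividing by $r$ gives $r_{i\bar j}=\frac{1}{2r}\delta_{ij}-\frac{1}{4r}\bar\xi^i\xi^j+O(1)$. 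Finally, the quantity $\xi=z/|z|$ appearing in the statement is defined with $\rho$, while the expansion is organized in terms of $r$; since $\rho/r=1+O(r)$, replacing $1/\rho$ by $1/r$ changes the expression only by $O(1)$. This completes the plan.
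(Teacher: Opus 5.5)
Your argument is correct, but it takes a different route from the paper. The paper gets \eqref{ri} by differentiating $r=|z|+O(|z|^2)$ directly. It gets \eqref{rij} intrinsically: in geodesic polar coordinates it uses $\nabla^2 r(\partial_r,\cdot)=0$ and $\nabla^2 r(\partial_{\theta^a},\partial_{\theta^b})=\frac12\partial_r(r^2\gamma_{ab})$ to obtain $\nabla^2 r=\frac1r(g-dr\otimes dr)+O(1)$. It then takes the $(1,\bar 1)$-component, which equals $r_{i\bar j}$ because the mixed Christoffel symbols of a K\"ahler metric vanish, and inserts \eqref{ri}.

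You instead work with the smooth function $r^2=|x|^2$ in Riemannian normal coordinates and the identity $r_{i\bar j}=\partial_i\partial_{\bar j}(r^2)/(2r)-r_ir_{\bar j}/r$. Everything then reduces to Taylor bounds on $Q=r^2-|z|^2$. Your route gains rigor and economy. Smoothness of $r^2$ is exactly what justifies differentiating the error term in $r=|z|+O(|z|^2)$, which the paper takes for granted. You also need neither the Hessian expansion of the distance function nor the identification of the complex Hessian with part of the Riemannian one. The paper's route is more geometric: it exhibits the $\frac1r$ leading term as the second fundamental form of small geodesic spheres, and it reuses the polar-coordinate setup of Section~\ref{sec8}.

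What you call the main obstacle can be bypassed entirely. Your fallback needs only $x=z+O(|z|^2)$. This holds once the Riemannian normal coordinates are built on the frame $\{\partial_{x^k},\partial_{y^k}\}$ at $p$, which is orthonormal in the normalization the statement forces. Then $Q$ vanishes to third order at $p$, and $\partial Q=O(\rho^2)$, $\partial\bar\partial Q=O(\rho)$ follow at once. The cubic agreement is true, since both coordinate systems have vanishing Christoffel symbols at $p$, but it is not needed.
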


\begin{proof}
In holomorphic normal coordinates, \(r(z)=|z|+O(|z|^2)\). Therefore
\[
r_i=\partial_i r=\frac{\bar \xi^i}{2}+O(r),
\]
which proves \eqref{ri}.

It remains to prove \eqref{rij}. In geodesic polar coordinates \((r,\theta)\) for the Riemannian metric $g$ induced by \(\omega\), the metric has the form
\[
g=dr^2+r^2\gamma_{ab}(r,\theta)\,d\theta^a d\theta^b,
\]
where \(\gamma_{ab}(r,\theta)\) is smooth and uniformly positive for \(r\) small, and \(\gamma_{ab}(0,\theta)\) is the standard metric on \(S^{2n-1}\). 
Hence
\[
\nabla^2 r(\partial_r,\cdot)=0,
\qquad
\nabla^2 r(\partial_{\theta^a},\partial_{\theta^b})
=\frac12 \partial_r (r^2\gamma_{ab}) =
 r  \gamma_{ab}(0, \theta)  + O(r^2).
\]
See, for instance, \cite[pp. 49]{Petersen}. This gives the intrinsic expansion
\begin{equation}\label{exp1}
    \nabla^2 r
=
\frac1r\bigl(g-dr\otimes dr\bigr)+O(1).
\end{equation}
 Taking the \((1,\bar 1)\)-component of \eqref{exp1} and making use of  \eqref{ri} further give
\[
r_{i\bar j}
=
\frac{1}{2r}\delta_{ij}
-
\frac{1}{r}r_i r_{\bar j}
+
O(1)  = \frac{1}{2r}\delta_{ij}
-
\frac{1}{4r}\bar\xi^i\xi^j
+
O(1).
\]
This proves the lemma.

\end{proof}

\medskip

\begin{proof}[\textbf{Proof of  Theorem \ref{thm:local-kahler-form-rigidity} part 2:}]  
Define
\[
F(\varphi):=\log\frac{\omega_\varphi^n}{\omega^n}, \ \  \text{where}\ \  
\omega_\varphi:=\omega+\sqrt{-1}\,\partial\bar\partial\varphi.
\]
Then equation \eqref{calabi} is equivalent to \[F(\varphi)=f.\] Let \(\varphi_0\) and \(\varphi_1\) be two solutions and set \(u:=\varphi_1-\varphi_0\). Letting
\[\varphi_t:=\varphi_0+tu, \qquad 0\leq t\leq 1,\]  the fundamental theorem of calculus gives
\begin{equation}\label{fun}
0=F(\varphi_1)-F(\varphi_0)
=\int_0^1 D_{\varphi_t}F(u)\,dt .
\end{equation}
The linearization  $  D_{\varphi_t}F $ is computed as follows. Since $$\omega_{\varphi_t+su}
=\omega_{\varphi_t}+s\sqrt{-1}\,\partial\bar\partial u,$$ we have
\begin{equation*}
 D_{\varphi_t}F (u)
=
\left.\frac{d}{ds}\right|_{s=0}
\log\frac{
(\omega_{\varphi_t}+s\sqrt{-1}\,\partial\bar\partial u)^n
}{\omega^n}.
\end{equation*}
Let \(g_t\) be the K\"ahler metric associated to \(\omega_{\varphi_t}\).  In local holomorphic coordinates, $ g_{t,i\bar j}=g_{i\bar j}+(\varphi_t)_{i\bar j}$. Hence  
\begin{equation*} 
D_{\varphi_t} F (u)
 = \left.\frac{d}{ds}\right|_{s=0}
\log\det
(g_{t, i\bar j}+s\,u_{i\bar j})  = g_t^{i\bar j}u_{i\bar j}
\left(=
\Delta_{\omega_{\varphi_t}}u\right),
\end{equation*}
where \(g_t^{i\bar j}\) denotes the inverse matrix of \(g_{t,i\bar j}\) and the last step  has used the matrix identity $\left.\frac{d}{ds}\right|_{s=0}
\log\det(A+sB) = \operatorname{tr}(A^{-1}B)$ for two matrices $A$ and $B$ with $A$ invertible. Combining this with \eqref{fun} gives
\[
0=\left(\int_0^1 g_t^{i\bar j}\,dt\right)u_{i\bar j}.
\]
Set \(a^{i\bar j}:=\int_0^1 g_t^{i\bar j}\,dt\). Since each \(g_t^{i\bar j}\) is positive definite, \(a^{i\bar j}\) is positive definite, and \(u\) satisfies
\begin{equation}\label{linear-eq}
a^{i\bar j}u_{i\bar j}=0.
\end{equation}

Assume now that \(u\) is geodesically radial with respect to the induced Riemannian metric   \(g\) in a normal neighborhood $U_p$ of \(p\), so that \(u(z)=\phi(r(z))\) for some single variable function $\phi$, where \(r(z)=d_g(z, p)\). All derivatives \(u_{i\bar j}\) are taken with respect to local holomorphic coordinates.  By the chain rule in holomorphic coordinates,  
\[
u_i=\phi'(r)r_i  \qquad \text{and}\qquad u_{i\bar j}
=
\phi''(r)r_i r_{\bar j}
+
\phi'(r)r_{i\bar j}.
\]
Substituting this into \eqref{linear-eq}, we obtain
\begin{equation}\label{en1}
a^{i\bar j}r_i r_{\bar j}\,\phi''(r)
+
a^{i\bar j}r_{i\bar j}\,\phi'(r)
=
0.
\end{equation}
Since \(r\) is the distance function with respect to \(g\), \(|\nabla r|_g=1\)  for \(r\in (0, r_\theta)\), where $r_\theta$ is the radial bound in the direction $\theta$. By the uniform ellipticity of \(a^{i\bar j}\), we have \(0<c\leq a^{i\bar j}r_i r_{\bar j}\leq C\). Therefore, after dividing \eqref{en1} by \(a^{i\bar j}r_i r_{\bar j}\), we get
\begin{equation}\label{en3}
\phi''(r)+\widetilde a(r,\theta)\phi'(r)=0,
 \ \  \text{where}\ \  
\widetilde a(r,\theta):=
\frac{a^{i\bar j}r_{i\bar j}}
{a^{i\bar j}r_i r_{\bar j}}.
\end{equation}
Here \(\widetilde a\) is smooth on $0< r<r_\theta,\theta\in S^{2n-1}$.

We next determine the leading behavior of \(\widetilde a\) as \(r\to 0\). 
Indeed, it follows from \eqref{ri} and \eqref{rij} that \[
a^{i\bar j}r_i r_{\bar j}
=
\frac14 a^{i\bar j}(p)\bar\xi^i\xi^j
+
O(r),
\]
and
\[
a^{i\bar j}r_{i\bar j}
=
\frac{1}{2r}\sum_i a^{i\bar i}(p)
-
\frac{1}{4r}a^{i\bar j}(p)\bar\xi^i\xi^j
+
O(1) = \frac{1}{r}\left(\frac{1}{2}\sum_i a^{i\bar i}(p)
-
\frac{1}{4}a^{i\bar j}(p)\bar\xi^i\xi^j
+
O(r)\right).
\]
Consequently,
\begin{equation}\label{atil-leading}
\widetilde a(r,\theta)
=
\frac1r
\left(
\frac{2\sum_i a^{i\bar i}(p)}
{a^{i\bar j}(p)\bar\xi^i\xi^j}
-1+
O(r)
\right)
.
\end{equation}
Since \(a^{i\bar j}(p)\) is positive definite, all the eigenvalues $\lambda_i$ of $(a^{i\bar j}(p))$ are positive. We have
\[
\frac{\sum_i a^{i\bar i}(p)}
{a^{i\bar j}(p)\bar\xi^i\xi^j} \ge \frac{\sum_i \lambda_i}{\max_{i} \lambda_i}
\geq 1.
\]
Hence 
\begin{equation}\label{ct}
c_\theta:=
\frac{2\sum_i a^{i\bar i}(p)}
{a^{i\bar j}(p)\bar\xi^i\xi^j}
-1\ge 1.
\end{equation}
Combined with \eqref{atil-leading}, we have for each fixed \(\theta\in S^{2n-1}\), there exists a continuous function \(h_\theta\) on \([0,r_\theta)\), with \(h_\theta(0)=0\)  such that
\[
\widetilde a(r,\theta)=\frac{c_\theta-h_\theta(r)}{r}.
\]
Therefore \eqref{en3} can be rewritten as
\begin{equation}\label{radial-ode}
\phi''(r)+\frac{c_\theta}{r}\phi'(r)
=
h_\theta(r)\frac{\phi'(r)}{r}.
\end{equation}
Finally, recalling the operator \(\mathcal D_\alpha:=\frac{d}{dr}+\frac{\alpha}{r}\), we have $\mathcal D_{c_\theta}\mathcal D_0\phi
=
\phi''(r)+\frac{c_\theta}{r}\phi'(r). $
Thus \eqref{radial-ode} becomes
\[
\mathcal D_{c_\theta}\mathcal D_0\phi
=
h_\theta(r)\frac{\phi'(r)}{r}.
\]
Since \(c_\theta>0\) by \eqref{ct}, condition \eqref{bjN} is satisfied. Therefore Theorem~\ref{gron} applies to give $\phi\equiv 0$.

\end{proof}

Next we investigate the existence of ample local K\"ahler forms in the class of $\omega$ with the
prescribed Ricci form in a neighborhood of $p$. 
Let $\text{Herm}(n)$ be the collection of  Hermitian matrices of size $n$. \(H=(H_{i\bar j})\in \text{Herm}(n)\) is called an admissible Hessian for $\omega$ at \(p\) if
 \begin{equation*} 
   \det(g_{i\bar j}(p)+H_{i\bar j})
=
e^{f(p)}\det(g_{i\bar j}(p))\ \ \text{and}\ \   g_{i\bar j}(p)+H_{i\bar j}>0.
 \end{equation*}
Equivalently, \(H\) is admissible if the Hermitian form
\[
\omega_H(p)
:=
\sqrt{-1}\bigl(g_{i\bar j}(p)+H_{i\bar j}\bigr)
\,dz^i\wedge d\bar z^j
\]
is positive and satisfies the  Calabi constraint at $p$: 
\[
\omega_H(p)^n=e^{f(p)}\omega(p)^n.
\]
Define the set $$\Sigma_p:=\{H\in \text{Herm}(n): H   \text{ is admissible at } p\}.$$ Then $\Sigma_p$ is a hypersurface in $\text{Herm}(n) $ and  $\text{dim}_{\mathbb R}\Sigma_p = n^2-1$. Theorem \ref{thm:local-kahler-form-rigidity}  part $1$ is a direct consequence of the following

\begin{thm}\label{cale} Let \((M,\omega)\) be a smooth Kähler manifold,   \(p\in M\), and   \(f\in C^\infty\) in a neighborhood of \(p\). Let \(H=(H_{i\bar j})\) be an admissible Hessian for \(\omega\) at \(p\). Then, after possibly shrinking to a sufficiently small neighborhood of \(p\), there exists a smooth local solution \(\varphi\) of
\[
(\omega+\sqrt{-1}\,\partial\bar\partial\varphi)^n
=
e^f\omega^n
\]
such that
\[
\varphi_{i\bar j}(p)=H_{i\bar j}.
\]
Moreover, the value \(\varphi(p)\) and the differential \(d\varphi(p)\) may be prescribed arbitrarily.
\end{thm}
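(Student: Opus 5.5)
Our plan is to construct $\varphi$ as a quadratic polynomial in holomorphic coordinates with the prescribed Hessian, plus a higher-order correction obtained from local solvability of a uniformly elliptic fully nonlinear equation with small data. Fix holomorphic coordinates $z$ centered at $p$ and set
\[
\varphi_0(z):=c+2\,\mathrm{Re}\big(\ell_i z^i\big)+H_{i\bar j}z^i\bar z^j ,
\]
where the constant $c$ and the linear coefficients $\ell_i$ realize the prescribed value $\varphi(p)$ and differential $d\varphi(p)$. Then $(\varphi_0)_{i\bar j}\equiv H_{i\bar j}$. Admissibility of $H$ gives, at $p$,
\[
\det\big(g_{i\bar j}+(\varphi_0)_{i\bar j}\big)=e^{f}\det g_{i\bar j}.
\]
By continuity, $\omega+\sqrt{-1}\partial\bar\partial\varphi_0>0$ near $p$, and the error
\[
E(z):=\log\frac{\det(g+\varphi_{0,i\bar j})}{\det g}-f
\]
is smooth with $E(p)=0$, so $|E|\le C|z|$.

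Next we seek $\varphi=\varphi_0+\psi$, where $\psi=O(|z|^3)$ so that the $2$-jet is unchanged. This requires solving
\[
G(\psi):=\log\det\big(\tilde g_{i\bar j}+\psi_{i\bar j}\big)-\log\det \tilde g_{i\bar j}=-E,\qquad \tilde g:=g+\varphi_{0,i\bar j}.
\]
We rescale by $z=\epsilon\zeta$ on the unit ball $B_1$. Writing $\psi(z)=\epsilon^2\Psi(\zeta)$, the equation becomes
\[
\log\det\big(\tilde g(\epsilon\zeta)+\Psi_{\zeta\bar\zeta}\big)-\log\det\tilde g(\epsilon\zeta)=-E(\epsilon\zeta)=:-E_\epsilon(\zeta),
\]
with $\|E_\epsilon\|_{C^{k,\alpha}(B_1)}=O(\epsilon)$. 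The linearization at $\Psi=0$ is $L_\epsilon=\tilde g^{i\bar j}(\epsilon\zeta)\partial_i\bar\partial_j$, which converges to a constant-coefficient elliptic operator as $\epsilon\to 0$.

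To invert it, we impose the Dirichlet condition $\Psi=0$ on $\partial B_1$ and apply the implicit function theorem in $C^{k+2,\alpha}_0(\overline{B_1})\to C^{k,\alpha}(\overline{B_1})$. The Schauder theory gives invertibility of $L_\epsilon$ uniformly in small $\epsilon$, so for small $\epsilon$ we obtain a solution $\Psi_\epsilon$ with $\|\Psi_\epsilon\|_{C^{k+2,\alpha}}\le C\epsilon$. Smoothness then follows by elliptic regularity, e.g.\ Evans--Krylov followed by bootstrapping, or directly because the equation is smooth and elliptic and the solution is $C^{2,\alpha}$.

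\textbf{Main obstacle.} The Dirichlet solution $\Psi$ need not vanish to third order at $p$, so the jet conditions may fail. We fix this by a finite-dimensional correction. We add to $\varphi_0$ a pluriharmonic-free correction $q(z)=c'+2\,\mathrm{Re}(\ell'_i z^i)+H'_{i\bar j}z^i\bar z^j$, with $H'$ tangent to $\Sigma_p$ at $H$, and choose the parameters so that the final $\varphi$ has value, differential, and complex Hessian at $p$ equal to the prescribed ones. The value and differential can simply be corrected afterwards by subtracting $\psi(p)+2\,\mathrm{Re}(\partial_i\psi(p)z^i)$, which is pluriharmonic and leaves the equation unchanged.

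For the Hessian, the equation itself forces $\varphi_{i\bar j}(p)\in\Sigma_p$. We parametrize $\Sigma_p$ near $H$ by $\dim_{\mathbb R}=n^2-1$ parameters $t$, and consider the map $t\mapsto \varphi^{t}_{i\bar j}(p)\in\Sigma_p$. Its deviation from the identity is $\psi^t_{i\bar j}(p)=\Psi^t_{\zeta\bar\zeta}(0)=O(\epsilon)$, uniformly in $C^1$ in $t$ by the smooth dependence provided by the implicit function theorem. A degree argument, or Brouwer's fixed point theorem, then yields $t$ with $\varphi^t_{i\bar j}(p)=H$. This completes the plan.
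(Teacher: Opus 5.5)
Your proposal is correct and follows essentially the same route as the paper: a quadratic ansatz $K_{i\bar j}z^i\bar z^j$ with $K\in\Sigma_p$, plus a Dirichlet correction on a small ball made small by scaling. Your rescaling to $B_1$ with the implicit function theorem replaces the paper's scaled H\"older norms and contraction mapping. You then adjust $K$ along $\Sigma_p$ so that the complex Hessian at $p$ equals $H$. Brouwer replaces the paper's contraction there and needs only the $C^0$ bound $O(\epsilon)$ and continuity in $t$; these follow because $\Psi^{0,t}\equiv 0$. Finally, pluriharmonic terms fix $\varphi(p)$ and $d\varphi(p)$.

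The only slip is the phrase about adding $H'$ tangent to $\Sigma_p$. The base Hessian must lie on $\Sigma_p$ itself, as in your following paragraph; otherwise $E(p)\neq 0$ and the rescaled data $E_\epsilon$ are not small.
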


\medskip

The argument relies  on  constructing a contraction map in a closed subset of  H\"older spaces. The resulting fixed point then yields a local solution of  the fully nonlinear Calabi equation. To ensure that the potential attains the prescribed second-order jet \(H\), we introduce a finite-dimensional family of quadratic functions realizing these jets as approximate solutions, and subsequently adjust the parameters via a finite-dimensional inverse function theorem.

For \( 0 < \rho <1 \), $0<\alpha<1$ and $k\in \mathbb Z^+\cup\{0\}$, we will define the following  scaled H\"older norm on \(B_\rho\), the coordinate ball of radius $\rho$ with respect to the local coordinates $(z_1, \ldots, z_n)$, by
\[
\|u\|_{C^{k,\alpha}_\rho }
:=
\sum_{j=0}^k \rho^j\|\nabla^j u\|_{C }
+
\rho^{k+\alpha}[\nabla^k u]_{C^\alpha },
\]
where the semi-norm $ [v]_{C^\alpha}$ of a   function $v\in C(B_\rho)$ is defined by \[
[v]_{C^\alpha(B_\rho)}
:=
\sup_{\substack{x,y\in B_\rho,\\ x\neq y}}
\frac{|v(x)-v(y)|}{|x-y|^\alpha} 
\]
if it is finite. Throughout the proof,  $C$ denotes  a constant dependent only possibly  on the given data $\omega$, $ f$ and $H$, which may vary from line to line.   


 \begin{proof}[\textbf{Proof of Theorem \ref{cale}: }]
Without loss of generality, let \(p=0\).  For each $M \in \text{Herm}(n)$ satisfying $ g_{i\bar j}(0)+M_{i\bar j}>0$, define
\begin{equation*} 
\mathcal F(z,M)
:=
\log\det(g_{i\bar j}(z)+M_{i\bar j}) - \log\det(g_{i\bar j}(z))- f(z).
\end{equation*}
Then the Calabi equation \eqref{calabi} reduces to \(\mathcal F(\cdot,\partial\bar\partial\varphi)=0\).
For every \(K\in\Sigma (:=\Sigma_0)\), set
\[
Q_K(z):=K_{i\bar j}z^i\bar z^j.
\]
Then \((Q_K)_{i\bar j}=K_{i\bar j}\). We seek a solution of the form
\[
\varphi_K=Q_K+w_K,
\]
where $w_K$ is a smooth function on $B_\rho$ with $ w_K|_{\partial B_\rho}=0$. Thus \(w:= w_K\) should solve
\begin{equation}\label{eq:wK-equation}
\mathcal F(z,K+\partial\bar\partial w)=0
\qquad\text{in }B_\rho.
\end{equation}

Rewrite $ \mathcal F(z,K+\partial\bar\partial w) $ as the sum of the following three components: 
\begin{equation*}
    \mathcal F(z,K+\partial\bar\partial w) = \mathcal F(z,K) +L_K(w) + N_K(w),
\end{equation*}
where $L_K$ is the linearization of $\mathcal F$ in the matrix variable at $K$, and $N_K$ is the quadratic remainder after subtracting the linear part of
\(\mathcal F\) in the matrix variable. Namely, 
\begin{equation}\label{aij}
    L_Kw:=A_K^{i\bar j}(z)w_{i\bar j}, \ \ \text{with}\ \ A_K^{i\bar j}(z)
:=
D_{M_{i\bar j}} \mathcal F(z,K)
=
(g(z)+K)^{i\bar j},
\end{equation}  
and 
\[
N_K(w)
:=
\mathcal F(z,K+\partial\bar\partial w)
-
\mathcal F(z,K)
-
A_K^{i\bar j}(z)w_{i\bar j}.
\]
Thus,  \eqref{eq:wK-equation} can  be rewritten as
\begin{equation}\label{eq:fixed-point-equation}
L_Kw=- \mathcal F(\cdot,K)-N_K(w).
\end{equation}
Note that since \(g(0)+K>0\), after shrinking \(\rho\) if necessary, the operator 
$L_K $ is uniformly elliptic on \(B_\rho\) and  for \(K\) in a small neighborhood of \(H\) inside \(\Sigma\). 

We next estimate the scaled H\"older norms of $\mathcal F(\cdot,K) $ and $ N_K(w)$.  The admissibility condition \(K\in\Sigma\) ensures that  \(\mathcal F(0,K)=0\). By the smoothness of  the data,
\begin{equation}\label{eq:RK-scaled}
\| \mathcal F(\cdot,K)\|_{C^\alpha_\rho }
\leq
C\rho^\alpha
\end{equation}
for \(K\) near \(H\).
For $
N_K(w)$, Taylor's formula in the matrix variable gives
\[
N_K(w)
=
\int_0^1(1-s)\,
D_{M_{i\bar j}M_{k\bar \ell}}\mathcal F
\bigl(z,K+s\partial\bar\partial w\bigr)
w_{i\bar j}w_{k\bar \ell}\,ds .
\]
Since the matrices involved remain in a fixed compact subset of the admissible cone, the second derivatives $D_{M_{i\bar j}M_{k\bar \ell}}\mathcal F $ are uniformly bounded. Hence by the scaled H\"older product estimate, 
\begin{equation}\label{eq:NK-quadratic}
\|N_K(w)\|_{C^\alpha_\rho}
\leq
C\|\partial\bar\partial w\|_{C^\alpha_\rho}^2 \le  
C\rho^{-4}\|w\|_{C^{2,\alpha}_\rho}^2,
\end{equation}
where the last inequality made use of the fact  \(
\|\partial\bar\partial w\|_{C^\alpha_\rho}
\leq
C\rho^{-2}\|w\|_{C^{2,\alpha}_\rho} 
\)     by  definition.

 Let \(T_K\) denote the inverse operator of the linear Dirichlet problem
\[
L_Kw=h,
\qquad
w|_{\partial B_\rho}=0 
\]
for $h\in  C^\alpha(B_\rho)$. By the scaled Schauder estimate (see \cite[Chapter 6]{GT}), $T_Kh\in  C^{2, \alpha}(B_\rho)$  with
\begin{equation}\label{eq:scaled-schauder}
\|T_Kh\|_{C^{2,\alpha}_\rho}
\leq
C\rho^2\|h\|_{C^\alpha_\rho},
\end{equation}
where \(C\) is independent of sufficiently small \(\rho\) and of \(K\) near \(H\).
Define
\[
\mathcal T_K(w):=T_K\bigl(- \mathcal F(\cdot,K)-N_K(w)\bigr).
\]
We shall show that, for \(\rho>0\) sufficiently small and some constant  $M_0$ sufficiently large, \(\mathcal T_K\) maps the set
\[
\mathcal B_\rho
:=
\left\{
w\in C^{2, \alpha}(B_\rho): w|_{\partial B_\rho}=0\ \ \text{and}\ \ 
\|w\|_{C^{2,\alpha}_\rho}\leq M_0\rho^{2+\alpha}
\right\}
\]
into itself and is a contraction on \(\mathcal B_\rho\). 

Indeed,  for \(w\in\mathcal B_\rho\), by \eqref{eq:RK-scaled}-\eqref{eq:scaled-schauder},
\[
\begin{aligned}
\|\mathcal T_K(w)\|_{C^{2,\alpha}_\rho}
&\leq
C\rho^2
\left(
\| \mathcal F(\cdot,K)  \|_{C^\alpha_\rho}
+
\|N_K(w)\|_{C^\alpha_\rho}
\right)  \\
&\leq
C\rho^2
\left(
\rho^\alpha
+
\rho^{-4}\|w\|_{C^{2,\alpha}_\rho}^2
\right)  \\
&\leq
C\rho^{2+\alpha}
+
C\rho^{-2}(M_0\rho^{2+\alpha})^2  \\
&=
C(1 + M_0^2\rho^\alpha)\rho^{2+\alpha}.
\end{aligned}
\]
Choosing \(M_0\) sufficiently large (for instance, $M_0\ge 2C$) and then taking \(\rho>0\) sufficiently small (say, $\rho\le M_0^{-\frac{2}{\alpha}}$), we get
\[
\|\mathcal T_K(w)\|_{C^{2,\alpha}_\rho}
\leq
 M_0\rho^{2+\alpha}.
\]
Thus \(\mathcal T_K\) maps \(\mathcal B_\rho\) into itself.

Next, for \(w_1,w_2\in\mathcal B_\rho\),    set   \(w_t:=w_2+t(w_1-w_2)\). Then by  the fundamental theorem of calculus and \eqref{aij},
\[
\begin{aligned}
N_K(w_1)-N_K(w_2)
&=
\mathcal F(z,K+\partial\bar\partial w_1)-\mathcal F(z,K+\partial\bar\partial w_2)
-
A_K^{i\bar j}(z) (w_1-w_2)_{i\bar j} \\
&=
\int_0^1
\left[
D_{M_{i\bar j}}\mathcal F(z,K+\partial\bar\partial w_t)
-
D_{M_{i\bar j}}\mathcal F(z,K)
\right]
  (w_1-w_2)_{i\bar j}\,dt .
\end{aligned}
\]
Since the matrices remain in a compact subset of the admissible cone,
\[
\left|D_{M_{i\bar j}}
\mathcal F(z,K+\partial\bar\partial w_t)
-
D_{M_{i\bar j}}\mathcal F (z,K)
\right|
\leq
C|\partial\bar\partial w_t|.
\]
Hence, using the scaled H\"older product estimate,
  \begin{equation*}\label{eq:NK-lip-ddbar}
\begin{split}
   \|N_K(w_1)-N_K(w_2)\|_{C^\alpha_\rho}
\leq&
C
\left(
\|\partial\bar\partial w_1\|_{C^\alpha_\rho}
+
\|\partial\bar\partial w_2\|_{C^\alpha_\rho}
\right)
\|\partial\bar\partial(w_1-w_2)\|_{C^\alpha_\rho}\\
\le & C\rho^{-4}
\left(
\|w_1\|_{C^{2,\alpha}_\rho}
+
\|w_2\|_{C^{2,\alpha}_\rho}
\right)
\|w_1-w_2\|_{C^{2,\alpha}_\rho}. 
\end{split}
\end{equation*}
 Together with \eqref{eq:scaled-schauder} and linearity of the operator $T_K$, we obtain
\[
\begin{aligned}
\|\mathcal T_K(w_1)-\mathcal T_K(w_2)\|_{C^{2,\alpha}_\rho}&= \|  T_K\left(N_k(w_1) -N_k(w_2)\right)\|_{C^{2,\alpha}_\rho} \\ 
&\leq
C\rho^2
\|N_K(w_1)-N_K(w_2)\|_{C^\alpha_\rho} \\
&\leq
C\rho^2\rho^{-4}
\left(
\|w_1\|_{C^{2,\alpha}_\rho}
+
\|w_2\|_{C^{2,\alpha}_\rho}
\right)
\|w_1-w_2\|_{C^{2,\alpha}_\rho} \\
&\leq
C\rho^{-2}(2M_0\rho^{2+\alpha})
\|w_1-w_2\|_{C^{2,\alpha}_\rho} \\
&=
CM_0\rho^\alpha
\|w_1-w_2\|_{C^{2,\alpha}_\rho}.
\end{aligned}
\]
Shrinking \(\rho>0\) sufficiently small again so that \(CM_0\rho^\alpha<1\), we conclude that \(\mathcal T_K\) is a contraction on \(\mathcal B_\rho\). By the contraction mapping principle, \(\mathcal T_K\) has a unique fixed point \(w_K\in\mathcal B_\rho\) satisfying \begin{equation}\label{eq:wK-bound}
\|w_K\|_{C^{2,\alpha}_\rho}
\leq
C\rho^{2+\alpha}.
\end{equation}  In particular, this $w_K$ satisfies \eqref{eq:fixed-point-equation}, which is equivalent to  \eqref{eq:wK-equation}. By elliptic bootstrapping, \(w_K\) is smooth on $B_\rho$. 
Thus \(\varphi_K:=Q_K+w_K\) is a smooth local solution to \eqref{calabi} for every \(K\in\Sigma\) close to \(H\). Its complex Hessian at \(0\) is
\[
(\varphi_K)_{i\bar j}(0)
=
K_{i\bar j}+(w_K)_{i\bar j}(0).
\]

 \vspace{0.2cm}
Next we adjust \(K\) so that the complex Hessian of $\varphi_K$ at the origin is exactly \(H\). Define an operator 
\begin{equation}\label{E}
\begin{aligned}
E_\rho:\Sigma &\to \Sigma \\
K &\mapsto (\varphi_K)_{i\bar j}(0)
= K+\partial\bar\partial w_K(0).
\end{aligned}
\end{equation}
The image indeed lies in \(\Sigma\), since \(\varphi_K\) solves the equation and therefore
 $
\mathcal F(0,E_\rho(K))=0.$
 Choose a local coordinate chart on \(\Sigma\) centered at \(H\), and identify \(H\) with \(0\). We use this chart to identify a sufficiently small neighborhood of \(H\) in \(\Sigma\) with a ball in \(T_H\Sigma\). Under this identification, we suppress the chart from the notation and write points of \(\Sigma\) simply as \(K\). In these coordinates, it suffices to find \(K_\rho\) such that $E_\rho(K_\rho)=0,$
or equivalently, by \eqref{E}, 
\[
K_\rho=-\partial\bar\partial w_{K_\rho}(0).
\]
Define
\[
R_\rho(K):=-\partial\bar\partial w_K(0).
\]
We will prove that \(R_\rho\) is a contraction on a fixed small ball $B_\delta\subset T_H\Sigma$ in these local coordinates. 
To this end, we first establish the necessary estimates. We  claim that the map \(\partial\bar\partial w_K(0)\) depends \(C^1\)-smoothly on \(K\). Moreover, for every \(\dot K\in T_K\Sigma\), if $$
\dot w_K:=D_Kw_K[\dot K], $$
then
\begin{equation}\label{ome}
\left|
\partial\bar\partial \dot w_K(0)
\right|
\leq
C\rho^\alpha |\dot K|.
\end{equation}
Assuming this claim, it follows from  \eqref{eq:wK-bound} and \eqref{ome} that 
\[
\|R_\rho\|_{C(B_\delta)}\leq C\rho^\alpha,
\qquad
\|D_KR_\rho\|_{C(B_\delta)}\leq C\rho^\alpha.
\]
Choosing \(\rho>0\) sufficiently small so that $ C\rho^\alpha<\min\left\{\frac{\delta}{2},\frac12\right\},$
we conclude that \(R_\rho\) maps \(B_\delta\) into itself and is a contraction. Hence, by the contraction mapping principle, there exists \(K_\rho\in B_\delta\) such that
\[
K_\rho=R_\rho(K_\rho).
\]
Therefore
\[
E_\rho(K_\rho)
=
K_\rho+\partial\bar\partial w_{K_\rho}(0)
=
K_\rho-R_\rho(K_\rho)
=
0.
\]
Recalling that $H$ is identified with $0$, this yields
$$E_\rho(K_\rho)=H.  $$ 
In particular, for \(\varphi:=\varphi_{K_\rho}\), we achieve the prescribed Hessian
\(
\varphi_{i\bar j}(0)=H_{i\bar j}.
\)

Now it remains to prove the claim. We first show that the dependence of \(w_K\) on \(K\) is smooth in the \(C^{2,\alpha}\)-topology. Indeed, let \(
\mathcal X_\rho
:=
\{w\in C^{2,\alpha}({B_\rho}): w|_{\partial B_\rho}=0\}
\), and  consider the map
 \begin{equation*}
    \begin{aligned}
        \mathcal P:\Sigma\times \mathcal X_\rho&\to C^\alpha(B_\rho) \\
 (K,w) &\mapsto \mathcal F(z,K+\partial\bar\partial w). 
    \end{aligned}
\end{equation*}
 At a solution \(w=w_K\), the derivative in the \(w\)-variable is
\[
D_w\mathcal P(K,w_K)[v]
=
G_K^{i\bar j}v_{i\bar j},
\]
where  
\begin{equation}\label{gij}
    G_{K,i\bar j}
:=
g_{i\bar j}+K_{i\bar j}+(w_K)_{i\bar j},
\end{equation}
and  \(G_K^{i\bar j}\) is the inverse matrix of \(G_{K,i\bar j}\).
Then
\( D_w\mathcal P(K,w_K)
\)
  is an isomorphism \(
\mathcal X_\rho\to C^\alpha(B_\rho)
\) 
by the elliptic Dirichlet theory. Hence the implicit function theorem applied to $ \mathcal P(K, w) =0$ gives the smooth dependence
$$
K\mapsto w_K
 $$ as a map into $\mathcal X_\rho.
$
Since the evaluation map
 $
w\mapsto \partial\bar\partial w(0)$
 is a bounded linear map from \(C^{2,\alpha}(B_\rho)\) to the finite-dimensional space of Hermitian matrices, it follows that
$$
K\mapsto \partial\bar\partial w_K(0)
$$
is smooth.

Differentiating \(\mathcal P(K, w_K)=0\) in the direction \(\dot K\in T_K\Sigma\), we obtain $
G_K^{i\bar j}\bigl(\dot K_{i\bar j}+(\dot w_K)_{i\bar j}\bigr)=0,$
or equivalently, $ \dot w_K$ satisfies the linear elliptic equation
\begin{equation}\label{eq:dotw-equation}
G_K^{i\bar j}(\dot w_K)_{i\bar j} = - G_K^{i\bar j}  \dot K_{i\bar j}.
\end{equation}
Since \(\dot K\in T_K\Sigma\), differentiating the constraint \(\mathcal F(0,K)=0\) gives
\[
(g(0)+K)^{i\bar j}\dot K_{i\bar j}=0.
\]
Using this  and \eqref{gij}, together with \(g(z)-g(0)=O(\rho)\) and \eqref{eq:wK-bound}, we get
\begin{equation*} 
\begin{split}
    \|G_K^{i\bar j}\dot K_{i\bar j}\|_{C^\alpha_\rho}
=&   \|  ((g+K+  \partial\bar\partial w_K)^{i\bar j}- \left(g(0)+K\right)^{i\bar j} )\dot K_{i\bar j}\|_{C^\alpha_\rho}  \\
\le &    \| \left(g(0)+K+  (g-g(0))+ \partial\bar\partial w_K\right)^{i\bar j}- \left(g(0)+K\right)^{i\bar j}) \|_{C^\alpha_\rho}|\dot K|   \\
\le& C    \|   g-g(0) + \partial\bar\partial w_K  \|_{C^\alpha_\rho} |\dot K|\\ 
\le &
C\rho^\alpha |\dot K|.
\end{split}
\end{equation*}
Here   the third line  has also used the matrix identity $ A^{-1}-B^{-1} = B^{-1}(B-A)A^{-1}$ for two invertible matrices $A$ and $B$.  In addition, since $w_K\in \mathcal X_\rho$, $ \dot w_K|_{\partial B_\rho}=0$ as well. Applying the scaled Schauder estimate \eqref{eq:scaled-schauder} to the equation \eqref{eq:dotw-equation}, we obtain
\begin{equation*}
\|\dot w_K\|_{C^{2,\alpha}_\rho}
\leq
C\rho^{2+\alpha}|\dot K|,
\end{equation*}
 from which and the fact that $\left|
\partial\bar\partial \dot w_K(0)
\right|\le \rho^{-2}  \|\dot w_K\|_{C^{2,\alpha}_\rho}$, the inequality \eqref{ome} follows.

Finally, the \(0\)-jet and \(1\)-jet of $\varphi$ may be prescribed independently. Indeed, adding a real constant and the real part of a   linear function does not change \(\partial\bar\partial\varphi\). Hence it does not change the Calabi equation or the prescribed complex Hessian, but it can adjust \(\varphi(p)\) and \(d\varphi(p)\) arbitrarily.

 \end{proof}

	\vspace{.2in}

\section{Weighted divergence-form operators}\label{last}
In  this section, we extend the preceding results for (powers of) the Laplacian   to a class of weighted divergence-form elliptic operators with an isolated singularity at the origin. For simplicity of exposition, we restrict our attention to functions defined on the ball $B_{r_0} \subset \mathbb R^n (n\ge 2) $ of radius $r_0$ centered at $0$. The results, however, remain valid on any star-shaped domain about $0$, that is, every other point in the domain can reach $0$ via a straight line segment lying entirely within the domain. 

\vspace{.05in}

  Given $(c_1, c_2) \in \mathbb{C}^2$, $u \in C^2(B_{r_0})$, and $r = |x|$, we define the operator $\Delta_{(c_1, c_2)}$ as
 $$\Delta_{(c_1, c_2)} u := r^{-c_2} \diver \left( r^{c_2-c_1} \nabla (r^{c_1} u) \right).$$
In the case when $(c_1, c_2)=(0, 0)$, it is reduced to the standard Laplacian $\Delta$.  
In general, expanding the above expression by  a direct calculation yields

\begin{equation*}
	\Delta_{(c_1, c_2)} u = \Delta u + \frac{c_1 + c_2}{r^2} (x \cdot \nabla u) + \frac{c_1(c_2 + n - 2)}{r^2} u.
\end{equation*}

\noindent
For a radial function $u(r)$, it reduces to
\begin{equation}
\label{eqn:Delta_c1_c2}
\Delta^{\text{rad}}_{(c_1, c_2)} u = u'' +\frac{c_1+c_2+n-1}{r}u' +\frac{c_1(c_2+n-2)}{r^2}u,
\end{equation}
where $\Delta^{\text{rad}}_{(c_1, c_2)}$ is the radial component of $\Delta_{(c_1, c_2)}$.
Operators of the type $\Delta_{(c_1, c_2)}$ are, in general, non-selfadjoint and lack translation or conformal invariance, except in certain specific cases (see Example \ref{ex4}).   Given $m \in \mathbb{Z}^+$ and $(c_1, \ldots, c_{2m}) \in \mathbb{C}^{2m}$, we similarly define the iterated operator
$$  \Delta_{(c_{2m-1}, c_{2m})}\cdots \Delta_{(c_1, c_2)}.$$

The following example demonstrates in detail that, when $m=2$,  the radial component of $\Delta_{(c_3,c_4)}\Delta_{(c_1,c_2)} $ is an Euler-type operator of order $4$.

\begin{example}
	\upshape
	Given $(c_1, c_2, c_3, c_4)\in \mathbb C^4$, a direct computation gives
\begin{equation*}
\Delta_{(c_3,c_4)}\Delta_{(c_1,c_2)}u
= \Delta^2 u +\frac{d_4}{r^{2}}\,(x\cdot\nabla)\,\Delta u +\frac{d_3}{r^{2}}\,\Delta u +\frac{d_2}{r^{4}}\,(x\cdot\nabla)^2u +\frac{d_1}{r^{4}}\,(x\cdot\nabla u) +\frac{d_0}{r^{4}}\,u,
\end{equation*}
where
\begin{align*}
        d_0& =  c_1 (c_3-2)\,(c_2+n-2)\,(c_4+n-4); \\
d_1 & =(c_1+c_2)(c_3-2)(c_4+n-4)
      +c_1(c_2+n-2)(c_3+c_4-4);\\
d_2 &  = (c_1+c_2)(c_3+c_4-4);\\
d_3 & = n(c_1+c_3)+c_1c_2+c_3c_4+2(c_2-c_3);\\
d_4 & = c_1+c_2+c_3+c_4.
\end{align*}
     In particular, noting $x\cdot \nabla = r\partial_r$, its radial component becomes
\begin{equation*}
\big(\Delta_{(c_3,c_4)}\Delta_{(c_1,c_2)}\big)^{\mathrm{rad}}u
=u^{(4)}(r)
+\frac{e_3}{r}\,u^{(3)}(r) +\frac{e_2}{r^{2}}\,u''(r)
+\frac{e_1}{r^{3}}\,u'(r)
+\frac{e_0}{r^{4}}\,u(r),
\end{equation*}
where
\begin{align*}
    e_0 &  = d_0;\\
    e_1 & = -(n-1)(n-3)  + (n-1)(d_3-d_4) + d_2 + d_1;\\
e_2 &= (n-1)(n-3) + (n-1)d_4 + d_3 + d_2;\\
e_3 & = 2(n-1) + d_4.
\end{align*}

\end{example}

In general, a direct computation shows that the radial component  of  $ \Delta_{(c_{2m-1}, c_{2m})}\cdots \Delta_{(c_1, c_2)}$ is a $2m$-th order Euler-type operator. Conversely, as established below, every $2m$-th order Euler-type operator can be realized as the radial component of  such a composition. Thus, the radial components of iterated divergence-form operators of the form $ \Delta_{(c_{2m-1}, c_{2m})}\cdots \Delta_{(c_1, c_2)}$ are in correspondence with Euler-type differential operators.

\begin{lemma}\label{ele3}
  For every $(a_1, \dots, a_{2m}) \in \mathbb{C}^{2m}$, there exists  some constant  $(c_1, \dots, c_{2m}) \in \mathbb{C}^{2m}$, such that the $2m$-th order Euler-type differential operator    $$u^{(2m)} + \frac{a_1}{r}u^{(2m-1)} + \cdots + \frac{a_N}{r^{2m}} u$$
   corresponds to the radial component of the operator $\Delta_{(c_{2m-1}, c_{2m})}\cdots \Delta_{(c_1, c_2)}$.
   \end{lemma}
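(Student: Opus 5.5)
The plan is to reduce the statement to Lemma \ref{ele2} by writing each radial factor $\Delta^{\text{rad}}_{(c_{2k-1},c_{2k})}$ as a composition of two first-order operators $\mathcal D_b$. By \eqref{eqn:Delta_c1_c2}, for radial $u$,
\[
\Delta^{\text{rad}}_{(c_1,c_2)}u = u''+\frac{c_1+c_2+n-1}{r}u'+\frac{c_1(c_2+n-2)}{r^2}u .
\]
I will compare this with the base-case computation in the proof of Lemma \ref{ele2}, namely $\mathcal D_{\beta}\mathcal D_{\alpha}w = w''+\frac{\alpha+\beta}{r}w'+\frac{\alpha(\beta-1)}{r^2}w$. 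Choosing $\alpha=c_1$ and $\beta=c_2+n-1$ gives $\alpha+\beta=c_1+c_2+n-1$ and $\alpha(\beta-1)=c_1(c_2+n-2)$. This yields the exact identity
\[
\Delta^{\text{rad}}_{(c_1,c_2)}=\mathcal D_{c_2+n-1}\mathcal D_{c_1}.
\]
Geometrically this is natural: $\Delta_{(c_1,c_2)}=r^{-c_2}\diver(r^{c_2-c_1}\nabla(r^{c_1}\cdot))$, and the radial divergence contributes the weight $r^{n-1}$.

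Next, I must check that the radial component of the composition is the composition of the radial components. This holds because each $\Delta_{(c_{2k-1},c_{2k})}$ maps radial functions to radial functions. Hence
\[
\big(\Delta_{(c_{2m-1},c_{2m})}\cdots\Delta_{(c_1,c_2)}\big)^{\text{rad}}
=\mathcal D_{c_{2m}+n-1}\mathcal D_{c_{2m-1}}\cdots\mathcal D_{c_2+n-1}\mathcal D_{c_1}.
\]

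For the converse direction I apply Lemma \ref{ele2} with $N=2m$ to the given $(a_1,\dots,a_{2m})$. This produces $(b_1,\dots,b_{2m})\in\mathbb C^{2m}$ with
\[
u^{(2m)}+\frac{a_1}{r}u^{(2m-1)}+\cdots+\frac{a_{2m}}{r^{2m}}u=\mathcal D_{b_{2m}}\cdots\mathcal D_{b_1}u.
\]
I then set $c_{2k-1}:=b_{2k-1}$ and $c_{2k}:=b_{2k}-(n-1)$ for $k=1,\dots,m$. With these choices, pairing consecutive factors and using the identity above gives $\mathcal D_{b_{2k}}\mathcal D_{b_{2k-1}}=\Delta^{\text{rad}}_{(c_{2k-1},c_{2k})}$. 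The composition then matches the Euler operator, which proves the lemma. Note that $a_N$ in the statement should be read as $a_{2m}$.

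The only place needing care is the claim that radial components compose, together with the convention for complex powers $r^{c}$. Radiality is preserved because $r^{c_1}u$ is radial whenever $u$ is, so $\nabla$ of it is $(\cdot)'\,x/r$, and the divergence of a radial vector field $\psi(r)x/r$ is $\psi'+\frac{n-1}{r}\psi$, which is again radial. Carrying out this computation once on a radial $u$ reproduces \eqref{eqn:Delta_c1_c2} and confirms the factorization $\mathcal D_{c_2+n-1}\mathcal D_{c_1}$ directly. I expect no real obstacle; the substantive algebraic step, the existence of the $b_j$ via the Fundamental Theorem of Algebra, is already supplied by Lemma \ref{ele2}.
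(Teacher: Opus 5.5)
Your proposal is correct and follows essentially the same route as the paper: factor the Euler operator via Lemma \ref{ele2}, then match each pair $\mathcal D_{b_{2k}}\mathcal D_{b_{2k-1}}$ with $\Delta^{\text{rad}}_{(c_{2k-1},c_{2k})}$. Your choice $(c_{2k-1},c_{2k})=(b_{2k-1},b_{2k}-n+1)$ is exactly the first option in \eqref{c2}; the paper also records the second option $(b_{2k}-1,\,b_{2k-1}-n+2)$.
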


\begin{proof}
First, according to Lemma \ref{ele2}, there exists  $(b_1, \ldots, b_{2m}) \in \mathbb{C}^{2m}$ such that the general Euler-type operator can be decomposed as a product of first-order singular operators:
$$ u^{(2m)} + \frac{a_1}{r}u^{(2m-1)} + \cdots + \frac{a_{2m}}{r^{2m}} u =\mathcal{D}_{b_{2m}} \cdots \mathcal{D}_{b_1} u. $$

For each $j=1, \ldots, m$, we seek pairs $(c_{2j-1}, c_{2j})$ such that the radial component of $\Delta_{(c_{2j-1}, c_{2j})}$ coincides with the second-order factor $\mathcal{D}_{b_{2j}} \mathcal{D}_{b_{2j-1}}$:
\begin{equation}\label{bc2}
    \Delta^{\text{rad}}_{(c_{2j-1}, c_{2j})} = \mathcal{D}_{b_{2j}} \mathcal{D}_{b_{2j-1}}.
\end{equation}
Using the expansion in \eqref{eqn:Delta_c1_c2}, the operator is given by
$$ \Delta^{\text{rad}}_{(c_{2j-1}, c_{2j})} u =  u'' +\frac{c_{2j-1}+c_{2j}+n-1}{r}u' +\frac{c_{2j-1}(c_{2j}+n-2)}{r^2}u. $$

\noindent
On the other hand, a direct expansion of the product $\mathcal{D}_{b_{2j}}\mathcal{D}_{b_{2j-1}}$ yields
 $$  \mathcal D_{b_{2j}}\mathcal D_{b_{2j-1}} u =   u''+ \frac{b_{2j-1}+b_{2j}}{r}u'+\frac{b_{2j-1}(b_{2j}-1)}{r^2}u. $$

 \noindent
To ensure \eqref{bc2}, we equate the corresponding coefficients:
\begin{equation*}
\begin{cases}
    b_{2j-1}+b_{2j}   = c_{2j-1}+c_{2j}+n-1;\\
	b_{2j-1}(b_{2j}-1)   = c_{2j-1}(c_{2j}+n-2).
\end{cases}
	\end{equation*}

\noindent
This system admits two possible solutions for the parameters $(b_{2j-1}, b_{2j})$:
\begin{equation}
	\label{b2}
	(b_{2j-1}, b_{2j}) = (c_{2j-1}, c_{2j} + n - 1) \quad \text{or} \quad (b_{2j-1}, b_{2j}) = (c_{2j} + n - 2, c_{2j-1} + 1).
\end{equation}
Equivalently, one may determine the parameters $(c_{2j-1}, c_{2j})$ as
\begin{equation}\label{c2}
	(c_{2j-1}, c_{2j}) = (b_{2j-1}, b_{2j} - n + 1) \quad \text{or} \quad (c_{2j-1}, c_{2j}) = (b_{2j} - 1, b_{2j-1} - n + 2).
\end{equation}

With either choice, the $2m$-th order Euler-type operator is realized precisely as the radial component of the iterated divergence-form operator $\Delta_{(c_{2m-1}, c_{2m})} \cdots \Delta_{(c_1, c_2)}$. This completes the proof.

\end{proof}

 Using argument analogous to those used in the proof   of Theorem \ref{thm:manifold_tri} and Remark \ref{remark:h(r)_over_r}, one obtains corresponding uniqueness results for these iterated weighted divergence-form operators.

\begin{thm}
	\label{lpdeg}
	Let $r_0>0$ and $m \in \mathbb{Z}^+$, and $(c_1, \ldots, c_{2m}) \in \mathbb{C}^{2m}$. Let $u : B_{r_0}  \to \mathbb{C}^{\nu}$ be a vector-valued $C^{2m}$ function  satisfying the differential inequality
	\begin{equation}
		\label{pdeg}
		\left| \Delta_{(c_{2m-1}, c_{2m})} \cdots \Delta_{(c_1, c_2)} u(x) \right| \leq \frac{h(|x|)}{|x| } \cdot\sum_{j=0}^{2m-1} |\nabla^j u(x)| \quad \text{on } \ B_{r_0}  \setminus \{0\}
		\end{equation}
		for some nonnegative function $h\in C([0, r_0))$ with  $  h(0) =0$. If $u$ is radially symmetric  in $B_{r_0}$ about the origin and
		\begin{equation}
			\label{bjm}
			2j - \mathrm{Re}(c_{2j-1}) - 2m < 2 \quad \text{and} \quad 2j - \mathrm{Re}(c_{2j}) - 2m < n, \ \ j=1, \ldots, m,
			\end{equation}
			  then exactly one of the following holds:
			\begin{itemize}
							\item There exists some $j_0 \in \{0, \dots, m-1\}$ such that $\partial_r^{2j_0} u(0) \neq 0$;
				\item $u \equiv 0$ in $B_{r_0}$.
			\end{itemize}
		\end{thm}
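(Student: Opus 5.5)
Following the proof of Theorem \ref{thm:manifold_tri}, we reduce \eqref{pdeg} to a radial Euler-type inequality of the form \eqref{gode1} and then apply Theorem \ref{gron}. It suffices to show that if $\partial_r^{2j}u(0)=0$ for $j=0,\dots,m-1$, then $u\equiv 0$.

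\textbf{Step 1: vanishing jets.} Write $u=\phi(r)$. The Euclidean case of Lemma \ref{odd} gives $\phi^{(2j-1)}(0)=0$ for $j=1,\dots,m$. Together with the hypothesis, this yields $\phi^{(k)}(0)=0$ for all $k=0,\dots,2m-1$.

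\textbf{Step 2: factorization.} By \eqref{eqn:Delta_c1_c2} and \eqref{b2}, each radial factor $\Delta^{\text{rad}}_{(c_{2j-1},c_{2j})}$ equals $\mathcal D_{c_{2j}+n-1}\mathcal D_{c_{2j-1}}$. Since each $\Delta_{(c_1,c_2)}$ maps radial functions to radial functions, the composition acts on $\phi$ as $\mathcal D_{b_{2m}}\cdots\mathcal D_{b_1}\phi$ with
\[
b_{2j-1}=c_{2j-1},\qquad b_{2j}=c_{2j}+n-1 .
\]
Condition \eqref{bjN} with $N=2m$ reads $i-\mathrm{Re}(b_i)-2m<1$ for every $i$.
\begin{itemize}
\item For $i=2j-1$ this becomes $2j-\mathrm{Re}(c_{2j-1})-2m<2$.
\item For $i=2j$ this becomes $2j-\mathrm{Re}(c_{2j})-(n-1)-2m<1$, that is, $2j-\mathrm{Re}(c_{2j})-2m<n$.
\end{itemize}
These are exactly the two conditions in \eqref{bjm}, so the hypothesis \eqref{bjm} is precisely what makes \eqref{bjN} hold for $(b_1,\dots,b_{2m})$.

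\textbf{Step 3: controlling the right-hand side.} Apply the Euclidean version of Lemma \ref{lemma:covariant_bound}. Concretely, $\nabla^j\phi(|x|)$ is a sum of terms $\phi^{(k)}(r)$ times homogeneous coefficients of degree $k-j$, so
\[
\sum_{j=0}^{2m-1}|\nabla^j u|\le C\sum_{k=0}^{2m-1}\frac{|\phi^{(k)}(r)|}{r^{2m-1-k}}\quad\text{for } r<\min(1,r_0).
\]
The estimate needs $r\le 1$ so that the lower powers $r^{k-j}$ are dominated by $r^{k-2m+1}$. Substituting into \eqref{pdeg} gives
\[
|\mathcal D_{b_{2m}}\cdots\mathcal D_{b_1}\phi|\le C\,h(r)\sum_{k=0}^{2m-1}\frac{|\phi^{(k)}(r)|}{r^{2m-k}} .
\]
This is \eqref{gode1} with the continuous weight $Ch$ vanishing at $0$. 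Unlike the manifold case, there are no curvature error terms.

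\textbf{Step 4: conclusion.} Theorem \ref{gron} gives $\phi\equiv0$ near $0$. The same theorem applied with the weight $h$ on the whole interval $[0,r_0)$ (or the continuation argument in its proof) then gives $\phi\equiv0$ on $[0,r_0)$, hence $u\equiv0$. The two alternatives are mutually exclusive, since $u\equiv 0$ forces all jets to vanish.

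\textbf{Main obstacle.} The main point is the bookkeeping in Step 2: matching \eqref{bjm} to \eqref{bjN} requires choosing the first root in \eqref{b2} consistently for every factor. The only other subtlety is that $u$ is $\mathbb C^\nu$-valued with complex $c_j$. This is handled because $\mathcal D_b$ acts componentwise and Theorem \ref{gron} already allows complex parameters and vector values.
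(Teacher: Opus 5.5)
Your proposal is correct and follows essentially the same route as the paper. You factor the radial part via \eqref{b2} into $\mathcal D_{b_{2m}}\cdots\mathcal D_{b_1}$, bound the right-hand side of \eqref{pdeg} with Lemma \ref{lemma:covariant_bound} to obtain \eqref{gode1} with weight $Ch(r)$, check that \eqref{bjm}$\Leftrightarrow$\eqref{bjN}, and then invoke Theorem \ref{gron}. Two small points: the first root in \eqref{b2} is not actually required, since the paper verifies that the second choice $(b_{2j-1},b_{2j})=(c_{2j}+n-2,\,c_{2j-1}+1)$ also yields \eqref{bjN}; and because $r_0<\infty$, you can drop your restriction to $r\le 1$ by absorbing $\max(1,r_0)^{2m-1}$ into the constant, which gives the inequality directly on all of $(0,r_0)$.
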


		\begin{cor} \label{gthm:application_f_x_u_nabla_u_2m-1}
			Let $r_0>0$ and $m \in \mathbb{Z}^+$, and let $(c_1, \ldots, c_{2m}) \in \mathbb{C}^{2m}$ satisfy \eqref{bjm}. Suppose $u, v: B_{r_0}  \to \mathbb{C}^{\nu}$ are two vector-valued $C^{2m}$ solutions to the nonlinear Poisson system
			\begin{equation*}
				\Delta_{(c_{2m-1}, c_{2m})} \cdots \Delta_{(c_1, c_2)} u(x) = f(x, u(x), \nabla u(x), \dots, \nabla^{2m-1} u(x))   \quad \text{on } \ B_{r_0}  \setminus \{0\},
				\end{equation*}
				with prescribed initial data $$\nabla^j u(0) = \nabla^j v(0) = \zeta_j,  \quad j = 0, \dots, 2m-1.$$
				
				\noindent
			Here the vector-valued function $f$ is continuous in all variables and locally Lipschitz continuous in $(u, \ldots, \nabla^{2m-1} u)$. If the difference $u - v$ is radially symmetric  in $B_{r_0} $ about $0$, then $u \equiv v$ in $B_{r_0} $.
			\end{cor}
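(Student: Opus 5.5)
The argument follows the proof of Theorem \ref{thm:application_manifold_2m-1}, with Theorem \ref{lpdeg} in place of Theorem \ref{thm:manifold_tri}.

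\textbf{Differential inequality for the difference.} Set $w := u - v$, which is $C^{2m}$ on $B_{r_0}$ and radially symmetric about $0$ by hypothesis. Since both functions solve the system on $B_{r_0}\setminus\{0\}$,
\[
\Delta_{(c_{2m-1}, c_{2m})} \cdots \Delta_{(c_1, c_2)} w = f(\cdot, u, \ldots, \nabla^{2m-1}u) - f(\cdot, v, \ldots, \nabla^{2m-1} v).
\]
The iterated operator is linear, so this identity is exact.

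Fix $r_1<r_0$. On $\overline{B_{r_1}}$ the jets $(u,\ldots,\nabla^{2m-1}u)$ and $(v,\ldots,\nabla^{2m-1}v)$ take values in a compact set. The local Lipschitz continuity of $f$ therefore gives a constant $L$ with
\[
|\Delta_{(c_{2m-1}, c_{2m})} \cdots \Delta_{(c_1, c_2)} w| \le L\sum_{j=0}^{2m-1}|\nabla^j w| \quad \text{on } B_{r_1}\setminus\{0\}.
\]
This is exactly \eqref{pdeg} with $h(r) := Lr$, which is continuous and satisfies $h(0)=0$. (Writing $L = \frac{Lr}{r}$ is the observation made in Remark \ref{remark:h(r)_over_r}.)

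\textbf{Applying Theorem \ref{lpdeg} on $B_{r_1}$.} The prescribed data give $\nabla^j w(0)=0$ for $j=0,\ldots,2m-1$. For a radial profile $w=\psi(r)$, restricting to a line through the origin gives the even function $t\mapsto\psi(|t|)$, which is $C^{2m}$. Its derivatives at $0$ are the directional derivatives $\nabla^k w(0)[e,\ldots,e]$, so $\partial_r^{k}w(0)=\psi^{(k)}(0)=0$ for all $k\le 2m-1$. In particular every even-order radial derivative of order at most $2m-2$ vanishes, so the first alternative of Theorem \ref{lpdeg} fails. The parameters satisfy \eqref{bjm} by assumption, so Theorem \ref{lpdeg} yields $w\equiv 0$ on $B_{r_1}$. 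Since $r_1<r_0$ was arbitrary, $u\equiv v$ on $B_{r_0}$.

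\textbf{Main obstacle.} The only delicate point is that the Lipschitz constant $L$ is merely local. This is why the argument shrinks to $B_{r_1}$ with $r_1<r_0$ before invoking Theorem \ref{lpdeg}, and then exhausts $B_{r_0}$ by letting $r_1\to r_0$. Everything else is direct substitution into Theorem \ref{lpdeg}.
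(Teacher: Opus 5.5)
Your proposal is correct and follows the paper's intended argument, which it omits as being ``similar to that of Theorem \ref{thm:application_manifold_2m-1}''. You set $w=u-v$, use the local Lipschitz bound on $B_{r_1}$ with $r_1<r_0$ to obtain \eqref{pdeg} with $h(r)=Lr$, observe that all radial jets of $w$ up to order $2m-1$ vanish, apply Theorem \ref{lpdeg}, and then let $r_1\to r_0$.
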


\begin{proof}[\textbf{Proofs of Theorem \ref{lpdeg} and Corollary \ref{gthm:application_f_x_u_nabla_u_2m-1}:}]
	If $u$ is radially symmetric about $0$ with profile function $\phi$, we may choose $(b_1, \dots, b_N)$ according to \eqref{b2} to write
	$$\Delta_{(c_{2m-1}, c_{2m})}\cdots \Delta_{(c_1, c_2)}u(x)=\mathcal D_{b_{2m}} \cdots\mathcal D_{b_1} \phi(r).$$
	 Applying the estimate in Lemma \ref{lemma:covariant_bound} to the right-hand side of the inequality \eqref{pdeg}, 
	we further reduce  \eqref{pdeg}  to
$$| \mathcal D_{b_{2m}} \cdots\mathcal D_{b_1} \phi(r)|\le  \frac{h(r)}{r}\cdot C\sum_{j=0}^{2m-1}\frac{|\phi^{(j)}(r)|}{r^{2m-1 -j}} =  Ch(r)\cdot\sum_{j=0}^{2m-1}\frac{|\phi^{(j)}(r)|}{r^{2m -j}}.$$
   It remains  to show that   \eqref{bjm} is equivalent to \eqref{bjN}, as the rest of the proof of Theorem \ref{lpdeg} then follows directly from  Theorem \ref{gron}. The proof of Corollary \ref{gthm:application_f_x_u_nabla_u_2m-1}  is similar to that of Theorem  \ref{thm:application_manifold_2m-1} and is therefore omitted.

For any $j =1, \ldots, m,$ in the first case of \eqref{b2} where
  $ b_{2j-1} = c_{2j-1}$ and $ b_{2j} = c_{2j}+n-1 $,   \eqref{bjm} implies
	\begin{align*}
		&2j-\mathrm{Re}(c_{2j-1})-2m     =  2j-\mathrm{Re}(b_{2j-1})-2m<2  \Rightarrow  (2j-1)-\mathrm{Re}(b_{2j-1})-2m<1; \\
		&2j-\mathrm{Re}(c_{2j})-2m   =  2j-\left (\mathrm{Re}(b_{2j})-n+1 \right )-2m<n  \Rightarrow 2j-\mathrm{Re}(b_{2j})-2m<1.
	\end{align*}
If   $  b_{2j-1} = c_{2j}+n-2$ and $b_{2j} =c_{2j-1}+1$ in \eqref{b2}, then \eqref{bjm} implies
	\begin{align*}
	&	2j-\mathrm{Re}(c_{2j-1})-2m     =  2j-\left (\mathrm{Re}(b_{2j})-1 \right )-2m<2   \Rightarrow  2j-\mathrm{Re}(b_{2j})-2m<1; \\
	&	2j-\mathrm{Re}(c_{2j})-2m    =  2j-\left (\mathrm{Re}(b_{2j-1})-n+2 \right )-2m<n   \Rightarrow (2j-1)-\mathrm{Re}(b_{2j-1})-2m<1.
	\end{align*}

\noindent
In either case \eqref{bjN} is satisfied with $N=2m$. The proof is complete.

\end{proof}

\begin{example}
	\label{ex4}
	\upshape 	
        In the case when $c_2=-c_1\in \mathbb C$, $\Delta_{(c_1, c_2)}$ reduces to a Hardy-type Schr\"odinger operator  	with a (possibly complex) inverse-square potential:
 	$$\Delta_{(c_1, -c_1)} u = \Delta u + \frac{c_1(n - 2 - c_1)}{r^2} u.$$
 	
 	\noindent
 	If, in addition, that $c_1 = -c_2 = \frac{n-2}{2}$, the operator becomes:$$\Delta_{\left(\frac{n-2}{2}, -\frac{n-2}{2}\right)} u = \Delta u + \frac{(n-2)^2}{4r^2} u,$$
 	which is the conformally invariant inverse-square perturbation of the Euclidean Laplacian.
 	
    Moreover, note that condition \eqref{bjm} is automatically satisfied provided that $\mathrm{Re}(c_{2j-1}) > -2$ and $\mathrm{Re}(c_{2j}) > -n$ for $j = 1, \dots, m$. Consequently, the conclusions of Theorem \ref{lpdeg} and Corollary \ref{gthm:application_f_x_u_nabla_u_2m-1} apply to the operator $\Delta^m_{(c_1, -c_1)}$ as long as $-2 < \mathrm{Re}(c_1) < n$.
 	 	 		\end{example}
 		
\vspace{.1in}

Finally, we investigate the existence  of $C^N$ radial solutions to the weighted divergence-form operator $\Delta_{(c_{2m-1}, c_{2m})}\cdots \Delta_{(c_1, c_2)}$, subject to compatible initial conditions. For a solution to achieve $C^N$ regularity, Lemma \ref{odd} implies that all odd-order jets at the origin must vanish. Furthermore, following the observations in the proof of Theorem \ref{iode} and Remark \ref{re}, the even-order jets must also vanish, with the exception of those in the set $\Gamma$ defined as in \eqref{ga} below, the subset of the even integer indicial roots of the operator's radial component in the range $\{0, \dots, 2m-2\}$. At these specific orders, the initial jets may be prescribed arbitrarily.

\begin{thm}
	\label{pe}
	Let $r_0>0$ and  $m \in \mathbb{Z}^+$, and let $(c_1, \ldots, c_{2m}) \in \mathbb{C}^{2m}$ satisfy \eqref{bjm}. Define the index set $\Gamma$ as
	\begin{equation}
		\label{ga}
		\Gamma = \big( \left \{2j-2-c_{2j-1} : j=1, \ldots, m \right \} \cup \left \{2j-n -c_{2j} : j=1, \ldots, m \right \} \big) \cap I,
	\end{equation}
	where $I := \{ 0, 2, \ldots, 2m-2\}$. Given $\zeta_j \in \mathbb{C}$ for each $j \in \Gamma$, there exists a $C^{2m}$ radial solution on $B_{r_0} $ to the following nonlinear system 
   \begin{equation*}
	\left \{
	\begin{aligned}
		&\Delta_{(c_{2m-1}, c_{2m})}\cdots \Delta_{(c_1, c_2)}  u  =f(r, u, \partial_r  u, \dots, \partial_r^{2m-1} u) \quad \text{on } \ B_{r_0}  \setminus \{0\}; \\
		&\partial_r^{j} u(0)= \zeta_j, \quad \quad j\in  \Gamma;\\
		&\partial_r^{j} u (0)= 0, \quad \quad j\in  \{0, \ldots, 2m-1\} \setminus \Gamma,
	\end{aligned}
	\right.
\end{equation*}
		where the vector-valued function $f$ is continuous in all its variables. If, in addition, $f$ is locally Lipschitz continuous in the variables $(u, \partial_r  u, \dots, \partial_r^{(2m-1)} u)$, then this solution is the unique $C^{2m}$ radial solution to the system.
	\end{thm}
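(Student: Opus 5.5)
The plan is to reduce Theorem \ref{pe} to the ODE existence and uniqueness theorem, Theorem \ref{iodea}, applied to the radial profile. Then we check that the resulting one-variable solution defines a $C^{2m}$ radial function on $B_{r_0}$.

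\textbf{Reduction to the radial profile.} For a radial $u=\phi(r)$, Lemma \ref{ele3} and \eqref{b2} give parameters $(b_1,\dots,b_{2m})$ with
$$\Delta_{(c_{2m-1}, c_{2m})}\cdots \Delta_{(c_1, c_2)}u=\mathcal D_{b_{2m}}\cdots\mathcal D_{b_1}\phi .$$
We take the first choice in \eqref{b2}, namely $b_{2j-1}=c_{2j-1}$ and $b_{2j}=c_{2j}+n-1$. The computation in the proof of Theorem \ref{lpdeg} shows that \eqref{bjm} implies \eqref{bjN} with $N=2m$.

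Next we identify the set $\Lambda$ of \eqref{Lam}. Its elements are $-b_k+k-1$. For $k=2j-1$ this equals $2j-2-c_{2j-1}$, and for $k=2j$ it equals $2j-1-c_{2j}-n+1=2j-n-c_{2j}$. Hence $\Gamma=\Lambda\cap I$ is exactly the set of even indices in $\Lambda$.

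We then apply Theorem \ref{iodea} with $N=2m$, nonlinearity $f(r,\phi,\dots,\phi^{(2m-1)})$, and initial data $\zeta_j$ for $j\in\Gamma$. Every other jet in $A=\{0,\dots,2m-1\}$, including the odd indices of $\Lambda$, is set to $0$. This yields $\phi\in C^{2m}([0,r_0))$ solving the profile ODE with the prescribed jets. Uniqueness among $C^{2m}$ radial solutions is also immediate from Theorem \ref{iodea} in the Lipschitz case: any $C^{2m}$ radial solution has a $C^{2m}$ profile with vanishing odd jets by Lemma \ref{odd}, so it solves the same singular initial value problem.

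\textbf{The main obstacle: regularity of $u(x)=\phi(|x|)$.} The remaining issue is that $u$ must be $C^{2m}$ across the origin. A $C^{2m}$ profile with vanishing odd jets up to order $2m-1$ need not give a $C^{2m}$ function of $x$, because $\phi^{(2m)}$ may fail to extend evenly in a smooth way. My plan is to write
$$\phi(r)=P(r^2)+\psi(r),$$
where $P$ is the even Taylor polynomial of degree at most $2m-2$, so that $P(|x|^2)$ is a polynomial in $x$. The remainder $\psi$ then satisfies $\psi^{(k)}(0)=0$ for $k\le 2m-1$.

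For $\psi(|x|)$ I would use the integral representation $\psi=\mathcal T g$ from Lemma \ref{tb}, with $g$ equal to the right-hand side minus the polynomial contribution. Then I would estimate $|\partial_x^\alpha \psi(|x|)|$ by $\sum_k|\psi^{(k)}(r)|r^{k-|\alpha|}$, using the same chain-rule structure as in Lemma \ref{lemma:covariant_bound}. The bounds \eqref{wjn} give $|\psi^{(k)}(r)|\lesssim r^{2m-k}$. This yields boundedness and continuity of derivatives of order at most $2m-1$ at $0$, and boundedness at order $2m$.

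Continuity of the order-$2m$ derivatives at the origin needs the limits \eqref{cla}. These show that $\psi^{(k)}(r)/r^{2m-k}\to a_k g(0)$, so the top derivatives behave like those of $g(0)\,a_0|x|^{2m}$ plus $o(1)$. Since $|x|^{2m}$ is a polynomial, this gives continuity. This last verification is where I expect the most care to be needed.

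Finally, a solution on a small ball extends to all of $B_{r_0}$ by regular ODE continuation, as in Theorem \ref{cont}.
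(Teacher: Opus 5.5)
Your reduction is the paper's proof. The paper likewise rewrites the operator on radial functions as $\mathcal D_{b_{2m}}\cdots\mathcal D_{b_1}$ via \eqref{b2}, checks $\Gamma=\Lambda\cap\{0,2,\dots,2m-2\}$ using \eqref{c2}, and appeals to Theorem \ref{iodea}.

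The paper never verifies that $\phi(|x|)$ is $C^{2m}$ across the origin, so your extra step addresses a point it leaves implicit. However, your reason for calling it the main obstacle is mistaken. A profile $\phi\in C^{2m}([0,r_0))$ whose odd jets vanish up to order $2m-1$ always gives $\phi(|x|)\in C^{2m}(B_{r_0})$:
\begin{itemize}
\item Taylor's theorem gives $\phi(r)=Q(r^2)+R(r)$, with $Q$ a polynomial of degree at most $m$ (the $r^{2m}$ term included) and $R^{(k)}(r)=o(r^{2m-k})$ for $0\le k\le 2m$.
\item The chain-rule bound you quote then gives $\partial_x^\alpha R(|x|)=o(|x|^{2m-|\alpha|})$ for $|\alpha|\le 2m$.
\item Hence $R(|x|)$ is $C^{2m}$ with vanishing $2m$-jet at $0$, and $Q(|x|^2)$ is a polynomial in $x$.
\end{itemize}
Nothing has to extend smoothly: the $2m$-th derivative of the even extension is $\phi^{(2m)}(|t|)$, which only needs to be continuous.

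Your route through $\psi=\mathcal Tg$, \eqref{wjn} and \eqref{cla} reaches the same conclusion, but it is Taylor's theorem in disguise. The limits \eqref{cla} just say $\psi^{(k)}(r)=\frac{\psi^{(2m)}(0)}{(2m-k)!}r^{2m-k}+o(r^{2m-k})$ with $\psi^{(2m)}(0)=(2m)!\,a_0g(0)$. You also need the $k=2m$ case, which \eqref{cla} does not cover; it follows from the continuity of $(\mathcal Tg)^{(2m)}$ in Lemma \ref{tb}. The equation itself plays no role in this step.

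There is one genuine gap, inherited from the paper's proof of Theorem \ref{cont}. Your final sentence extends a small-ball solution to all of $B_{r_0}$ by regular ODE continuation, but for nonlinear $f$ the solution of the nonsingular ODE on $(r_1,r_0)$ can blow up. Here is a counterexample:
\begin{itemize}
\item Take $m=1$, $n=2$, $c_1=c_2=0$, so \eqref{bjm} holds and $\Gamma=\{0\}$, and take $f=1+(\partial_r u)^2$.
\item Then $v=\phi'$ satisfies $(rv)'=r(1+v^2)$ with $v(0)=0$, so $v>0$.
\item For $r\ge 1$ this gives $v'\ge (v-\tfrac12)^2+\tfrac34$.
\item Comparison with this Riccati equation shows $v$ is unbounded before $r=1+2\pi/\sqrt{3}<5$, so no $C^2$ radial solution exists on $B_5$.
\end{itemize}
What your argument (and the paper's) actually proves is existence on $B_{r_1}$ for some small $r_1>0$, or on all of $B_{r_0}$ under an additional growth assumption on $f$. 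The uniqueness assertion is unaffected.
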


In the  special case where the operator consists of powers of the Laplacian, a straightforward computation yields the following:

\begin{example}
	\label{ex5}
    \upshape For the operator $\Delta^m$, where $c_j = 0$ for all $j = 1, \dots, 2m$, a direct computation shows that the corresponding set $\Gamma$ is precisely
    $$ \Gamma = \{ 0, 2, \ldots, 2m-2 \}. $$
    In particular, this implies that initial data  for $ \Delta^m$ can be freely prescribed at every even order up to $2m-2$.
  \end{example}

\begin{proof}[\textbf{Proof of Theorem \ref{pe}}: ]
 Suppose $u$ is radial about the origin. By Lemma \ref{odd}, the odd-order derivatives must vanish: $$u^{(j)}(0) =0, j=1, 3,\ldots, 2m-1.$$  Moreover, $u$ satisfies
  $$\Delta_{(c_{2m-1}, c_{2m})}\cdots \Delta_{(c_1, c_2)}  u  = \mathcal D_{b_{2m}} \cdots\mathcal D_{b_1}  u   = f(r, u, \dots, u^{(n-1)}), $$
  where the coefficients $(b_{2j-1}, b_{2j})$ are related to $(c_{2j-1}, c_{2j})$ by \eqref{b2}, or equivalently \eqref{c2}.

 Let $\Lambda$ be the index set defined as in \eqref{Lam} with respect to $(b_1, \dots, b_{2m})$.  We shall show that $$\Gamma = \Lambda\cap \{0, 2,\ldots,2m-2  \}. $$
 Indeed, by the definition of $\Gamma$ in \eqref{ga}, every $\gamma\in \Gamma$ satisfies
    $$\gamma = -c_{2j-1}+2j-2 \quad \text{or} \quad  -c_{2j}+2j -n$$
    for some $j=1, \ldots, m$. Substituting these into \eqref{c2}, we obtain
    $$ \gamma  +b_j-j+1 =0$$
    for some $j=1, \ldots, 2m-1 $.This establishes that $\Gamma \subset \Lambda \cap \{0, 2, \dots, 2m-2\}$.  The reverse inclusion is proved  similarly.

  Consequently, the proof reduces to establishing the existence of $C^N([0, r_0))$ solutions to the following initial value problem:
    \begin{equation*}
         \begin{cases}
             &\mathcal D_{b_{2m}} \cdots\mathcal D_{b_1}  u(r)  = f(r, u, \dots, u^{(n-1)}),\ \ r\in (0, r_0),\\
             &u^{(j)} (0) = \zeta_j, \quad j\in \Lambda\cap \{0, 2,\ldots,2m-2  \}, \\
             &u^{(j)} (0) = 0, \quad j\in \{0, \ldots, 2m-1\}\setminus \left( \Lambda\cap \{0, 2,\ldots,2m-2  \}\right).
        \end{cases}
    \end{equation*}
    The existence of such a solution is then guaranteed by Theorem \ref{iodea}.

\end{proof}

  As an immediate consequence of Theorem \ref{pe} and Example \ref{ex5}, existence and uniqueness hold for radial solutions to the initial value problem for $\Delta^m$, as stated below.

 \begin{cor}\label{pel}
	Let $r_0>0$ and $m \in \mathbb{Z}^+$. For any $(\zeta_0, \dots, \zeta_{m-1}) \in \mathbb{C}^m$, there exists a $C^{2m}(B_{r_0} )$ radial solution to the system
\begin{equation*}
	\left \{
	\begin{aligned}
		&\Delta^{m}  u  = f(r, u, \partial_r  u, \dots, \partial_r^{2m-1} u) \quad \text{on } \ B_{r_0} ; \\
		&\partial_r^{2j} u(0)= \zeta_j, \quad j= 0, \ldots, m-1; \\
		&\partial_r^{2j+1} u (0)= 0, \quad j= 0, \ldots, m-1,
	\end{aligned}
	\right.
\end{equation*}
		where the vector-valued function $f$ is continuous with respect to all its arguments. If, in addition, $f$ is locally Lipschitz continuous with respect to the variables $(u, \partial_r  u, \dots, \partial_r^{2m-1} u)$, then this is the unique $C^{2m}$ radial solution.
	
 \end{cor}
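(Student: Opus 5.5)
The plan is to derive Corollary \ref{pel} as a direct specialization of Theorem \ref{pe} to $c_1=\cdots=c_{2m}=0$, where $\Delta_{(c_{2m-1},c_{2m})}\cdots\Delta_{(c_1,c_2)}=\Delta^m$. The argument has three steps: check the hypotheses of Theorem \ref{pe}, compute the index set $\Gamma$, and then handle the singular point $0$ and uniqueness.

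\textbf{Step 1 (hypothesis \eqref{bjm}).} With all $c_j=0$, condition \eqref{bjm} reads $2j-2m<2$ and $2j-2m<n$ for $j=1,\dots,m$. Since $2j-2m\le 0$ and $n\ge 2$, both inequalities hold trivially.

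\textbf{Step 2 (computing $\Gamma$).} By \eqref{ga}, $\Gamma$ consists of the even numbers in $I=\{0,2,\dots,2m-2\}$ that appear among $\{2j-2\}\cup\{2j-n\}$. The first family alone already runs through $\{0,2,\dots,2m-2\}$ as $j=1,\dots,m$, so $\Gamma=I$, which is the content of Example \ref{ex5}.

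As a cross-check, I will compare this with the $\Lambda$ from \eqref{Lam}. Taking the first option in \eqref{b2} gives the pairs $(b_{2j-1},b_{2j})=(0,n-1)$, i.e.\ $(\mathcal D_{n-1}\mathcal D_0)^m$. The indicial roots are then $j-1-b_j$, namely $2j-2$ for the odd slots and $2j-1-(n-1)=2j-n$ for the even slots. Intersecting with the even integers gives exactly $\{0,\dots,2m-2\}$, as expected.

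\textbf{Step 3 (applying Theorem \ref{pe} and uniqueness).} Theorem \ref{pe} now supplies a $C^{2m}$ radial solution with $\partial_r^{2j}u(0)=\zeta_j$ prescribed freely for $j=0,\dots,m-1$, with all odd jets zero, and unique when $f$ is locally Lipschitz.

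One point needs care: Theorem \ref{pe} asserts the equation only on $B_{r_0}\setminus\{0\}$, while the corollary states it on $B_{r_0}$. I will close this by continuity. Both $\Delta^m u$ and $f(r,u,\dots,\partial_r^{2m-1}u)$ are continuous up to $0$, since $u\in C^{2m}$ and $f$ is continuous. Equality on the punctured ball therefore extends to the origin.

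This continuity extension and the jet bookkeeping are the only non-mechanical points; I expect no real obstacle.
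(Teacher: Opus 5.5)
Your proposal is correct and matches the paper's route: the paper obtains this corollary directly from Theorem \ref{pe} with $c_1=\cdots=c_{2m}=0$, using Example \ref{ex5} to identify $\Gamma=\{0,2,\dots,2m-2\}$. Your continuity remark, which extends the equation from $B_{r_0}\setminus\{0\}$ to $B_{r_0}$, supplies a small point the paper leaves implicit.
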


 More generally, Theorem \ref{pe} can be  applied to give existence of $C^{2m}$ solutions to  a nonlinear singular system of the following form
\begin{equation} \label{h}
    \Delta^{m}  u   + \sum_{\substack{j,k\ge 0\\0\le j+2k\le 2m-1}} \frac{d_{j, k}}{r^{2m-2k}} (x\cdot \nabla)^j\Delta^k u  = f(r, u, \partial_r  u, \dots, \partial_r^{(2m-1)} u),
\end{equation}
where $ (d_{j, k}) $ are  complex constants  and $f$ is continuous in all its variables. Indeed, noting that $ x\cdot \nabla = r\partial_r$,    the left hand side  of \eqref{h} can be rewritten as
$$ \Delta^{m}  u   + \sum_{\substack{j,k\ge 0\\0\le j+2k\le 2m-1}} \frac{\tilde d_{j, k}}{r^{2m-2k-j}}\partial_r^j\Delta^k u   $$
for some  $(\tilde d_{j, k})$ determined by $ (d_{j, k}) $. In particular, its radial component becomes
$$ \partial_r^{2m}  u
  +   \frac{a_{1}}{r } \partial_r^{2m-1} u +\cdots + \frac{a_{2m}}{r^{2m}}u $$
for some constants $(a_1, \ldots,a_{2m})\in \mathbb C^{2m}$. It then follows from  Lemma \ref{ele3}   that there exists $(c_1, \ldots, c_{2m})\in \mathbb C^{2m}$ such that the radial component of the left hand side  of \eqref{h} can be written as
\begin{equation}\label{rse}
   \big( \Delta^{m}   + \sum_{\substack{j,k\ge 0\\0\le j+2k\le 2m-1}} \frac{d_{j, k}}{r^{2m-2k}} (x\cdot \nabla)^j\Delta^k \big)^{\text{rad}} = \left( \Delta_{(c_{2m-1}, c_{2m})}\cdots \Delta_{(c_1, c_2)}\right)^{\text{rad}}.
\end{equation}

In order to state the main result in this direction, we introduce the sets
\begin{equation}\label{Qj}
   Q_{(d_{j, k})  }: = \left\{ (c_1, \ldots, c_{2m})\in\mathbb C^{2m}: \eqref{rse} \ \text{holds}    \right\}
\end{equation}
and
\begin{equation}\label{b3}
    I_3:  =  \left\{ (c_1, \ldots, c_{2m})\in\mathbb C^{2m}: 2j - \mathrm{Re}(c_{2j-1}) - 2m < 2, 2j - \mathrm{Re}(c_{2j}) - 2m < n, j=1, \ldots, m\right\},
    \end{equation}
which is the set of $(c_1, \ldots, c_{2m})$ satisfying \eqref{bjm}.

\medskip

\begin{cor}\label{re2}
   Let $r_0>0$ and $m \in \mathbb{Z}^+$ and   $\left (d_{j, k} \right )$ be   complex constants, $j, k\ge 0$, $0\le j+2k\le 2m-1$.  Suppose
   $$Q_{(d_{j, k})  }\cap I_3 \ne \emptyset, $$
   where $ Q_{(d_{j, k})  }$ and $I_3$ are the sets defined in \eqref{Qj} and \eqref{b3}, respectively. Let  $\Gamma$ be defined by \eqref{ga} with respect to some $(c_1, \ldots, c_{2m})\in Q_{(d_{j, k})  }\cap I_3$. Then given $\zeta_j \in \mathbb{C}$ for each $j \in \Gamma$, there exists a $C^{2m}(B_{r_0} )$ radial solution
to the following system  
 \begin{equation*}
	\left \{
	\begin{aligned}
		&\Delta^{m}  u   + \sum_{\substack{j,k\ge 0\\0\le j+2k\le 2m-1}} \frac{d_{j, k}}{r^{2m-2k}} (x\cdot \nabla)^j\Delta^k u  = f(r, u, \partial_r  u, \dots, \partial_r^{(2m-1)} u) \quad \text{on } \ B_{r_0}  \setminus \{0\};\\
        &\partial_r^{j} u(0)= \zeta_j, \quad \quad j\in  \Gamma;\\
		&\partial_r^{j} u (0)= 0, \quad \quad j\in  \{0, \ldots, 2m-1\}\setminus \Gamma,
	\end{aligned}
	\right.
\end{equation*}
	where the vector-valued function $f$ is continuous with respect to all its arguments. 	  If  in addition  $f$ is locally Lipschitz continuous in the variables $(u, \partial_r  u, \dots, \partial_r^{(2m-1)} u)$, then this solution is the unique $C^{2m}$ radial solution to the problem.

\end{cor}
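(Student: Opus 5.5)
The plan is to mirror the proof of Theorem \ref{pe}, with Lemma \ref{ele3} supplying the reduction to an iterated weighted operator. Fix $(c_1,\ldots,c_{2m})\in Q_{(d_{j,k})}\cap I_3$, so that \eqref{rse} holds and \eqref{bjm} is satisfied. For a radial $u=\phi(r)$, the left-hand side of the system then coincides with $\Delta_{(c_{2m-1},c_{2m})}\cdots\Delta_{(c_1,c_2)}u$ on $B_{r_0}\setminus\{0\}$. This rests on two facts. First, $\Delta^k u$ is radial whenever $u$ is. Second, $(x\cdot\nabla)^j$ acts on radial functions as $(r\partial_r)^j$. Together these mean that the whole operator acting on a radial function equals its radial component, namely the Euler-type operator
$$\partial_r^{2m}+\frac{a_1}{r}\partial_r^{2m-1}+\cdots+\frac{a_{2m}}{r^{2m}}.$$
By the definition of $Q_{(d_{j,k})}$, this Euler operator agrees with $(\Delta_{(c_{2m-1},c_{2m})}\cdots\Delta_{(c_1,c_2)})^{\mathrm{rad}}$.

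Consequently, the radial initial value problem stated in the corollary is exactly the problem of Theorem \ref{pe} for this choice of parameters, with the same set $\Gamma$ from \eqref{ga} and the same nonlinearity $f(r,u,\partial_r u,\ldots,\partial_r^{2m-1}u)$. Applying Theorem \ref{pe} gives a $C^{2m}$ radial solution on $B_{r_0}$ with the prescribed jets $\zeta_j$ for $j\in\Gamma$ and vanishing jets otherwise. By the identification just described, this function solves the original system.

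For uniqueness under the local Lipschitz assumption, let $u$ and $v$ be two $C^{2m}$ radial solutions of the system. Both also solve the iterated $\Delta_{(c)}$ system, again by the identification. The uniqueness clause of Theorem \ref{pe} then gives $u\equiv v$. Equivalently, one can write $w=u-v$ with profile $\psi$ and derive
$$|\mathcal D_{b_{2m}}\cdots\mathcal D_{b_1}\psi|\le L\sum_{j}|\psi^{(j)}|,$$
where $(b_1,\ldots,b_{2m})$ is chosen via \eqref{b2}. Theorem \ref{gron} applies here because \eqref{bjm} implies \eqref{bjN}, as verified in the proof of Theorem \ref{lpdeg}.

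The only step requiring care is the first one: checking that for radial $u$ the full operator in \eqref{h} involves no non-radial contributions. The parameters are chosen so that \eqref{rse} holds only at the level of radial components, so one must confirm the operator maps radial functions to radial functions and hence equals its radial component there. This follows because $\Delta$ and $x\cdot\nabla$ both commute with rotations, so each term of the operator preserves radial symmetry.
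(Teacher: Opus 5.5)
Your proposal is correct and follows the same route as the paper. The paper likewise uses Lemma \ref{ele3} to obtain parameters $(c_1,\ldots,c_{2m})$ satisfying \eqref{rse}, and then applies Theorem \ref{pe}, whose uniqueness part rests on Theorem \ref{iodea} and hence on Theorem \ref{gron}. Your explicit check that the operator in \eqref{h} commutes with rotations, so that on radial functions it coincides with its radial component, is a step the paper leaves implicit.
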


\vspace{.2in}

\end{document}